\newif\iffinal
  \newcounter{kconstant}
  \newcommand{\nck}[1]{\refstepcounter{kconstant}\label{#1}}
  \newcommand{\uck}[1]{k_{\ref{#1}}}
\begin{document}

\title{Random Walks on Dynamical Random Environments with Non-Uniform Mixing}
\date{\today}
\author{Oriane Blondel\thanks{Univ Lyon, CNRS, Universit\'e Claude Bernard Lyon 1, UMR 5208, Institut Camille Jordan, F--69622, Villeurbanne, France.}
  \and
  Marcelo R. Hil\'ario \thanks{UFMG, Dep.\ de Matem\'atica, Av.\ Ant\^onio Carlos 6627, CP 702, 31270-901, Belo Horizonte, MG - Brazil.}
  \and
  Augusto Teixeira \thanks{IMPA, Estrada Dona Castorina 110, 22460-320 Rio de Janeiro, RJ - Brazil}}

\maketitle

\begin{abstract}
In this paper we study random walks on dynamical random environments in $1 + 1$ dimensions.
Assuming that the environment is invariant under space-time shifts and fulfills a mild mixing hypothesis, we establish a law of large numbers and a concentration inequality around the asymptotic speed.
The mixing hypothesis imposes a polynomial decay rate of covariances on the environment with sufficiently high exponent but does not impose uniform mixing. 
Examples of environments for which our methods apply include the contact process and Markovian environments with a positive spectral gap, such as the East model.
For the East model we also obtain that the distinguished zero satisfies a Law of Large Numbers with strictly positive speed.
\end{abstract}

\bigskip

Mathematics Subject Classification (2010): 60K35, 82B43, 60G55

\renewcommand\footnotemark{}
\renewcommand\footnoterule{}
\let\thefootnote\relax\footnotetext{{\bf Keywords:} Random walks, dynamic random environments, strong law of large numbers, renormalization}

\section{Introduction}

The research on random walks in random media finds its motivation in various questions, ranging from ecology, chemistry, particle physics and pure mathematics.

Within probability, the study of such processes started with the very interesting case of a random walk on a one-dimensional static media which has already been throughly studied, see for instance \cite{MR0380998, MR657919, MR0362503}.
Understanding the higher dimensional cases remains a great challenge despite of important progress obtained in that direction as well.
We refer to \cite{MR2198849} for a survey on the subject.

Besides the case of static media, substantial effort has been dedicated to the investigation of random walks on dynamical random environments.
The main results in this field depend a great deal on the specific dynamics under consideration as we discuss below.

There are several works concerned with quenched invariance principles for random walks on environments that change independently at each time step, see for instance \cite{MR1477651}.
Another important class of examples considered in the literature is the so-called strong-mixing environments, see \cite{comets2004, MR2191225, MR2786643}.
For random walks defined on this type of environment and under certain conditions such as, the cone-mixing property, it is possible to extract an approximate renewal structure yielding a Law of Large Numbers (LLN) and, in some cases, a Central Limit Theorem.
It is worth noticing that these conditions are usually quite restrictive, meaning that they require that the environment mixes either very fast or uniformly on the initial configuration.
For this reason, the techniques developed for the study of these processes do not seem to apply easily for most of the examples of random environments that we present in this paper.

Another technique in the field consists in analyzing the random environment as viewed from the walker, see \cite{redig2013, MR3580037}.
This technique provides proofs of Law of Large Numbers and Central Limit Theorems under somehow weaker mixing conditions like polynomial mixing rate.
However the mixing should be essentially uniform over the initial configuration.

\medskip
Cases in which the random environment is not assumed to mix uniformly have also been studied.
In \cite{MR2123642} and \cite{Avena2017} instances of such environments presenting a positive spectral gap  where considered.
However, these studies are perturbative in the sense that the environment seen from the random walker needs to be close to a process which has the same invariant measure as the environment itself. 
This includes cases of weak interaction between the walker and the environment which had been previously studied in various contexts.

Another example of environment that present non-uniform mixing is the contact process.
Laws of Large Numbers and Central Limit Theorems where proved for random walks on this environment in \cite{denhollander2014, Bethuelsen2018, mountford2015}.
Again, in these papers, the techniques used seem to be reasonably dependent on the specific environment under consideration.

A challenging type of environments is given by the conservative particle systems for conservation of particles implies in poor mixing rates which complicates the application of standard methods commonly used in the strong mixing case.
Examples of these environments include the exclusion process \cite{MR3108811, MR3407222} and Poissonian fields of random walks \cite{dHKS14, zbMATH06514478, BHSST1, BHSST2}.
For random walkers on these environments Laws of Large Numbers and  Central Limit Theorems can be achieved under the hypothesis of strong drift or for some perturbative regimes.
Using similar methods the evolution of the front of an infection process on a similar environment can be studied \cite{MR3531680}.
In each of the papers cited in the present paragraph, the proofs fit very specifically to the model in question, and do not seem to be easily adapted to other environments.

In the present paper we develop a robust framework that can be applied in a rather simple manner to prove a law of large numbers for random walks on several one-dimensional dynamical random environments.
Roughly speaking, for a broad class of such processes, one just has to check that the environment satisfies a simple space-time mixing inequality in order to be able to apply our results.
As an example of the applicability of our techniques, in Section \ref{s:applications} the validity of this condition is verified in a simple way for several important environments.

We consider random walks evolving  on dynamical random environments in dimension one sometimes called $1 + 1$ to account for the time dimension as well.
We will assume that the environment is invariant with respect to space-time shifts and moreover that it satisfies the following mixing property:
\begin{display}
  \label{e:decouple}
  for any pair of space-time boxes $B_1$ and $B_2$, with side length $5r$ and mutual distance at least $r$ and any pair of events $A_1$ and $A_2$ that only depend on the random environment inside $B_1$ and $B_2$ respectively, we have $\Cov(A_1, A_2) \leq c  r^{-\alpha}$.
\end{display}
See Assumption~\ref{hyp:invariantenv} and Definition~\ref{d:decouple} below for the precise statements of our assumptions.
Above, $\Cov$ stands for the covariance with respect to the law of the environment and $c$ is just a positive constant.
Note that this condition implies ergodicity of the process under time shifts, however it does not imply uniformity of the mixing.
As it will become clear below, our methods will work as soon as the exponent $\alpha$ appearing in \eqref{e:decouple} is sufficiently large ($\alpha > 8$ is enough for proving a LLN).

Suppose that on top of a translation invariant environment satisfying \eqref{e:decouple} we start a continuous-time, nearest-neighbor random walk whose jumps depend locally on the state of the environment immediately before their occurrence.
For now, let us assume that the jumping times of the random walk are given by a Poisson process with unitary rate, which is independent of the underlying environment, although this condition will be relaxed in Section~\ref{ss:randomwalker}.
Our main result, Theorem~\ref{t:main}, states that there exists $v \in [-1, 1]$ such that
\begin{equation}
  \lim_{t \to \infty} \frac{X_t}{t} = v, \qquad \text{almost surely}.
\end{equation}
Moreover we obtain some concentration bounds for $X_t/t$ around $v$, see \eqref{e:ldb}.

Another interesting result we present in this paper concerns random walks that can only move to one side on the set of the integers.
In this situation, we show that under the condition \eqref{e:decouple} with $\alpha > 8.5$,
\begin{display}
\label{e:speed_front}
  if the random walker can only jump to the right and has a positive probability of jumping within one time unit then $v$ is strictly positive.
\end{display}
See Theorem~\ref{t:positive} for a more detailed statement.

Besides being non-perturbative, our methods mainly require that the  environment satisfy a mixing hypothesis which is not as restrictive as the usual strong mixing or uniform mixing conditions previously considered, e.g.\ the cone-mixing condition \cite{MR2786643}.
To exemplify the generality of our methods, in Section~\ref{s:applications} we provide several examples of processes that fall into our hypotheses.
These include the contact process and particle systems with positive spectral gap, such as independent spin-flip dynamics, the East model \cite{eastphys} and the FA-1f model \cite{FA1}.
We also present an application for random walkers evolving on a set of renewal chains, as introduced in \cite{zbMATH06514478}.

For some of the models above, Laws of Large Numbers and sometimes Central Limit Theorems have been proved before in the literature by several different methods combining renormalisation, regeneration times and analysis of the random environment as seem from the random walk.
However, as far as we are aware, the results for the East model and the FA-1f model are new.
An interesting consequence of \eqref{e:speed_front} is that the distinguished zero of the East model satisfies a Law of Large Numbers with strictly positive speed, see Section~\ref{ss:east}.

\begin{remark}
  We believe that the techniques presented in this article should be naturally adapted to the discrete time framework.
  However, the hypothesis that two random walk trajectories starting at different points in space cannot cross each other is vital in our argumentation, see \eqref{e:monotone}.
\end{remark}

\begin{remark}
Our methods do not provide a Central Limit Theorem for the random walk.
It seems that some new ideas will be needed in order to achieve this goal.
We leave as an interesting future question to establish limiting results for the fluctuations of the walker around its expected position under general mixing hypotheses on the environment.
\end{remark}

\begin{remark}
One may be tempted to think that there might exist a simpler proof of the Law of Large Numbers stated in our main theorem using exclusively some type of ergodicity argument.
However, we would like to draw the reader's attention to an example presented in Section~\ref{s:example} of random walk naturally defined on an ergodic space-time environment that does not satisfy a LLN.
\end{remark}

\section{Mathematical setting and main results}
\label{s:notation}

Throughout the text, $c$ and $c'$ will denote positive constants whose values are allowed to change from line to line.
All constants may depend on the random environment (in particular on $\alpha$) and on the evolution rules of the random walk.
Further dependence will be made explicit, for example, we write $c = c(\varepsilon)$ to refer to a constant that depends on $\varepsilon$ and possibly on the law of the random environment and the evolution rules of the random walk.
Numbered constants such as $c_0, c_1, \ldots$ and $k_0, k_1, \ldots$ stand for positive numbers whose value is fixed at their first appearance in the text.

As mentioned before, we consider continuous-time random walks that evolve in discrete space $\mathbb{Z}$.
Its position at a given time is an element of the set 
\[
\mathbb{L} := \mathbb{Z} \times \mathbb{R}_+.
\]
The evolution of the random walk depends locally on the value taken by the dynamic environment around its current position.
That is, the distribution of each jump depends on the environment restricted to a bounded region of the environment around the position of the walker just before the jump.
The kind of environments that we consider are described in Section \ref{ss:environment} and the jumping rules will be given in Section \ref{ss:randomwalker}.

\subsection{Environment}
\label{ss:environment}

In this paper the environment will be given by random functions $(x,t)\mapsto\eta_t(x)$, for $x \in \mathbb{Z}$ and $t \in \mathbb{R}_+$, where $\eta_t(x)$ takes value in a countable state space $S$ and represents the state of site $x$ at time $t$.
Although our techniques apply in more general context, for most of the examples we consider, $S$ will be either $\{0, 1\}$ (such as in the case of the contact process and the East model -- see, Sections \ref{ss:contact} and \ref{ss:east} respectively), $\{-1, 1\}$ (for the Glauber dynamics of the Ising model -- see, Section \ref{ss:spectral}) or the set of natural numbers $\mathbb{N}$ (as in the example of the renewal environment -- see Section \ref{ss:renewal}).
We denote $\eta_t := (\eta_t(x))_{x\in\mathbb{Z}}$ the value taken by the environment at time $t$.
This is an element in the space $S^{\mathbb{Z}}$ which we endow with the product topology.
We also denote $\eta=(\eta_t)_{t \in \mathbb{R}_+}$, which will be called the random environment.

\begin{assumption}\label{hyp:invariantenv}We assume that the trajectories $t \mapsto \eta_t$ belong to $\mathcal{D}(\mathbb{R}_+, S^\mathbb{Z})$,  the space of all c\`adl\`ag functions from $\mathbb{R}_+$ to $S^\mathbb{Z}$.
We also assume that the random environment $\eta$ is invariant with respect to translations by elements of $\mathbb{L}$:
\begin{display}
  \label{e:invariantenv}
   for every $(z, s) \in \mathbb{L}$, the two processes $\big( \eta_t(x) \big)_{(x, t) \in \mathbb{L}}$\\ and $\big( \eta_{s+t}(z + x) \big)_{(x, t) \in \mathbb{L}}$ have the same distribution.
\end{display}
\end{assumption}
Fixing $z = 0$ and varying $s$ over $\mathbb{R}_+$ in Assumption~\ref{hyp:invariantenv} implies that  $\eta$ is stationary in time.

A box is defined to be any subset of $\mathbb{R}^2$ of the type $[a,b) \times [c,d)$.
For such a box, we call $b-a$ and $d-c$ its horizontal and vertical side lengths, respectively.
Given two boxes $B_i := [a_i,b_i) \times [c_i,d_i)$, $i = 1, 2$, with $c_2 > d_1$ we define their time-distance $d(B_1,B_2):=c_2 - d_1$.

Let $P$ be the law of the environment and $\Cov$ the covariance with respect to $P$.
The main assumption that we impose on our random environment is that it satisfies the following decoupling hypothesis.

\nc{c:decouple}
\begin{definition}[Decoupling inequality]
  \label{d:envdecouple}
  For $\uc{c:decouple}, \alpha > 0$, we say that $P$ satisfies the decoupling inequality $\mathscr{D}_{\mathrm{env}}(\uc{c:decouple}, \alpha)$ if the following holds.
For every $r \geq 1$, every pair of boxes $B_1, B_2 \subseteq \mathbb{R}^2$ having both side lengths at most equal to $5r$ and time-distance $d(B_1,B_2)\geq r$ and for any pair of functions $f_1, f_2:\Omega \to [0, 1]$ satisfying
  \begin{equation}
    f_i \in \sigma \big( \eta_t(x), \text{ with $(x, t) \in B_i \cap \mathbb{L}$} \big), \text{ for $i = 1, 2$},
  \end{equation}
we have
  \begin{equation}
    \Cov(f_1, f_2) \leq \uc{c:decouple} r^{-\alpha}.
  \end{equation}
\end{definition}

In Section~\ref{s:applications} we will present several models that satisfy the above decoupling condition, including the supercritical contact process (Section \ref{ss:contact}), some kinds of independent renewal chains (Section \ref{ss:renewal}) and Markov processes with positive spectral gap (Section \ref{ss:spectral}).

\subsection{Random walker}\label{ss:randomwalker}

On top of the dynamic random environment $\eta$, we  define a continuous-time random walker in one-dimension whose evolution  depends locally on $\eta$.
In this section we define these evolution rules and give the main assumptions we require on the joint law of the environment and of the random walker.

Given a starting point $y = (x, s) \in \mathbb{L}$ and  $t \in \mathbb{R}_+$, we will represent by $Y^y_t \in \mathbb{L}$ the space-time position of the walker after time $t$ has elapsed.
Let $\pi_1$ and $\pi_2$ denote the canonical orthogonal projections of $\mathbb{R}^2$ onto the first and the second coordinates, respectively.
We write $X^y_t:=\pi_1(Y^y_t) \in \mathbb{Z}$ for the spatial position of the random walker at time $t$.
Notice that $\pi_2(Y^y_t) = \pi_2(y) + t = s+t$.
We will sometimes write $Y_t^o$ (resp.\ $X_t^o$) for the space-time (space) position of the random walk starting at $o:=(0,0)$. 

We impose that the random walk trajectories $t\mapsto Y^y_t$ belong almost surely to the space
\begin{equation}
  \mathcal{D}_{\rm n.n.}([0, \infty), \mathbb{L}) := \bigg\{
  \begin{aligned}
    & \gamma:[0, \infty) \to \mathbb{L} \text{ c\`adl\`ag} \colon
     \big| \pi_1 \big( \gamma(t) \big) - \pi_1 \big( \gamma(t_-) \big) \big| \leq 1\\
    & \text{and }  \pi_2 \big( \gamma(t+s) \big) - \pi_2 \big( \gamma(t) \big) = s \text{ for $t,s  \in [0, \infty)$}
  \end{aligned}
  \bigg\},
\end{equation}
where $\gamma(t_-):=\lim_{s\nearrow t} \gamma(s)$.
In particular, this implies that the random walk performs only nearest neighbor jumps almost surely.
For every pair $0<T'<T''$ we define the set of paths $ \mathcal{D}_{\rm n.n.}([T', T''], \mathbb{L})$ from time $T'$ to $T''$ in an analogous way.
When $\gamma$ is an element in $\mathcal{D}_{\rm n.n.}([T',T''],\mathbb{L})$ we say that $\gamma$ has length $T''-T'$.

Let us now define the evolution of the random walker.
We start by introducing its allowed jumping times.
For each $x\in\mathbb{Z}$, let $(T^x_i)_{i = 1}^{\infty}$ be a random increasing collection of positive real numbers such that 
\begin{display}
$\{T_i^x,i=1,\ldots,\infty\}$ and $\{T_i^{x+1},i=1,\ldots,\infty\}$ are disjoint for all $x\in\mathbb{Z}$.
\end{display}  
For instance, one can keep in mind the example where the $(T^x_i)_{i = 1}^{\infty}$ are independent Poisson processes which are also (mutually) independent from the environment $\eta$.
We chose to work in a greater generality in order to include interesting applications.

The pairs $(x,T^x_i)_{x,i}$ will mark the space-time locations at which the random walker will be allowed to jump. 
This is encoded in the following definition.
\begin{definition}
  \label{d:allowed}
  Given a collection of jump times $(T_i^x)_{x,i}$, we say that an element $\gamma \in \mathcal{D}_{\rm n.n.}([0, \infty), \mathbb{L})$ is an \emph{allowed path} if all of its discontinuities are located at space-time points of the type $(x,T^x_i)_{x\in \mathbb{Z}}$.
  More precisely,
  \begin{display}
    if for some $t \in [0, \infty)$, $\gamma(t) = (x, s)$, then $\gamma({t + r}) = (x, s + r)$ for every $r < \underset{i}{\min} \{T^x_i - s \colon T^x_i > s\}$.
  \end{display}
  We define allowed paths in $\mathcal{D}_{\rm n.n.}([T'',T'], \mathbb{L})$ analogously.
\end{definition}

Besides the jump times $T_i^x$, we also fix independent uniform random variables $U^x_i \in [0, 1]$, also independent from all the rest, that provide the extra randomness that the random walker may use to determine its next jump.
As it will become clear below, this is done in order to encode the whole randomness of the walker, so that conditional on $\eta$, on the $T^x_i$'s and on the $U^x_i$'s, for each initial position, the walker follows a deterministic allowed path in $\mathcal{D}_{\rm n.n.}([0,\infty),\mathbb{L})$. 
For the rest of this paper we will denote by $\mathbb{P}$ the joint law of $\eta$, $(T_i^x)_{x,i}$ and $(U_i^x)_{x,i}$.

Let us now fix a positive integer $\ell$ and a function
\begin{equation}
\label{e:local_g}
g:S^{\{-\ell,\ldots,\ell\}} \times [0, 1] \to \{-1, 0, 1\},
\end{equation} which will be used to define the jumps of the random walker.
Roughly speaking, when the walker lies at site $x \in \mathbb{Z}$ and one of the arrival times $T^x_i$ comes up, the walker will jump to site $x + g(\eta_{T^x_i}(x-\ell),\ldots,\eta_{T^x_i}(x+\ell), U^x_i)$.

In a more precise way, we define the trajectory of the walker starting at $y$ to be the random element $(Y^y_t)_{t\in [0,\infty)}$ on $\mathcal{D}_{\rm n.n.}([0, \infty), \mathbb{L})$ which is completely determined by the following conditions:
\begin{enumerate} [\quad a)]
\item $Y^y_0 = y$ almost surely.
\item $(Y^y_t)_{t\in [0,\infty)}$ is an allowed path almost surely.
\item ``The jumps are determined by $g$''. That is,
  \begin{display}
    if $T^x_i = t$ and $Y^y_{t-} = (x, t)$, then $Y^y_{t} = \Big( x + g\big( \eta_t(x-\ell),\ldots, \eta_t(x+\ell), U^x_i \big), t \Big).$
  \end{display}
\end{enumerate}

The fact that the walker evolves in an one-dimensional environment and that it only performs nearest-neighbor jumps  together with the fact that the set of allowed jumping times for neighboring sites are disjoint almost surely implies a very important monotonicity property:
\begin{display}
  \label{e:monotone}
  if $x \leq x' \in \mathbb{Z}$ and $s \in \mathbb{R}_+$, then $X^{(x, s)}_t \leq X^{(x', s)}_t$ for every $t \geq 0$.
\end{display}
We are going to make strong use of this property for carrying on our proof.
This poses an obstacle for the task of extending our results for random walks with long-range jumps or in higher dimensions.

We now need to extend Assumption~\ref{hyp:invariantenv} and Definition~\ref{d:envdecouple} to the joint distribution $\mathbb{P}$ of the environment and the jump times $T_i^x$.

\begin{assumption}\label{hyp:invariant}
We assume that $\mathbb{P}$ is invariant with respect to translations by elements of $\mathbb{L}$:
\begin{display}
for every $(z, s) \in \mathbb{L}$, the two processes $\big(\big( \eta_t(x) \big)_{(x, t) \in \mathbb{L}},(T_i^x)_{x\in\mathbb{Z},i\geq 1}\big)$ and $\big(\big( \eta_{s+t}(z + x) \big)_{(x, t) \in \mathbb{L}},(T_i^{z+x}-s)_{x\in\mathbb{Z},i\geq 1\colon T_i^{z+x}>s}\big)$ have the same distribution.
\end{display}
\end{assumption}

\begin{definition}\label{d:decouple}
  For $\uc{c:decouple}, \alpha > 0$, we say that $\mathbb{P}$ satisfies the decoupling inequality $\mathscr{D}(\uc{c:decouple}, \alpha)$ if the following holds.
  For every $r \geq 1$, every pair of boxes $B_1, B_2 \subseteq \mathbb{R}^2$ having both side lengths at most equal to $5r$ and time-distance $d(B_1,B_2)\geq r$ and for any pair of functions $f_1, f_2:\Omega \to [0, 1]$ satisfying
  \begin{equation}
    f_i \in \sigma \big(\{ \eta_t(x);  (x, t) \in B_i \cap \mathbb{L}\}\cup\{(x, T_i^x); (x,T_i^x)\in B_i\cap \mathbb{L}\}\big), \text{ for $i = 1, 2$},
  \end{equation}
  we have
  \begin{equation}
    \mathbb{C}\mathrm{ov}(f_1,f_2) \leq \uc{c:decouple} r^{-\alpha}.
  \end{equation}
\end{definition}
Here $\mathbb{C}\mathrm{ov}$ stands for the covariance with respect to $\mathbb{P}$.
We also need a priori bounds on the speed of the random walker. 
For $v\in\mathbb{R}$, let
\begin{align}
A_T(v)=&\bigg\{
\begin{aligned}
 \text{there }& \text{exists $\gamma$ allowed path in $\mathcal{D}_{\rm n.n.}([0,T], \mathbb{L})$ such that}\\
&\text{$\gamma(0)\in[0,T)\times \{0\}$ and $\gamma(T)-\gamma(0)\geq vT$}
\end{aligned}
\bigg\},\\
\tilde{A}_T(v)=&\bigg\{
\begin{aligned}
\text{there }& \text{exists $\gamma$ allowed path in $\mathcal{D}_{\rm n.n.}([0,T], \mathbb{L})$ such that}\\
&\text{$\gamma(0)\in[0,T)\times \{0\}$ and $\gamma(T)-\gamma(0)\leq vT$}
\end{aligned}
\bigg\}.
\end{align}

\begin{assumption}\label{hyp:v+v-}
We assume that, for all $v>1$,
\begin{align}
\liminf_{T\to\infty}\mathbb{P}(A_T(v))=0
\,\,\,\text{ and }\,\,\, \liminf_{T\to\infty}\mathbb{P}(\tilde{A}_T(-v))=0.
\end{align}
\end{assumption}

We also need quantitative bounds on the probability of larger deviations above the maximum speed. Define
\begin{equation}
  F_{T} = \bigg\{
  \begin{array}{c}
    \text{$\exists$ allowed path $\gamma\in \mathcal{D}_{\rm n.n.}([0,T], \mathbb{L})$ with $\gamma(0)=0$ and a time}\\
    \text{$s \in [0,T]$ such that $[\gamma(s)-\ell,\gamma(s)+\ell]\nsubseteq [-2T,2T]\times[0,T]$}
  \end{array}
  \bigg\}.
\end{equation}
\begin{assumption}\label{hyp:FT}
There exists $c>0$ such that
\begin{equation}\label{e:FT}
\mathbb{P}(F_T)\leq c^{-1}e^{-cT}.
\end{equation}
\end{assumption}

\begin{remark}\label{rem:Poissontimes}
Note that Assumptions \ref{hyp:v+v-} and \ref{hyp:FT} should follow easily in most cases from a simple large deviations bound.
For instance, they are satisfied when the $(T_i^x)_{i\geq 1}$ are i.i.d.\ Poisson point processes of intensity $1$. 
If additionally they are independent of the environment and the environment law $P$ satisfies Assumption~\ref{hyp:invariantenv} (resp.\ $\mathscr{D}_{\mathrm{env}}(\uc{c:decouple},\alpha)$), then $\mathbb{P}$ satisfies Assumption~\ref{hyp:invariant} (resp.\ $\mathscr{D}(\uc{c:decouple},\alpha)$). 
\end{remark}

\subsection{Main theorems}
\label{s:main}

Our main result is the following law of large numbers and deviation bound for the random walker:

\begin{theorem}
  \label{t:main}
  Suppose Assumptions~\ref{hyp:invariant}, \ref{hyp:v+v-} and \ref{hyp:FT} are satisfied, as well as the decoupling property $\mathscr{D}(\uc{c:decouple}, \alpha)$ for some $\alpha > 8$, then there exists $v \in [-1,1]$ such that
  \begin{equation}
   \label{e:lln}
    \lim_{t \to \infty} \frac{X^o_t}{t} = v \qquad \mathbb{P}-\text{ a.s.}
  \end{equation}
  Moreover, for every $\epsilon >0$,
  \begin{equation}
   \label{e:ldb}
    \mathbb{P} \Big[ \Big| \frac{X^o_t}{t} - v \Big| \geq \epsilon \Big] \leq t^{-\alpha/4},
  \end{equation}
  for every $t$ large enough, depending on $\epsilon$.
\end{theorem}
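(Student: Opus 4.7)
The plan is to prove the theorem by a multiscale renormalization argument that converts the qualitative hypothesis of Assumption \ref{hyp:v+v-} into polynomial deviation bounds, followed by a Borel--Cantelli step and a matching argument to identify the asymptotic velocity.

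To begin, I would set
\[
\bar v = \inf\bigl\{v : \liminf_{T \to \infty}\mathbb{P}(A_T(v)) = 0\bigr\}, \qquad \underline v = \sup\bigl\{v : \liminf_{T \to \infty}\mathbb{P}(\tilde A_T(v)) = 0\bigr\},
\]
both of which lie in $[-1,1]$ by Assumption \ref{hyp:v+v-}. The core estimate to be proved is that for every $v > \bar v$ one has $\mathbb{P}(A_T(v)) \leq T^{-\alpha/4}$ for all sufficiently large $T$, with a symmetric bound for $\tilde A_T(-v)$ whenever $v > -\underline v$.

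The polynomial decay will be obtained via renormalization along geometric scales $L_{k+1} = 2 L_k$ combined with a decreasing sequence of target speeds $v_k \downarrow v$ chosen so that $v_0 \in (\bar v, v)$ and $\sum_k (v_k - v_{k+1}) < \infty$, e.g.\ $v_k - v_{k+1} \asymp L_k^{-1/2}$. If some allowed path attains displacement $\geq v_{k+1} L_{k+1}$ over time $L_{k+1}$, then by the nearest-neighbor constraint its position at an intermediate time is localized in an interval of size $O(L_k)$; covering this interval by $O(L_{k+1}/L_k)$ sub-intervals and inserting a temporal gap of length $\geq L_k$ in the middle, one decomposes $A_{L_{k+1}}(v_{k+1})$ into a union over $O(L_k^2)$ choices of intersections of two events, each supported in a box with side length at most $5 L_k$ and mutual time-distance at least $L_k$. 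The speed slack $v_k - v_{k+1}$ absorbs the uncertainty introduced by the temporal gap. Applying the decoupling hypothesis $\mathscr{D}(\uc{c:decouple}, \alpha)$ to every pair yields a recursion of the form
\[
\mathbb{P}(A_{L_{k+1}}(v_{k+1})) \leq C\, L_k^{2}\, \mathbb{P}(A_{L_k}(v_k))^{2} + C\, L_k^{2-\alpha}.
\]
Triggering the induction at a scale $L_0$ large enough for $\mathbb{P}(A_{L_0}(v_0))$ to be sufficiently small (possible by the definition of $\bar v$) and iterating produces $\mathbb{P}(A_{L_k}(v)) \leq L_k^{-\beta}$ for some $\beta$ close to $\alpha/2 - 2$. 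Interpolating to general $T$ by the ballistic bound $|X_{t+1}^o - X_t^o| \leq 1$ gives the advertised exponent $\alpha/4$; the hypothesis $\alpha > 8$ leaves exactly enough room to accommodate the polynomial prefactors, the interpolation, and the contribution from Assumption \ref{hyp:FT}. An entirely analogous argument handles $\tilde A_T$.

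Since the random walker is itself an allowed path, one has $\{X_t^o \geq vt\} \subseteq A_t(v)$ and $\{X_t^o \leq v' t\} \subseteq \tilde A_t(v')$, so the same polynomial bound transfers to the walker. Summability of $\sum_n n^{-\alpha/4}$ and Borel--Cantelli then give $\limsup_t X_t^o/t \leq \bar v$ and $\liminf_t X_t^o/t \geq \underline v$ almost surely, and \eqref{e:ldb} is already a direct consequence of the two-sided deviation bounds. The last step, which I expect to be the most delicate part of the argument, is to show $\bar v = \underline v$. Here I would exploit the monotonicity \eqref{e:monotone} together with translation invariance and the ergodicity under time-shifts implied by the decoupling: if $\bar v > \underline v$, then along a common subsequence of times one could find, with non-negligible probability, both a fast and a slow allowed path whose coexistence, after a shift of the slow one by a suitable translation, would force two nearest-neighbor paths starting at nearby points to swap their spatial ordering, contradicting \eqref{e:monotone}. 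The main technical obstacles I anticipate are the careful book-keeping of the combinatorial prefactors and velocity slacks in the multiscale recursion, which is what pins down the exponent $\alpha > 8$, and the matching step $\bar v = \underline v$, where monotonicity must be combined with ergodicity in a quantitative way.
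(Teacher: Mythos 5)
Your renormalization scheme for the deviation bounds is essentially the paper's argument (Sections 4): upper/lower speeds defined through $\liminf$ of the box events, a multiscale recursion with shrinking velocity slacks absorbed by the decoupling error, seeded at a large scale via the definition of $\bar v$, then interpolated to general times. The cosmetic difference (geometric scales $L_{k+1}=2L_k$ versus the paper's $L_{k+1}\approx L_k^{5/4}$) is harmless, and the Borel--Cantelli step is fine. The problem is the last step, $\bar v=\underline v$, which you correctly identify as the delicate part but for which your proposed mechanism does not work.

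Concretely: the coexistence of a fast allowed path and a slow allowed path over the same time window does \emph{not} contradict the monotonicity \eqref{e:monotone}. If the fast path starts at $y_1$ and the slow one at $y_2$ with $\pi_1(y_1)\leq\pi_1(y_2)$, monotonicity only yields $(\bar v-\underline v)H\lesssim \pi_1(y_2)-\pi_1(y_1)$, and since the starting points range over an interval of length $H$ (this width cannot be shrunk below a constant multiple of $H$ without losing the lower bound on the probabilities coming from the definitions of $\bar v$ and $\underline v$), this gives no contradiction unless $\bar v-\underline v>1$. Worse, the definitions of $\bar v$ and $\underline v$ only guarantee that each of the two events has probability bounded below by some possibly tiny constant $c>0$ along a subsequence of times; nothing forces them to occur \emph{simultaneously}, and ergodicity under time shifts does not repair this because the environment seen from the walker is not known to be stationary. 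This is exactly the obstruction that the paper's Sections 5--6 are built to overcome: one first shows that a ``trap'' (a nearby slow path) occurs with a uniformly positive probability, then boosts this to probability $1-O(r^{-\alpha})$ by looking for a trap anywhere along a length-$r$ segment of slope $v_+$ (``threatened points''), then runs a second renormalization to show that \emph{every} allowed path spends a positive density of time at threatened points, and finally uses monotonicity locally (the walker, sitting just to the left of a trap, cannot overtake the slow path emanating from it without first exceeding speed $v_+$ on a sub-block, which the deviation bound forbids) to accumulate a macroscopic delay that contradicts the definition of $v_+$. Without some such probability-boosting and density argument, the matching step in your proposal remains a genuine gap.
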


The next theorem gives some conditions under which we can assure that the speed of the random walker is strictly positive.
This can be useful in several contexts as for instance when we study the distinguished zero of the East model in Subsection~\ref{ss:east}.

\begin{theorem}
  \label{t:positive}
 Suppose Assumptions~\ref{hyp:invariant}, \ref{hyp:v+v-} and \ref{hyp:FT} are satisfied, as well as the decoupling property $\mathscr{D}(\uc{c:decouple}, \alpha)$ for some $\alpha > 8.5$.
  Assume also that $g(\eta_{-\ell},\ldots,\eta_\ell, u) \in \{0, 1\}$ for every $(\eta_{-\ell},\ldots,\eta_\ell, u)\in S^{\{-\ell,\ldots,\ell\}}\times [0,1]$ and that
  \begin{equation}
    \label{e:one_jump}
    \mathbb{P} \big[ X^{o}_1 \geq 1 \big] > 0,
  \end{equation}
 Then both conclusions of Theorem~\ref{t:main} hold but in addition we conclude that the speed $v$ is strictly positive.
\end{theorem}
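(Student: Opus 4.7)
By Theorem~\ref{t:main} the speed $v\in[-1,1]$ exists and $X^o_t/t\to v$ a.s.; since $g\in\{0,1\}$, the trajectory $t\mapsto X^o_t$ is non-decreasing, so $v\ge 0$, and the whole content of the theorem is to rule out $v=0$. I propose to do so by a first-passage time argument: set $\tau_n:=\inf\{t\ge 0\colon X^o_t\ge n\}$ (so $X^o_{\tau_n}=n$) and aim to prove $\mathbb{E}[\tau_n]\le Cn$ for some finite $C$. Combining this with the LLN of Theorem~\ref{t:main} (via $n/\tau_n\le X^o_{\tau_n}/\tau_n\to v$ and Fatou) forces $v\ge 1/C>0$, yielding the desired contradiction.

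The main quantitative input is a polynomial tail $\mathbb{P}(\tau_1>t)\le c\,t^{-\alpha}$, built from the one-jump hypothesis by a self-improving recursion. Denote by $\mathcal{A}_I$ the event ``no right-jump at position $0$ during the time interval $I$''; it is measurable with respect to the environment, Poisson marks and uniforms inside $[-\ell,\ell]\times I$, and $\{\tau_1>t\}=\mathcal{A}_{[0,t]}$. Since the walker only moves to the right, $\mathbb{P}(\mathcal{A}_{[0,1]})=1-p<1$ by the one-jump hypothesis. Writing $\{\tau_1>2t+r\}\subseteq\mathcal{A}_{[0,t]}\cap\mathcal{A}_{[t+r,2t+r]}$ and applying $\mathscr{D}(\uc{c:decouple},\alpha)$ to the two corresponding boxes (with $r\asymp t$ so that both fit within the ``side length $\le 5r$'' constraint of Definition~\ref{d:decouple}), combined with the translation invariance of Assumption~\ref{hyp:invariant} applied to the second factor, yields the self-improving estimate $u(3t)\le u(t)^2+\uc{c:decouple}\,t^{-\alpha}$ for $u(t):=\mathbb{P}(\tau_1>t)$. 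A second-moment argument on the pairwise-decoupled ghost events $\mathcal{A}_{[2jr,(2j+1)r]}$ furnishes the seed $u(t_0)<1/2$ for $t_0$ large, and bootstrapping the recursion then gives $u(t)=O(t^{-\alpha})$; in particular $\mathbb{E}[\tau_1]<\infty$.

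The same scheme, applied to the walker viewed from $(n,\tau_n)$, should deliver the conditional estimate $\mathbb{P}(\tau_{n+1}-\tau_n>t\mid\mathcal{F}_{\tau_n})\le c\,t^{-\alpha}$ uniformly in $n$, whence $\mathbb{E}[\tau_{n+1}-\tau_n\mid\mathcal{F}_{\tau_n}]\le C$ and thus $\mathbb{E}[\tau_n]\le Cn$ by the tower property, completing the contradiction.

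The main obstacle is the uniform-in-$n$ conditional bound. The sigma-algebra $\mathcal{F}_{\tau_n}$ encodes information about the environment in a space-time region that is \emph{not} localised into any single box of side $\le 5r$, whereas Definition~\ref{d:decouple} only compares boxes of bounded side length. The resolution is to insert an $r$-buffer between $\mathcal{F}_{\tau_n}$ and the future observation window, discarding the first $O(r)$ of the window at a controlled cost (the walker stuck at $n$ fails to jump in the buffer with probability at least $p$ if the past is ``fresh''); Assumption~\ref{hyp:FT} then confines the walker's past trajectory to a polynomially-sized space-time region with exponentially small error, and the past can be decomposed into finitely many $5r$-sized boxes against which the pairwise decoupling $\mathscr{D}(\uc{c:decouple},\alpha)$ applies at cost $O(r^{-\alpha})$ per box. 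Carefully balancing these accumulated error terms against the bootstrap of the self-improving recursion is what ultimately consumes the strengthened hypothesis $\alpha>8.5$ (versus the $\alpha>8$ used in Theorem~\ref{t:main}).
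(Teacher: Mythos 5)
Your first step --- the tail bound $\mathbb{P}(\tau_1>t)=\mathbb{P}(X^o_t=0)\le c\,t^{-\alpha}$ obtained from the one-jump hypothesis, monotonicity and the self-improving recursion $u(3t)\le u(t)^2+\uc{c:decouple}t^{-\alpha}$ --- is exactly the paper's inequality \eqref{e:has_to_move}, proved the same way. After that, however, your route through the first-passage times $\tau_n$ has a genuine gap. The estimate $\mathbb{E}[\tau_{n+1}-\tau_n\mid\mathcal{F}_{\tau_n}]\le C$ (or even the unconditional $\mathbb{E}[\tau_{n+1}-\tau_n]\le C$) requires controlling the law of the environment seen from the walker at the \emph{random} space-time point $(n,\tau_n)$, and this is precisely what the hypotheses do not give. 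The decoupling $\mathscr{D}(\uc{c:decouple},\alpha)$ is a covariance bound under the stationary law between events in two boxes of side at most $5r$; it does not bound conditional probabilities given a sigma-algebra of the past, because conditioning on an event of small probability can distort the future law by a factor as large as the reciprocal of that probability. Your proposed fix (an $r$-buffer plus a decomposition of the past into $5r$-boxes decoupled pairwise) fails on two counts: the number of boxes needed to cover the region explored up to time $\tau_n$ grows with $n$, so the accumulated error is not uniform in $n$; and more fundamentally, the seed of your recursion at the displaced starting point --- ``the walker stuck at $n$ jumps within one time unit with probability at least $p$'' --- is exactly a uniform-mixing statement about the environment conditioned on the walker's history, which is what this paper is built to avoid assuming. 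The whole point of the non-uniform setting is that the environment at $(n,\tau_n)$ may, with positive conditional probability, be in a state from which the walker stalls for an uncontrolled time.

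The paper sidesteps stopping times entirely. From \eqref{e:has_to_move} it deduces only a \emph{sub-linear} bound, $\mathbb{P}[X^o_L/L<L^{-\gamma}]\le cL^{2-2\gamma-\alpha\gamma}$ (stated for the left-jumping version in \eqref{e:move_polynomial}), via a union bound over a \emph{deterministic} grid of space-time points $(x,iL^{\gamma})$: by monotonicity, if the walker advances less than $L^{1-\gamma}$ in time $L$, then some grid point carries a walker that stalls for time $L^{\gamma}$, and each grid point is deterministic so the stationary-start tail bound applies to it. The entropy $L^{2-2\gamma}$ is beaten by $L^{-\alpha\gamma}$ for a suitable $\gamma<1/4$. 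This only shows the speed cannot vanish polynomially fast, not that it is nonzero; the upgrade to strictly positive speed is then done by feeding the seed $p^1_{\tilde k}(-L_{\tilde k}^{-\gamma})\le L_{\tilde k}^{-\beta/2}$ into the renormalization of Lemma~\ref{l:inductive} (with a weaker exponent $\beta\in(5,\alpha-14/4)$), using that the total speed correction $\sum_{k\ge\tilde k}8/l_k=O(l_{\tilde k}^{-1})=O(L_{\tilde k}^{-1/4})$ is smaller than the initial speed $L_{\tilde k}^{-\gamma}$ precisely because $\gamma<1/4$. This balancing is where $\alpha>8.5$ enters. If you want to repair your argument, you should replace the regeneration structure by this deterministic-grid union bound followed by the renormalization bootstrap.
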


In other words, the above theorem gives that, if ``the random walker can only jump to the right'' and that ``starting at the origin, it has a positive probability of jumping within one time unit'' then $v>0$.
Clearly, an equivalent result yielding strictly negative speed holds when we only allow the random walk to jump to the left.

\begin{remark}
 The rate of decay in our deviation bound \eqref{e:ldb} is not optimal.
 It only reflects particularities of our renormalization techniques.
\end{remark}
\begin{remark}
 The lower bound we need to impose on $\alpha$ for Theorems \ref{t:main} and \ref{t:positive} is also not believed to be optimal.
 However, it is important to notice that the above result does not hold true if we weaken too much our decoupling condition.
In particular, we provide an example in Section~\ref{s:example} of a space-time environment satisfying a weaker decoupling hypothesis (in particular it is space-time ergodic) along with a natural random walker defined on it that fails to satisfy the Law of Large Numbers.
\end{remark}

The rest of the paper is organized as follows: Sections~\ref{s:strategy}--\ref{s:v_+=v_-} are devoted to the proof of Theorem~\ref{t:main}. In Section~\ref{s:positive} we prove Theorem~\ref{t:positive}. In Section~\ref{s:applications} we list a number of applications of our results and finally in Section~\ref{s:example} we provide a counter-example of a random walk on an ergodic environment that does not satisfy the LLN.

\section{Strategy of the proof}\label{s:strategy}

In this section, we give an overview of the idea behind the proof of Theorem~\ref{t:main} and define some important objects that will be used in the reminder of  the paper.

The main of these objects consist of two limiting values for the long-term speed of the random walker: the upper speed $v_+$ and lower speed $v_-$.
These quantities will play a central role in our arguments.
As we are going to prove below, their values coincide and are equal to the speed $v$ appearing in the statement of Theorem \ref{t:main}.

In order to define $v_+$ and $v_-$ precisely, let us first introduce an event whose occurrence indicates that the random walker has moved with average speed larger than $v\in\mathbb{R}$ during a certain interval of time.
For $H\in\mathbb{R}_+$ and $w\in\mathbb{R}^2$, we define
\begin{equation}
  \label{e:A_m_v}
  A_{H,w}(v) := \Big[ \text{there exists $y \in \big(w+[0,H)\times\{0\}\big) \cap \mathbb{L}$ s.t. $X^y_{H} - \pi_1(y) \geq v H$} \Big].
\end{equation}
See Figure~\ref{f:event_A} for an illustration of these events.

We want to study the probability of the events as in \eqref{e:A_m_v}.
In order to have a quantity that does not depend on the reference point we maximize in $w$, that is, we define
\begin{equation}
  \label{e:p_k}
  p_H(v) := \sup_{w \in \mathbb{R}^2} \mathbb{P} \big(A_{H,w}(v) \big) = \sup_{w \in [0, 1) \times \{0\}} \mathbb{P} \big(A_{H,w}(v) \big),
\end{equation}
where the second equality follows from stationarity and translation invariance.
Note that  $\big(w+[0,H)\times\{0\}\big) \cap \mathbb{L}$ takes at most two different values for $w$ varying in $[0,1)\times\{0\}$ so, in fact, the second supremum is taken over a finite set.
It is just meant to take into account cases where the reference point $w$ may not belong to $\mathbb{L}$.

\begin{figure}
  \centering
    \begin{tikzpicture}[use Hobby shortcut]
      \draw (0, 0) rectangle (10, 2);
      \draw[thick] (4, 0) -- (6, 0);
      \draw[thick] (4, -.15) -- (4, .15);
      \draw[thick] (6, -.15) -- (6, .15);
      \draw (4.5, 0) .. (4.8, .4) .. (5.5, .8) .. (5.5, 1.2) .. (6, 1.6) .. (6.5, 2);
      \draw[dashed] (4.5, 0) -- (6.2, 2);
      \draw[fill, below right] (4.5, 0) circle (.05) node {$y = Y^y_0$};
      \draw[fill, above right] (6.5, 2) circle (.05) node {$Y^y_{H}$};
      \draw[fill, above left] (6.2, 2) circle (.05) node {$y + H (v, 1)$};
      \draw[right] (10, 1) node {$H$};
      \draw [decorate,decoration={brace,amplitude=10pt}] (10, -.5) -- (0, -.5) node [black, midway, yshift=-0.6cm] {$5 H$};
    \end{tikzpicture}
  \caption{An illustration of the event $A_{H, o}(v)$.
  Starting from the point $y \in \big([0,H)\times\{0\}\big) \cap \mathbb{L}$ the walker attains an average speed larger than $v$ during the time interval $[0,H]$.}
  \label{f:event_A}
\end{figure}
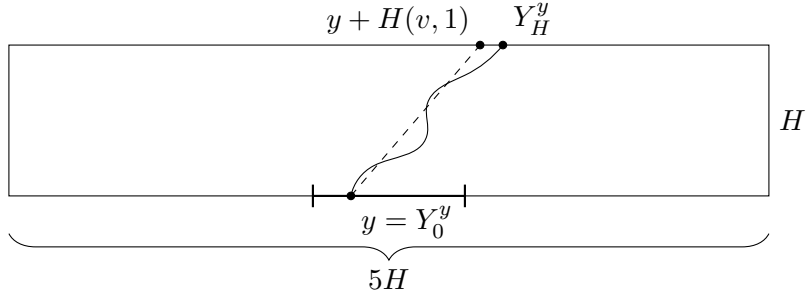

We can now introduce the \emph{upper speed} of the random walker as
  \begin{equation}
    \label{e:v_+}
    v_+ := \inf \big\{ v \in \mathbb{R} \colon \liminf_{H \to \infty} p_H (v) = 0 \big\}.
  \end{equation}

Similarly, we define its \emph{lower speed},
\begin{equation}
  \label{e:v_-}
  v_- := \sup \big\{ v \in \mathbb{R} \colon \liminf_{H \to \infty} \tilde{p}_H(v) = 0 \big\},
\end{equation}
where, analogously to the quantities $p_H(v)$ previously defined, we write
\begin{equation}\label{e:tildep_H}
  \tilde{p}_H(v) := \sup_{w \in \mathbb{R}^2} \mathbb{P}\big(\tilde{A}_{H,w}(v) \big)=  \sup_{w \in [0, 1) \times \{0\}} \mathbb{P}\big(\tilde{A}_{H,w}(v) \big)
\end{equation}
with
\begin{equation}
  \label{e:tilde_A}
  \tilde{A}_{H,w}(v):=\Big[\text{there exists $y \in \big(w+[0,H)\times\{0\}\big) \cap \mathbb{L}$ with $X^y_{H} - \pi_1(y) \leq v H$}\Big].
\end{equation}

\begin{remark}Assumption~\ref{hyp:v+v-} implies that $v_+ \leq 1$ and $v_- \geq -1$.
\end{remark}

Roughly speaking, the definition of $v_+$ implies that for any $v>v_+$, the probability that the average speed of the walker exceeds $v$ vanishes as the amount of time elapsed increases.
The next lemma shows that, it vanishes at least polynomially fast, provided that $\alpha$ is large enough.

\nc{c:deviation}
\begin{lemma}
  \label{l:deviation_interp}
  Suppose Assumptions~\ref{hyp:invariant}, \ref{hyp:v+v-} and \ref{hyp:FT} are satisfied, as well as the decoupling property $\mathscr{D}(\uc{c:decouple}, \alpha)$ with $\alpha > 5$, then for any $\epsilon > 0$ there exists a constant $\uc{c:deviation}=\uc{c:deviation}(\epsilon)$ such that
  \begin{equation}
  \label{e:deviation_interp}
    \begin{split}
      & p_H(v_+ + \epsilon) \leq \uc{c:deviation}H^{-\alpha/4} \text{ and}\\
      & \tilde{p}_H (v_- - \epsilon)\leq \uc{c:deviation}H^{-\alpha/4},
    \end{split}
  \end{equation}
  for every $H\in\mathbb{R}_+$.
\end{lemma}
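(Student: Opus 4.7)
The plan is to prove \eqref{e:deviation_interp} by a multi-scale renormalization argument that iteratively upgrades a weak control on $p_{H_0}$ at one base scale to a polynomial tail bound at all larger scales. By the very definition of $v_+$ in \eqref{e:v_+}, I may fix an initial scale $H_0 = H_0(\epsilon)$ at which $p_{H_0}(v_+ + \epsilon/2)$ is as small as I like — say smaller than an absolute constant $\delta_0$ to be chosen from the recursion below. I then introduce a scale sequence $H_{k+1} = L_k H_k$ and a summable sequence of speed corrections $\epsilon_k \downarrow 0$ with $\sum_{k\geq 1} \epsilon_k \leq \epsilon/2$, and prove by induction on $k$ that $p_{H_k}\!\big(v_+ + \epsilon/2 + \textstyle\sum_{j\leq k}\epsilon_j\big) \leq H_k^{-\alpha/4}$. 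The dual bound on $\tilde p_H(v_- - \epsilon)$ is proved by the symmetric argument, replacing ``some walker advances fast'' by ``some walker lags behind'' throughout and appealing to monotonicity \eqref{e:monotone} in exactly the same way.

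The heart of the argument is a two-scale inclusion at step $k$. Suppose $A_{H_{k+1},w}(v_{k+1})$ occurs with some witness trajectory. A pigeonhole in time, combined with the supremum over starting points in the definition of $p_H$ and with monotonicity \eqref{e:monotone}, produces two (a priori distinct) witness trajectories realizing the fast event at scale $H_k$ — one in the first portion of $[0,H_{k+1}]$ and one in the last portion — separated by a time gap of order $H_k$. Assumption~\ref{hyp:FT} lets me confine the walkers' spatial positions to intervals of length $\lesssim H_k$, so both events are measurable with respect to disjoint space-time boxes $B_1, B_2$ of side length at most $5 H_k$ and time-distance $d(B_1,B_2)\geq H_k$. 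Applying $\mathscr{D}(\uc{c:decouple},\alpha)$ then gives
\[
p_{H_{k+1}}(v_{k+1}) \;\leq\; C(L_k)\, p_{H_k}(v_k)^{2} \;+\; \uc{c:decouple}\, H_k^{-\alpha},
\]
the combinatorial factor $C(L_k)$ accounting for the choices of box positions. Tuning the scale-growth factors $L_k$ so that the squared term and the decoupling error balance in the iteration, one obtains the inductive estimate with exponent $\alpha/4$; the hypothesis $\alpha > 5$ is exactly what is needed to beat the polynomial losses from $C(L_k)$ and from the spatial localization, and to make the base case go through once $H_0$ is taken large. Finally, a short interpolation between consecutive scales $H_k$ and $H_{k+1}$ extends the bound from the subsequence to all sufficiently large $H$, at the cost of an extra multiplicative constant absorbed in $\uc{c:deviation}(\epsilon)$.

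The main obstacle is the two-scale inclusion itself. A naive pigeonhole applied to a single witness trajectory only delivers one fast subinterval and yields a union bound in probability, which is too weak to iterate. The resolution exploits two structural features jointly: the supremum over starting points in \eqref{e:p_k}, which allows one to ``relay'' two different witness trajectories in the two halves of the time interval, and the spatial ordering from \eqref{e:monotone}, which transfers quantitative information about one trajectory to its neighbors. The subtle point is the design of the corrections $\epsilon_k$: they must decrease fast enough that $\sum_k \epsilon_k \leq \epsilon/2$, yet slow enough that whenever one half of a witness falls below threshold $v_k$ by $\epsilon_{k+1}$, the other half is forced above threshold $v_k$ by a compensating amount — so that the slower witness still realizes an instance of the inductive hypothesis at the previous scale. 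This is where the proof really lives; once it is in place, the decoupling inequality together with the a priori speed bounds from Assumptions~\ref{hyp:v+v-}–\ref{hyp:FT} do the rest of the work routinely.
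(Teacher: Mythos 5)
Your proposal follows essentially the same route as the paper: fix a base scale where $p_{H_0}(v)$ is small using the $\liminf$ in the definition of $v_+$, run a multiscale recursion with a summable sequence of speed corrections (the paper's $v_{k+1}=v_k+8/l_k$), extract from a fast trajectory at scale $k+1$ two fast sub-events at scale $k$ supported in time-separated boxes (the paper's Lemma~\ref{l:cascade}, which localizes via the events $F_m$ from Assumption~\ref{hyp:FT}), square the probability via $\mathscr{D}(\uc{c:decouple},\alpha)$ (Lemma~\ref{l:inductive}), and finish by interpolating between consecutive scales. The only differences are cosmetic bookkeeping (the paper proves exponent $\alpha/2$ along the subsequence and only drops to $\alpha/4$ in the interpolation, and its layer-counting argument actually extracts three fast layers to guarantee the time separation), so the approach is correct and matches the paper's.
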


This shows that $v_+$ and $v_-$ limit the rate of displacement of the random walk in the sense that the probability that it moves faster than $v_+$ or slower than $v_-$ decays fast.

It might be possible to conclude directly from their definitions that $v_+ \geq v_-$, but this is also a simple consequence of \eqref{e:deviation_interp}.

Indeed, assume by contradiction that $v_+ < v_-$ and define $\epsilon := (v_- - v_+)/2 >0$ and $\bar{v} :=(v_+ + v_-)/2$.
Equation \eqref{e:deviation_interp} implies that, for some $H$ large enough (depending on $\alpha$, $v_+$ and $v_-$) $\mathbb{P}(X^o_H \geq \bar{v}H) \leq 1/4$ and $\mathbb{P}(X^ o_H \leq \bar{v}H) \leq 1/4$ hold simultaneously providing a contradiction.

Having  \eqref{e:deviation_interp} it remains to show that $v_+ = v_-$, which will ultimately imply the desired law of large numbers \eqref{e:lln} and concentration estimate \eqref{e:ldb}.

\begin{lemma}\label{l:v_+=v_-}
 Suppose Assumptions~\ref{hyp:invariant}, \ref{hyp:v+v-} and \ref{hyp:FT} are satisfied, as well as the decoupling property $\mathscr{D}(\uc{c:decouple}, \alpha)$ with $\alpha > 8$, then
\begin{equation}
v_+=v_-.
\end{equation}
\end{lemma}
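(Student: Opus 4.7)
I would argue by contradiction, assuming $v_+ > v_-$, and aim to combine the non-crossing property \eqref{e:monotone} with the decoupling inequality $\mathscr{D}(\uc{c:decouple},\alpha)$ to reach a contradiction. Set $\Delta := v_+ - v_- > 0$, choose $\epsilon \in (0, \Delta/10)$, and put $v^* := (v_+ + v_-)/2$, so that $v^* - \epsilon > v_-$ and $v^* + \epsilon < v_+$. The definitions of $v_\pm$ then guarantee a constant $c > 0$ such that
\begin{equation*}
  p_H(v^* + \epsilon) \geq c \qquad\text{and}\qquad \tilde p_H(v^* - \epsilon) \geq c
\end{equation*}
hold for arbitrarily large $H$. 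In addition, Lemma~\ref{l:deviation_interp} provides polynomial deviation bounds for trajectories leaving the cone of slopes $[v_- - \epsilon, v_+ + \epsilon]$, which will be used to propagate the walker's position through intermediate times.

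The key rigidity comes from \eqref{e:monotone}. On a fixed realization, if a walker starts at $y_1$ at time $0$ with displacement $X^{y_1}_H - \pi_1(y_1) \geq (v^* + \epsilon)H$ while another walker starts at $y_2 \geq y_1$ at time $0$ with $X^{y_2}_H - \pi_1(y_2) \leq (v^* - \epsilon)H$ over the same window $[0, H]$, then $X^{y_1}_H \leq X^{y_2}_H$ forces $\pi_1(y_2) - \pi_1(y_1) \geq 2\epsilon H$. Hence, inside any spatial strip of width less than $2\epsilon H$, the coexistence of a fast path (speed $\geq v^* + \epsilon$) and a slow path (speed $\leq v^* - \epsilon$) over the same time interval is forbidden.

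To turn this qualitative obstruction into a quantitative contradiction, I would set up a one-scale renormalization at $H$: take a pair of space-time boxes, one hosting a fast event $A_{H, w_1}(v^* + \epsilon)$ and the other a slow event $\tilde A_{H, w_2}(v^* - \epsilon)$, placed so that the monotonicity obstruction applies after propagating the walker's position through an intermediate time gap, while the two boxes are separated in time by a gap of order $H$ so that $\mathscr{D}(\uc{c:decouple}, \alpha)$ yields approximate independence with error $\lesssim H^{-\alpha}$. Using Lemma~\ref{l:deviation_interp} to track the walker's position in the gap (contributing a further error of order $H^{-\alpha/4}$), together with union bounds over the $O(H)$ integer starting positions inside each box, one ends up with an inequality roughly of the form $c^2 \lesssim H^{2 - \alpha/4}$. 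The condition $\alpha > 8$ is then exactly what makes the right-hand side vanish as $H \to \infty$, defeating the positive lower bound $c^2$ and producing the desired contradiction.

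\paragraph{Main obstacle.} The main technical difficulty is to reconcile two competing requirements: the monotonicity obstruction is sharpest when the fast and slow events share the same time window, whereas the decoupling inequality requires temporal separation of order the side length. Bridging them by tracking the walker's position across the temporal gap with Lemma~\ref{l:deviation_interp}, and carefully balancing all error contributions (union bounds over starting positions, decoupling errors for each pair of boxes, and the polynomial deviation estimates), is the delicate part of the argument and is what dictates the threshold $\alpha > 8$.
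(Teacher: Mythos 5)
Your proposal has a genuine gap at its central step. Assuming $v_+>v_-$, the simultaneous occurrence of a fast event $A_{H,w_1}(v^*+\epsilon)$ and a slow event $\tilde A_{H,w_2}(v^*-\epsilon)$ in two boxes separated in time by order $H$ is \emph{not} contradictory and cannot be made unlikely: the decoupling inequality gives the \emph{lower} bound $\mathbb{P}[A\cap\tilde A]\geq c^2-O(H^{-\alpha})>0$, and nothing forces an upper bound tending to zero. The monotonicity property \eqref{e:monotone} forbids a fast and a slow crossing of the \emph{same} time window from nearby starting points, but once the two windows are disjoint (as decoupling requires) a single trajectory may perfectly well be fast in one and slow in the other; Lemma~\ref{l:deviation_interp} only controls speeds outside $[v_--\epsilon,v_++\epsilon]$ and says nothing about how the walker traverses the gap at intermediate speeds. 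Consequently the inequality ``$c^2\lesssim H^{2-\alpha/4}$'' you aim for has no derivation: the left-hand side bounds from below a probability that is genuinely bounded away from zero. More structurally, no one-scale argument can close the contradiction, because a single slow window only shows the walker is delayed with probability at least $c^2-o(1)$, whereas to contradict the definition of $v_+$ (which merely asserts $\liminf_H p_H(v)>0$ for every $v<v_+$) one must show the walker is delayed with probability tending to $1$.

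The paper's proof supplies exactly the missing amplification, in two layers. It first introduces $H$-trapped points (essentially your slow events, which by Lemma~\ref{l:some_trapped} occur with a probability that is positive but possibly tiny), then $(H,r)$-threatened points, obtained by placing $r$ potential traps along a line of slope $v_+$ emanating from the walker's current position; a renormalization in $r$ (Lemmas~\ref{l:little_threatened} and~\ref{l:threatened}) boosts the probability of being threatened to $1-O(r^{-\alpha})$. A deterministic dichotomy (Lemma~\ref{l:delays}) exploiting monotonicity shows that from a threatened point the walker is either delayed over time $rH$ or must exceed speed $v_+$ on some window, the latter being ruled out by Lemma~\ref{l:deviation_interp}. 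Since the trajectory is itself random and could conceivably dodge the threatened points, a second renormalization over the scales $L_k$ (Lemma~\ref{l:threatened_paths}) shows that with high probability \emph{every} allowed path spends at least half of its time at threatened points; hence every path is delayed on a positive fraction of windows and $p_{L_k^2}(v_+-\eta/2)\to 0$, the desired contradiction. These two amplification layers, absent from your sketch, are where the hypothesis $\alpha>8$ is actually consumed.
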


The intuition behind the proof of this lemma is fairly simple.
The implicit definition of $v_+$ and $v_-$ assures that the random walker has a good chance of attaining speeds close to both of these values over sufficiently long time scales.
On the other hand the probability that it runs faster than $v_+$ (and also slower than $v_-$) vanishes fast.
Assume by contradiction that $v_+ > v_-$.
By the fact that the random walker cannot run faster than $v_+$, the moments when its speed stays close to $v_-$ should delay it sufficiently to prevent it from attaining a speed close to $v_+$ over long time scales.
This would give rise to a contradiction implying that $v_{-} = v_{+}$.

Lemma~\ref{l:deviation_interp} will be proved in Section~\ref{s:deviations} and Lemma~\ref{l:v_+=v_-} in Section~\ref{s:v_+=v_-}.
Together they establish Theorem~\ref{t:main}.

\section{Upper and lower deviations of the speed}\label{s:deviations}

This section is devoted to the proof of Lemma  \ref{l:deviation_interp} via a renormalization procedure.
We only prove the decay of $p_H(v_++\epsilon)$; that of $\tilde{p}_H(v_--\epsilon)$ is completely analogous as it can be seen by considering the random walker obtained by replacing $g$ by $-g$.
Indeed, the upper speed for the new walker equals the negative of the lower speed for the original one.

The section is divided into three main parts.
In Section \ref{s:scales}, we establish the sequences of scales along which we analyze the system.
Next, in Section \ref{s:decay_scale}, we prove a version of Lemma \ref{l:deviation_interp} obtaining a power-law upper bound for $p_H$ similar to the one in \eqref{e:deviation_interp} but only for $H$ restricted to multiples of this sequence of scales.
In Section \ref{s:proof_deviation_interp} we interpolate in order to lift the restriction in the values of $H$.

\subsection{Scales and boxes}
\label{s:scales}
We start by defining recursively the following sequence
\begin{equation}
  \label{e:L_k}
  L_0 := 10^{10} \quad \text{and} \quad L_{k + 1} := l_k L_k \text{ for $k \geq 0$, where $l_k := \lfloor L_k^{1/4} \rfloor$}.
\end{equation}
These numbers will be used throughout the text in order to define the scales of time and space in which we analyze the displacement of the random walker.

\nc{c:scale_round}
Observe that there exists a constant $\uc{c:scale_round} > 0$ such that
\begin{equation}
  \label{e:scale_round}
  \uc{c:scale_round} L_k^{5/4} \leq L_{k + 1} \leq L_k^{5/4}, \text{ for every $k \geq 0$}.
\end{equation}
For a given integer-valued $L \geq 1$ and a real-valued $h \geq 1$, we define the box
\begin{equation}
  \label{e:B_L_h}
  B^h_L := [-2 h L, 3 h L) \times [0, h L) \subseteq \mathbb{R}^2,
\end{equation}
as well as the interval
\begin{equation}
  \label{e:I_L_h}
  I^h_L := [0, h L) \times \{0\} \subseteq \mathbb{R}^2,
\end{equation}
(see Figure~\ref{f:event_A} where $H=hL$).
In addition, for $w \in \mathbb{R}^2$, we denote
\begin{equation}
  \begin{split}
    B^h_L(w) & := w + B^h_L \,\, \text{ and }\\
    I^h_L(w) & := w + I^h_L.
  \end{split}
\end{equation}

\begin{remark}
Since the definitions of $B^h_L$ and $I^h_L$ depend only on the product $hL$, it may not be clear yet why we consider the double index.
It will in fact be very useful for us to use renormalization techniques in two steps, varying one parameter after the other. One can think of $h$ as a zooming parameter that, when increased, maps the discrete lattice into the continuous space.
Differently, $L$ (which will be chosen as $L_k$ later) can be thought of as the macroscopic size of a box.
\end{remark}

\begin{remark}
It is important to notice that $B^h_L(w)$ is a continuous box, meaning that it is defined as a subset of $\mathbb{R}^2$ rather than only of $\mathbb{L}$.
This choice will be useful and simplify the notation later when we will need to consider translations of these boxes by vectors of type $(vt, t)$ (for a given speed $v \in \mathbb{R}$) which are not necessarily elements of $\mathbb{L}$.
\end{remark}

In order to index the boxes and intervals defined   above in a more concise manner, we introduce the set of indices
\begin{equation}
  \label{e:M_h_k}
  M^h_k := \{h\} \times \{k\} \times \mathbb{R}^2,
\end{equation}
so that, for $m = (h,k,w) \in M^h_k$ and $v\in\mathbb{R}$, we can write
\begin{equation}
B_m := B^h_{L_k}(w)\,,\quad I_m := I^h_{L_k}(w)\quad \text{and}\quad A_m(v):=A_{hL_k,w}(v).
\end{equation}

For some of our purposes we need to assure that for $m\in M^h_k$, after starting at a point in $I_m \cap \mathbb{L}$, the random walker, remains inside $B_m$ up to time $hL_k$ (as well as the sites it needs to inspect to decide its jumps). This explains why we defined $B_m$ having its width bigger than its height.
For each $m=(h,k,w) \in M_k^h$ we define
\begin{equation}\label{e:boundedspeed}
  F_m := \Big[
  \begin{array}{c} \text{for every allowed path $\gamma$ starting at $I_m \cap \mathbb{L}$ with}\\  \text{length $h L_k$, $\left\{\gamma(\pi_2(w)+t)\colon t\in [0,hL_k]\right\}+[-\ell,\ell]\subseteq B_m$}
   \end{array}
   \Big].
 \end{equation}
From Assumption~\ref{hyp:FT}, we deduce easily that there exists\nc{c:F_m} $\uc{c:F_m} > 0$ such that
\begin{equation}
\label{e:F_m_decay}
\mathbb{P}(F^{\mathsf{c}}_m) \leq \uc{c:F_m}^{-1} \exp\{ -\uc{c:F_m} h L_k \}.
\end{equation}

\subsection{The decay of $p_H(v)$ along a particular sequence}
\label{s:decay_scale}
In this section we prove the following
\begin{lemma}\label{l:deviation_reduction}
Under the hypotheses of Lemma~\ref{l:deviation_interp}, for all $v>v_+$ there exists $\uc{c:h_hat} = \uc{c:h_hat}(v) \geq 1$ and $\uck{c:k_bar}=\uck{c:k_bar}(v) \geq 1$ such that for every $k \geq \uck{c:k_bar}$
\begin{equation}
\label{e:deviation_reduction}
p_{\uc{c:h_hat}L_k}(v) \leq L_k^{-\alpha/2}.
\end{equation}
\end{lemma}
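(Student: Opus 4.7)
The proof proceeds by a multiscale renormalization along $\{L_k\}_{k \geq 0}$. Fix $v > v_+$, pick an intermediate speed $v' \in (v_+, v)$, and introduce an auxiliary increasing sequence
\[
v_k := v' + (v - v')(1 - 2^{-k}), \qquad k \geq 0,
\]
which converges to $v$ from below. Since $v' > v_+$, the definition of $v_+$ gives $\liminf_{H \to \infty} p_H(v') = 0$, so we may fix $\uc{c:h_hat} \geq 1$ making $p_{\uc{c:h_hat} L_0}(v_0)$ as small as required for the base step (in particular, $\leq L_0^{-\alpha/2}$ with room to spare). Writing $p_k := p_{\uc{c:h_hat} L_k}(v_k)$, it suffices to prove inductively that $p_k \leq L_k^{-\alpha/2}$ for all $k \geq \uck{c:k_bar}$: since $v_k \leq v$, the event $A_m(v)$ is contained in $A_m(v_k)$, so $p_{\uc{c:h_hat} L_k}(v) \leq p_k$.

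The key ingredient is a triggering step. Given $m \in M^{\uc{c:h_hat}}_{k+1}$ and a witness $y \in I_m \cap \mathbb{L}$ of $A_m(v_{k+1})$, partition $[0, \uc{c:h_hat} L_{k+1}]$ into $l_k$ consecutive sub-intervals of length $\uc{c:h_hat} L_k$ and call the $j$-th one \emph{fast} if the horizontal displacement of $Y^y$ during it is $\geq v_k \uc{c:h_hat} L_k$. Using that the walker's horizontal speed is bounded by $1$, a pigeonhole argument shows that the number of fast sub-intervals is at least
\[
l_k \cdot \frac{v_{k+1} - v_k}{1 - v_k} \geq \frac{v - v'}{1 - v'} \cdot \frac{l_k}{2^{k+1}};
\]
since $l_k \sim L_k^{1/4}$ grows doubly exponentially in $k$, there is $\uck{c:k_bar}$ such that for $k \geq \uck{c:k_bar}$ this quantity exceeds $3$, so we can extract indices $j_1 < j_2$ with $j_2 - j_1 \geq 2$ and both sub-intervals fast. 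At each such time $j_i \uc{c:h_hat} L_k$, the walker lies in a range of length $\leq 3 \uc{c:h_hat} L_{k+1}$, hence in one of at most $4 l_k$ grid cells of length $\uc{c:h_hat} L_k$; picking the cell containing the walker defines $w_i$ and yields a scale-$k$ event $A_{(\uc{c:h_hat}, k, w_i)}(v_k)$.

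On the event $F_m$ from \eqref{e:F_m_decay}, a union bound over the $\leq C l_k^4$ configurations of $(j_1, j_2, w_1, w_2)$ gives
\[
p_{k+1} \leq C l_k^4 \sup_{w_1, w_2} \mathbb{P}\bigl(A_{(\uc{c:h_hat},k,w_1)}(v_k) \cap A_{(\uc{c:h_hat},k,w_2)}(v_k)\bigr) + \mathbb{P}(F_m^{\mathsf c}).
\]
By construction the boxes $B_{(\uc{c:h_hat},k,w_i)}$ have side length $\leq 5 \uc{c:h_hat} L_k$ and time-distance $\geq \uc{c:h_hat} L_k$. Intersecting each scale-$k$ event with its own $F$-event (so that each is measurable in its box, at negligible cost by \eqref{e:F_m_decay}) and applying the decoupling inequality $\mathscr{D}(\uc{c:decouple}, \alpha)$ with $r = \uc{c:h_hat} L_k$ yields
\[
p_{k+1} \leq C l_k^4 \bigl(p_k^2 + \uc{c:decouple}(\uc{c:h_hat} L_k)^{-\alpha}\bigr) + C' \exp(-c \uc{c:h_hat} L_{k+1}).
\]
Using $l_k^4 \leq L_k$, $L_{k+1} \leq L_k^{5/4}$ and the inductive hypothesis $p_k \leq L_k^{-\alpha/2}$, both the squaring and the decoupling terms are of order $L_k^{1-\alpha}$, and $L_k^{1-\alpha} \leq L_{k+1}^{-\alpha/2}$ amounts to $1 - \alpha \leq -5\alpha/8$, i.e.\ $\alpha \geq 8/3$. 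This is amply satisfied by $\alpha > 5$, so the induction closes provided $\uck{c:k_bar}$ is taken large enough that $n \geq 3$ and the base bound propagates.

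The main obstacle is the triggering step: one must simultaneously guarantee enough fast sub-intervals ($n \geq 3$) for large $k$, select two that are separated enough to match the geometric hypotheses of $\mathscr{D}(\uc{c:decouple}, \alpha)$ (box-sides $\leq 5r$ and time-distance $\geq r$), and bound the combinatorial factor $l_k^4$ from enumerating their locations. The squaring $p_k^2$ obtained from decoupling is what ultimately absorbs this polynomial blow-up and lets the exponent $\alpha/2$ be reproduced at the next scale.
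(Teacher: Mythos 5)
Your argument is correct and is essentially the paper's proof: a multiscale induction along $(L_k)$ with a slowly increasing family of intermediate speeds, a pigeonhole step extracting two scale-$k$ events supported in boxes separated in time by at least $\uc{c:h_hat}L_k$, decoupling via $\mathscr{D}(\uc{c:decouple},\alpha)$, and the recursion $p_{k+1}\le C\,l_k^4\big(p_k^2+cL_k^{-\alpha}\big)+\dots$; the paper merely takes increments $v_{k+1}-v_k=8/l_k$ instead of your geometric ones and phrases the triggering step contrapositively (at most two occurring layers forces the total displacement below $v_{k+1}hL_{k+1}$), which is the same count. Two details should be tightened. The walker's horizontal speed is not deterministically bounded by $1$ in this framework (the $T^x_i$ form a general point process, and even for Poisson clocks the number of jumps in a sub-interval is unbounded), so the pigeonhole must be run on the event $\cap_{m'\in C_m}F_{m'}$, which bounds each sub-interval's displacement by $3\uc{c:h_hat}L_k$ and replaces $1-v_k$ by $3-v_k\le 4$ in your lower bound on the number of fast layers --- the count still diverges since $l_k2^{-k}\to\infty$, so nothing else changes. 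Also, since the inductive step only closes for $k\ge\uck{c:k_bar}$, the base case must be anchored at scale $\uck{c:k_bar}$ rather than at scale $0$; the same choice of $\uc{c:h_hat}$ is available there because $v_{\uck{c:k_bar}}>v'>v_+$ still gives $\liminf_{H\to\infty}p_H(v_{\uck{c:k_bar}})=0$.
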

The inequality \eqref{e:deviation_reduction} only concerns the decay of $p_H(v)$ for $H$ taking values along a specific sequence of multiples of the $L_k$'s (that depends on $v$).
We will prove it using a recursive inequality involving quantities that are close to $p_{hL_k}$ and $p_{hL_{k+1}}$ (Lemma~\ref{l:inductive}, \eqref{e:inductive}).
In turn, this recursive inequality follows from an intermediate result (Lemma~\ref{l:cascade}), which relates the occurrence of an event of the type $A_{hL_{k+1},w}$ with the occurrence of two events of the type $A_{hL_k,w'}$ supported on boxes that are well-separated in time.
As we have already mentioned, we will show in Section~\ref{s:proof_deviation_interp} how Lemma~\ref{l:deviation_interp} follows from the previous lemma via a simple interpolation procedure.

\nck{c:k_bar}
\nc{c:h_hat}

Let us start by introducing some few extra definitions.
Given $v >v_+$, fix an integer \nck{k:speed} $\uck{k:speed} = \uck{k:speed}(v)$ large enough so that
\begin{equation}
  \sum_{k \geq \uck{k:speed}} \frac{8}{l_k} < \frac{v - v_+}{2}.
\end{equation}
(recall the definition of $l_k$ below \eqref{e:L_k}).
Now set
\begin{equation}
 \label{e:v_k_o}
  v_{\uck{k:speed}} := \frac{v + v_+}{2} \quad \text{and} \quad v_{k + 1} := v_k + \frac{8}{l_k} \text{ for every $k \geq \uck{k:speed}$}.
\end{equation}
Note that $v_\infty := \lim_{k \to \infty} v_k < v_{\uck{k:speed}} + (v - v_+)/2 = v$.
In particular, $v_k \in (v_+, v)$ for every $k \geq \uck{k:speed}$ (see Figure \ref{fig:v_k}).
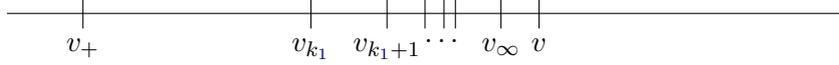
\begin{figure}
  \begin{center}
    \begin{tikzpicture}
      \draw (-1, 0) -- (10, 0);
      \draw (0, .2) -- (0, -.2) node [below] {$v_+$};
      \draw (3, .2) -- (3, -.2) node [below] {$v_{\uck{k:speed}}$};
      \draw (6, .2) -- (6, -.2) node [below] {$v$};
      \draw (4, .2) -- (4, -.2) node [below] {$v_{\uck{k:speed} + 1}$};
      \draw (4.5, .2) -- (4.5, -.2);
      \draw (4.9, .2) -- (4.9, -.2);
      \draw (4.75, .2) -- (4.75, -.2) node [below] {$\dots$};
      \draw (5.5, .2) -- (5.5, -.2) node [below] {$v_{\infty}$};
    \end{tikzpicture}
  \end{center}
  \caption{The sequence of velocities $v_k$ as defined in \eqref{e:v_k_o}}
  \label{fig:v_k}
\end{figure}

Given $m \in M^h_{k + 1}$ in the form $\big( h, k + 1, (z, t) \big)$, there exists a set $C_m \subseteq M^h_k$ satisfying
\begin{gather}
  \label{e:pave_number}
  |C_m| = 5 l_k^2 \text{ and}\\
  \label{e:pave_lines}
  \bigcup_{m' \in C_m} I_{m'} = B_m \cap \big( \mathbb{R} \times (t + h L_k \mathbb{Z}) \big).
\end{gather}

We are now ready to state a result that relates the occurrence of the events of the type $A_{hL_k,w}$ in two consecutive scales.
Recall that $d(B_1, B_2)$ stands for the time-distance between a pair of boxes as defined above Definition~\ref{d:decouple}.

\begin{lemma}
  \label{l:cascade}
  Let $k\geq k_1$. Given $m \in M^h_{k + 1}$, on the event $A_m(v_{k + 1}) \cap \big( \cap_{m' \in C_m} F_{m'} \big)$, there exist two indices $m_1, m_2 \in C_m$ such that
  \begin{equation}
  \label{e:dBm1Bm2}
    A_{m_i}(v_k) \text{ occurs for $i = 1, 2$ and }
    d(B_{m_1}, B_{m_2}) \geq h L_k.
  \end{equation}
\end{lemma}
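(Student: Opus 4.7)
The plan is to extract two well-separated fast sub-slabs from a single fast trajectory on the macroscopic scale. On the event $A_m(v_{k+1})$, I will fix a witness $y \in I_m \cap \mathbb{L}$ satisfying $X^y_{hL_{k+1}} - \pi_1(y) \geq v_{k+1} hL_{k+1}$, partition the time interval of length $hL_{k+1}$ into $l_k$ sub-slabs of length $hL_k$, and record the intermediate positions $P_j := Y^y_{jhL_k}$ together with the displacements $u_j := \pi_1(P_j) - \pi_1(P_{j-1})$. The witness property rewrites as $\sum_{j=1}^{l_k} u_j \geq v_{k+1} hL_{k+1} = (v_k + 8/l_k)\, l_k hL_k$, using the choice $v_{k+1} = v_k + 8/l_k$ from \eqref{e:v_k_o}.

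The central intermediate step is a uniform per-slab bound $|u_j| \leq 3 hL_k$, which follows once one knows that $P_{j-1}$ lies in some $I_{m'_j}$ with $m'_j \in C_m$: then $F_{m'_j}$ confines the walker during the next slab to $B_{m'_j}$, whose horizontal span relative to its base $I_{m'_j}$ is $5 hL_k$, forcing $|u_j| \leq 3 hL_k$. I plan to propagate $P_j \in B_m$ through all $j$ by combining the forward envelope $|X_j - X_0| \leq 3 j hL_k$, which already suffices for $j \leq \tfrac{2}{3} l_k$, with the backward envelope $X_j \geq X_0 + v_{k+1} hL_{k+1} - 3(l_k - j) hL_k$ deduced from the target together with the same per-slab bound applied to later slabs, and using $v_{k+1} > v_+ \geq -1$ to exclude a leftward exit. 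A short case analysis handles the one delicate possibility (a brief rightward excursion outside $B_m$) by restricting the counting step to the truncated range $[1, j^*]$ of slabs before any such exit, where the required inequality still holds easily.

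With the uniform bound in hand, a pigeonhole step completes the argument. Let $F := \{j : u_j \geq v_k hL_k\}$ be the set of \emph{fast} slabs. Splitting $\sum u_j$ into fast and non-fast contributions gives
\[
v_k l_k hL_k + 8 hL_k \leq \sum_{j=1}^{l_k} u_j < 3 |F| hL_k + (l_k - |F|) v_k hL_k,
\]
hence $|F|(3 - v_k) > 8$; since $v_k > v_+ \geq -1$ gives $3 - v_k < 4$, this forces $|F| \geq 3$. Choosing three fast slabs $j_1 < j_2 < j_3$ automatically yields $j_3 - j_1 \geq 2$, so the boxes $B_{m_1} := B_{m'_{j_1}}$ and $B_{m_2} := B_{m'_{j_3}}$ are separated by $(j_3 - j_1 - 1) hL_k \geq hL_k$ in time, i.e.\ they satisfy \eqref{e:dBm1Bm2}. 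Each $A_{m_i}(v_k)$ is then witnessed by the walker starting at $P_{j_i - 1} \in I_{m_i}$, which realizes displacement $u_{j_i} \geq v_k hL_k$ in time $hL_k$.

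The main obstacle I expect is the confinement step in the second paragraph: propagating $P_j \in B_m$ past the break-even index $j \approx \tfrac{2}{3} l_k$ using only the scale-$k$ events $F_{m'}$, with no analogous event available at scale $k+1$. The naive forward induction fails there because the $3 j hL_k$ envelope overflows the $2 hL_{k+1}$ horizontal buffer of $B_m$; pairing it with the backward envelope from the witness and exploiting the slack constant $8$ in $v_{k+1} - v_k = 8/l_k$ is what makes the argument close cleanly, including in the boundary case where the walker performs a short rightward excursion outside $B_m$ at the very end of the macroscopic time window.
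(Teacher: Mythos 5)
Your argument is essentially the paper's: decompose the macroscopic displacement into $l_k$ layers, use the events $F_{m'}$ to bound each layer's displacement by $3hL_k$, and count fast layers to find three (hence two whose boxes are time-separated by $hL_k$) on which $A_{m'}(v_k)$ occurs --- the paper phrases this contrapositively (at most two fast layers forces $X^y_{hL_{k+1}}-\pi_1(y)<v_{k+1}hL_{k+1}$), but the arithmetic, using $v_k>-1$ and $v_{k+1}=v_k+8/l_k$, is identical. The confinement step you single out as the main obstacle is dispatched in the paper by the one-line assertion $\cap_{m'\in C_m}F_{m'}\subseteq F_m$, which closes by forward bookkeeping alone: the index of the interval $I_{m'}$ containing the walker at the start of a layer changes by at most $2$ per layer (the end-of-layer position lies in $[(p-2)hL_k,(p+3)hL_k)$), so after at most $l_k-1$ layers it remains among the $5l_k$ intervals paving each level of $B_m$ --- no backward envelope or case analysis is needed.
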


\begin{proof}
First notice that $\cap_{m'\in C_m}F_{m'}\subset F_m$.
  Without loss of generality, we assume that $m$ is of the form $\big( h, k + 1, (z, 0) \big)$.
It is enough to show that
  \begin{display}
   \label{e:Amivk}
   there exist $m_1, m_2, m_3 \in C_m$ such that $A_{m_i}(v_k)$ occurs for each $i = 1, 2, 3$ and $(\pi_2(B_{m_i}))_{i = 1, 2, 3}$ are disjoint.
  \end{display}
Let us assume that \eqref{e:Amivk} does not hold, so that
  \begin{display}
    \label{e:three_fail}
    for all but at most two indices $j \in \{ 0, \dots, l_k - 1 \}$, $\big(A_{m'}(v_k)\big)^{\mathsf{c}}$ occurs for every box $B_{m'}$ with $m' \in C_m$ of type $m' = \big( h, k, (x, j h L_k) \big)$.
  \end{display}
For all $y \in I_m$, we can write
  \begin{equation}
    X^y_{hL_{k + 1}} - \pi_1(y)
    = \sum_{j = 0}^{l_k - 1} X^{Y^y_{j h L_k}}_{h L_k} - X^y_{j h L_k}.
  \end{equation}
  Note that, by \eqref{e:pave_lines} and the assumption that $F_m$ occurs, the points $Y^y_{j h L_k}$, $j=0,\ldots, l_k-1$, must belong to some $I_{m'}$ with $m' \in C_m$.
  When $A_{m'}$ does not occur, we can bound the corresponding difference in the right-hand side by $v_k h L_k$.
  Otherwise, we can use the occurrence of $F_{m'}$ to bound this difference by $3 h L_k$.
  Thus,
  \begin{equation}
  \label{e:displacement_control}
    \begin{split}
      X^y_{hL_{k + 1}} - \pi_1(y)
      & \overset{\eqref{e:three_fail}}\leq \big( l_k - 2 \big) v_k h L_k + 2 \cdot 3 h L_k\\
      & = v_k h L_{k + 1} + \Big( \frac{6 - 2v_k}{l_k} \Big) h L_{k + 1}\\
      & \overset{v_k > -1}< \Big( v_k + \frac{8}{l_k} \Big) h L_{k + 1} \overset{\eqref{e:v_k_o}}= v_{k + 1} h L_{k + 1}
    \end{split}
  \end{equation}
   which implies that $A_m$ does not occur.
\end{proof}

\begin{remark}
As shown in \eqref{e:displacement_control} the reason why we consider different speeds for each scale is to guarantee that: ``if the walker moves faster then $v_{k+1}$ inside a box at scale $L_{k+1}$ then it will move faster then $v_k$ inside well-separated boxes at scale $k$''.
This would not be necessarily true if we considered the same fixed speed for every scale.
\end{remark}

The previous step allows us to obtain an inductive inequality for the quantities $p_{hL_k}(v_k)$.

\nc{c:inductive}
\begin{lemma}
  \label{l:inductive}
  Suppose Assumptions~\ref{hyp:invariant}, \ref{hyp:v+v-} and \ref{hyp:FT} are satisfied, as well as the decoupling property $\mathscr{D}(\uc{c:decouple}, \alpha)$ with $\alpha > 5$. There exists $\uc{c:inductive} >0$  such that, given $v >v_+$,
    \begin{display}\label{e:hyprecp}
   if for some $h \geq 1$ and $k \geq \uck{k:speed} \vee \uc{c:inductive}$ we have
    $p_{hL_k}(v_k) \leq L_k^{-\alpha/2}$  then $p_{hL_{k + 1}}(v_{k + 1}) \leq L_{k + 1}^{-\alpha/2}$.
  \end{display}
\end{lemma}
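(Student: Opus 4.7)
The plan is to feed the cascade structure of Lemma~\ref{l:cascade} into the decoupling hypothesis $\mathscr{D}(\uc{c:decouple}, \alpha)$ in order to bound $p_{hL_{k+1}}(v_{k+1})$ essentially by $p_{hL_k}(v_k)^2$ times a combinatorial factor, plus a polynomial correction of order $(hL_k)^{-\alpha}$. Fix $h \geq 1$ and $m = (h, k+1, w) \in M^h_{k+1}$; by translation invariance it suffices to bound $\mathbb{P}(A_m(v_{k+1}))$ uniformly in $w$. Set $F := \bigcap_{m' \in C_m} F_{m'}$ and split
\[
\mathbb{P}(A_m(v_{k+1})) \leq \mathbb{P}(A_m(v_{k+1}) \cap F) + \mathbb{P}(F^{\mathsf{c}}).
\]
By \eqref{e:F_m_decay} and $|C_m| = 5l_k^2$, the second term is at most $5 l_k^2 \uc{c:F_m}^{-1} e^{-\uc{c:F_m} h L_k}$, hence super-polynomially small in $L_k$ and negligible against the target $L_{k+1}^{-\alpha/2}$.

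For the main term, Lemma~\ref{l:cascade} yields the inclusion
\[
A_m(v_{k+1}) \cap F \subseteq \bigcup_{\substack{m_1, m_2 \in C_m \\ d(B_{m_1}, B_{m_2}) \geq h L_k}} A_{m_1}(v_k) \cap F_{m_1} \cap A_{m_2}(v_k) \cap F_{m_2},
\]
a union over at most $(5 l_k^2)^2 = 25 l_k^4$ pairs of boxes with side lengths $\leq 5 h L_k$ and mutual time-distance $\geq h L_k$. The main obstacle I foresee is a measurability step: $A_{m_i}(v_k)$ is a priori not determined by the environment inside $B_{m_i}$ alone, since the walkers starting in $I_{m_i}$ could explore outside. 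Intersecting with $F_{m_i}$ is precisely what fixes this: then all relevant trajectories, together with their $\ell$-neighborhoods, stay inside $B_{m_i}$, so $A_{m_i}(v_k) \cap F_{m_i}$ is determined by the restrictions of $\eta$, $(T^x_i)$ and $(U^x_i)$ to $B_{m_i}$. After integrating out the $U^x_i$ (independent of $\eta, T$ and independent across the two disjoint boxes), the resulting functions of $\eta, T$ fall into the scope of Definition~\ref{d:decouple}, and applying $\mathscr{D}(\uc{c:decouple}, \alpha)$ with $r = h L_k$ yields
\[
\mathbb{P}(A_{m_1}(v_k) \cap F_{m_1} \cap A_{m_2}(v_k) \cap F_{m_2}) \leq p_{h L_k}(v_k)^2 + \uc{c:decouple}(h L_k)^{-\alpha}.
\]

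Plugging in the induction hypothesis $p_{hL_k}(v_k) \leq L_k^{-\alpha/2}$, using $l_k \leq L_k^{1/4}$ and $h \geq 1$, and summing over the $\leq 25 l_k^4$ admissible pairs, I expect to reach
\[
\mathbb{P}(A_m(v_{k+1}) \cap F) \leq 25 l_k^4 \bigl( L_k^{-\alpha} + \uc{c:decouple} (h L_k)^{-\alpha} \bigr) \leq 25 (1 + \uc{c:decouple}) L_k^{1 - \alpha}.
\]
Since \eqref{e:scale_round} gives $L_{k+1}^{-\alpha/2} \geq L_k^{-5 \alpha / 8}$, closing the induction amounts to the inequality $L_k^{1 - 3 \alpha / 8} \leq [50(1 + \uc{c:decouple})]^{-1}$, which holds for all sufficiently large $k$ whenever $\alpha > 8/3$, and is comfortably satisfied under the assumption $\alpha > 5$. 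Choosing $\uc{c:inductive}$ large enough so that this inequality, together with the exponential bound on $\mathbb{P}(F^{\mathsf{c}})$, sums to at most $L_{k+1}^{-\alpha/2}$, the induction step closes for all $k \geq \uck{k:speed} \vee \uc{c:inductive}$.
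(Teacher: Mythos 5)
Your proposal is correct and follows essentially the same route as the paper: split on the good event $\bigcap_{m'\in C_m}F_{m'}$, invoke Lemma~\ref{l:cascade} to reduce to a union over at most $25\,l_k^4$ well-separated pairs, apply $\mathscr{D}(\uc{c:decouple},\alpha)$ with $r=hL_k$, and close the induction via \eqref{e:scale_round} using $\alpha>8/3$. Your explicit intersection with $F_{m_i}$ before decoupling (to ensure $A_{m_i}(v_k)\cap F_{m_i}$ is supported in $B_{m_i}$) is a small refinement of a measurability point the paper's display leaves implicit, but it does not change the argument.
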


Note that the constant $\uc{c:inductive}$ is uniform in $h\geq 1$.

\begin{proof}
  Fix $m = (h,k+1,w) \in M^h_{k+1}$ and let $(m_1,m_2)_m$ denote the set of all pairs of indices $m_1$, $m_2$ in $C_m$ whose corresponding boxes $B_{m_1}$ and $B_{m_2}$ are separated by a time-distance at least equal to $h L_k$.
We perform the following sequence of inequalities, whose steps are justified below:
  \begin{equation}
  \label{e:inductive}
   \begin{split}
    \mathbb{P}(A_m(v_{k+1}))  &\leq \mathbb{P} \Big[A_m(v_{k+1}) \cap (\cap_{m' \in C_m} F_{m'}) \Big] + \mathbb{P} \Big[ \cup_{m' \in C_m} F^{\mathsf{c}}_{m'} \Big] \\
    & \leq 25\, l_k^4 \sup_{(m_1, m_2)_m} \mathbb{P} [ A_{m_1} (v_k)\cap A_{m_2}(v_k) ]
    + 5 l_k^2 \sup_{m' \in C_m} \mathbb{P}[F_{m'}^{\mathsf{c}}] \\
    & \leq 25\, l_k^4 \big(p_{hL_k}(v_k)^2+\uc{c:decouple}(h L_k)^{-\alpha}\big) + 5 l_k^2 \uc{c:F_m}^{-1} e^{ -\uc{c:F_m} h L_k }\\
    & \leq 25\, l_k^4 \big(p_{hL_k}(v_k)^2+c(h L_k)^{-\alpha}\big).
   \end{split}
  \end{equation}
 To obtain the second inequality we used \eqref{e:pave_number} and Lemma~\ref{l:cascade}.
 For the third inequality we employed the hypothesis that $\mathscr{D}(\uc{c:decouple}, \alpha)$ is satisfied in order to decouple $A_{m_1}$ and $A_{m_2}$ which is possible since they are supported in boxes that are well-separated vertically.
 We also used \eqref{e:F_m_decay}.

 Now, taking the supremum over all $m \in M^h_{k+1}$ in the LHS of \eqref{e:inductive} and dividing by $L^{-\alpha/2}_{k+1}$ we get
  \begin{equation}
    \begin{split}
      \frac{p_{hL_{k + 1}}(v_{k + 1})}{L_{k + 1}^{-\alpha / 2}} & \leq 25 L_{k + 1}^{\alpha / 2}\, l_k^4 \; \big( p_{hL_k}(v_k)^2 + c (h L_k)^{-\alpha} \big)\\
      & \leq 25 L_k^{5\alpha / 8 + 1} \big( L_k^{-\alpha} + c L_k^{-\alpha} \big) \leq c L_k^{1 - 3 \alpha / 8}\overset{\alpha>5}{\leq}cL_k^{-7/8},
    \end{split}
  \end{equation}
where we used \eqref{e:scale_round}, \eqref{e:hyprecp} and $h\geq 1$ in the second inequality. The right-hand side is bounded by $1$ as soon as $k \geq \uc{c:inductive}$ for a sufficiently large $\uc{c:inductive}$ depending on  $\uc{c:decouple}, \uc{c:F_m}$.
\end{proof}

We are now ready to conclude the proof of Lemma~\ref{l:deviation_reduction}.
\begin{proof}[Proof of Lemma~\ref{l:deviation_reduction}]
Assume $v >v_+$ and fix $\uck{c:k_bar} = \uck{c:k_bar}(v):=\uck{k:speed}(v)\vee\uc{c:inductive}$.
 Since $v_{\uck{c:k_bar}} > v_+$, we have
  \begin{equation}
    \liminf_{h \to \infty} p_{hL_{\uck{c:k_bar}}}(v_{\uck{c:k_bar}}) = 0.
  \end{equation}
 Therefore, we can fix $\uc{c:h_hat}(v)\geq 1$ for which
  \begin{equation}
    p_{\uc{c:h_hat}L_{\uck{c:k_bar}}}(v_{\uck{c:k_bar}}) \leq L_{\uck{c:k_bar}}^{-\alpha / 2}.
  \end{equation}
Now we can use Lemma~\ref{l:inductive} to obtain recursively
  \begin{equation}
    p_{\uc{c:h_hat}L_k}(v) \leq p_{\uc{c:h_hat}L_k}(v_k) \leq L_k^{-\alpha / 2}
  \end{equation}
  for every $k \geq \uck{c:k_bar}$.
This proves Lemma~\ref{l:deviation_reduction}.

\end{proof}

\subsection{Proof of Lemma~\ref{l:deviation_interp}}
\label{s:proof_deviation_interp}

With Lemma~\ref{l:deviation_reduction} at hand, we just need an interpolation argument to establish Lemma~\ref{l:deviation_interp}. Let $v=v_++\epsilon$, $v' = (v_+ + v)/2$ and let $\uc{c:h_hat}(v')$ and $\uck{c:k_bar}(v')$ be as in Lemma~\ref{l:deviation_reduction}. For $H \in \mathbb{Z}_+$ let us define $\bar{k}$ as being the integer that satisfies:
\begin{equation}
 \label{e:change_scale}
  (\uc{c:h_hat} L_{\bar{k}})^{11/10} \leq H < (\uc{c:h_hat} L_{\bar{k} + 1})^{11/10}.
\end{equation}
Above, the choice $11/10$ for the exponent does not play an important role and it could be replaced by any number bigger than one.
Let us first assume that $H$ is sufficiently large so that $\bar{k} \geq \uck{c:k_bar}$ (which is possible, by \eqref{e:change_scale}).
Therefore, we can apply Lemma~\ref{l:deviation_reduction} to conclude that
\begin{equation}\label{e:deviation_reduction}
p_{\uc{c:h_hat}L_{\bar{k}}}(v')\leq L_{\bar{k}}^{-\alpha/2}.
\end{equation}

Now, in order to bound $p_H(v)$, we are going to start by fixing some $w \in [0, 1) \times \{0\}$ and pave the box $B^1_H( w)$ with boxes $B_m$ with $m \in M_{\bar{k}}^{\uc{c:h_hat}}$.
The set of indices of boxes used for such a paving is
\begin{equation}
  M = \big\{ m = (\uc{c:h_hat}, \bar{k}, \bar{w}) \in M^{\uc{c:h_hat}}_{\bar{k}} \colon \bar{w} \in\uc{c:h_hat} L_{\bar{k}}\,\mathbb{Z}^2 \text{ and } B_m \cap B^1_H( w) \neq \varnothing \big\},
\end{equation}
which satisfies
\begin{equation}\label{e:cardinalM}
  |M| \leq 6 \Big( \frac{H}{\uc{c:h_hat} L_{\bar{k}}} \Big)^2
  \overset{\eqref{e:change_scale}}\leq 6 \Big( \frac{(\uc{c:h_hat} L_{\bar{k} + 1})^{11/10}}{\uc{c:h_hat} L_{\bar{k}}} \Big)^2 \overset{\eqref{e:scale_round}}\leq c(v) L_{\bar{k}}^{3/4}.
\end{equation}

An important observation at this point is that, on the event $\cap_{m\in M} (A_m(v'))^{\mathsf{c}}$, for any $y \in I^1_H( w)$ the displacement of $X^y$ up to time ${\lfloor H/\uc{c:h_hat}  L_{\bar{k}} \rfloor \uc{c:h_hat} L_{\bar{k}}}$ can be bounded by
\begin{equation}
 \begin{split}
  X^y_{\lfloor H/\uc{c:h_hat} L_{\bar{k}} \rfloor \uc{c:h_hat} L_{\bar{k}}} - \pi_1(y)
  & = \sum_{j = 0}^{\lfloor H/\uc{c:h_hat} L_{\bar{k}} \rfloor - 1}
  X^{Y^y_{j \uc{c:h_hat} L_{\bar{k}}}}_{\uc{c:h_hat} L_{\bar{k}}} - X^y_{j \uc{c:h_hat} L_{\bar{k}}} \\
  & \leq v' \lfloor H/\uc{c:h_hat} L_{\bar{k}} \rfloor \uc{c:h_hat} L_{\bar{k}} \leq v' H.
 \end{split}
\end{equation}
where we used that $A_m(v')$ does not occur for any $m \in M$ and that each point $X^y_{j \uc{c:h_hat} L_{\bar{k}}}$ belongs to $I_m \cap \mathbb{L}$ for some $m \in M$.

Note that ${\lfloor H/\uc{c:h_hat}  L_{\bar{k}} \rfloor \uc{c:h_hat} L_{\bar{k}}}$ is approximately equal to $H$, but not exactly.
Therefore, we still need to bound the probability that the random walk has a large displacement between times $\lfloor H/\uc{c:h_hat} L_{\bar{k}} \rfloor \uc{c:h_hat} L_{\bar{k}}$ and $H$.
But, in fact, Assumption~\ref{hyp:FT} shows that for any $y\in\mathbb{L}$  
  \begin{equation*}
  \begin{split}
    \mathbb{P} & \big[ X^y_{H} -  X^y_{\lfloor H/\uc{c:h_hat} L_{\bar{k}} \rfloor \uc{c:h_hat} L_{\bar{k}}} \geq (v_+ - v') H \big] \\
  & \leq 4H\,\mathbb{P} \Big[ \exists\ \text{ allowed path $\gamma \in \mathcal{D}_{\rm n.n.}\big([0,H-\lfloor H/\uc{c:h_hat}  L_{\bar{k}} \rfloor \uc{c:h_hat} L_{\bar{k}}],\mathbb{L}\big)$ s.t.\ $\gamma(0)=0$ and}\\
&\ \gamma\big(H-\lfloor H/\uc{c:h_hat}  L_{\bar{k}} \rfloor \uc{c:h_hat} L_{\bar{k}}\big)\geq (v_+-v')H\Big]
+ c^{-1}e^{-c\lfloor H/\uc{c:h_hat}  L_{\bar{k}} \rfloor \uc{c:h_hat} L_{\bar{k}}}\\
& \overset{\eqref{e:FT}}\leq c(v)^{-1}He^{-c(v)L_{\bar{k}}},
  \end{split}
  \end{equation*}
as soon as $ (v_+-v')H\geq 2\uc{c:h_hat}L_{\bar{k}}$. Above, in the first inequality we used Assumption~\ref{hyp:FT} to find a union bound on the possible positions of  $X^y_{\lfloor H/\uc{c:h_hat}  L_{\bar{k}} \rfloor \uc{c:h_hat} L_{\bar{k}}}$, then translation invariance of $\mathbb{P}$.

Joining the two last estimates, we get for large enough $H$
\begin{equation}
  \begin{split}
    \mathbb{P} \big(A_{H,w}(v)\big)
    & = \mathbb{P} \big[X^y_{H} - y\geq v H\text{ for some } y \in I^1_H(w) \cap \mathbb{L} \big]\\
    & \leq \mathbb{P} [A_m(v') \text{ occurs for some $m \in M$}] + c^{-1}H^2 \exp\{-c L_{\bar{k}}\}\\
    & \overset{\eqref{e:deviation_reduction},\eqref{e:cardinalM}}\leq c L_{\bar{k}}^{3/4} L_{\bar{k}}^{-\alpha / 2} + c^{-1}\exp\{-cL_{\bar{k}}\}\\
    & \overset{\alpha > 5}\leq c L_{\bar{k}}^{-7 \alpha / 20} \overset{\eqref{e:change_scale}}{\leq} c H^{-\alpha/4}.
  \end{split}
\end{equation}
The conclusion of Lemma~\ref{l:deviation_interp} now follows by taking the supremum over all $w \in [0,1) \times \{0\}$ and then properly choosing the constant $\uc{c:deviation}$ in order to accommodate small values of $H$.

\section{Threats on the upper speed}
\label{s:threat_points}

As we discussed above, we want to show that $v_+ = v_-$ arguing by contradiction: If $v_+>v_-$, then spending a significant proportion of its time moving with speed close to $v_-$ will prevent the random walker to attain an average speed close to $v_+$ over long interval of times.
This contradicts the very definition of $v_+$.
The main goal of the present section is to prove preliminary results that will be used to formalize this argument in the next section.

Let us define
\begin{equation}
  \label{e:delta}
\delta:=\frac{v_+ - v_-}{4}.
\end{equation}
Note $\delta \in (0, 1/2]$, since we argue by contradiction and assume that $v_+ > v_-$.

\subsection{Trapped points}

\begin{definition}
  Given $H \geq 1$ and $\delta$ as in \eqref{e:delta}, we say that a point $w \in \mathbb{R}^2$ is $H$-trapped if there exists some $y \in \big(w + [\delta H, 2 \delta H] \times \{0\} \big) \cap \mathbb{L}$ such that
  \begin{equation}
  \label{e:trapped_point}
    X^y_H - \pi_1(y) \leq (v_- + \delta)H.
  \end{equation}
  Note that this definition applies to points $w \in \mathbb{R}^2$ that do not necessarily belong to $\mathbb{L}$.
\end{definition}

The key fact behind the above definition is the following: if $w$ is trapped, then starting from a nearby space-time point to the right of $w$, the random walker will be delayed in the near future (in the sense that its average speed will be bounded away from $v_+$).
In fact, according to \eqref{e:monotone}, if $w$ is $H$-trapped, then for every $w' \in \big( w + [0, \delta H] \times \{0\} \big) \cap \mathbb{L}$, we have
\begin{equation}
  \label{e:trapped_slow}
  X^{w'}_H - \pi_1(w') \leq X^y_H - \pi_1(y) + 2 \delta H \leq (v_- + 3 \delta) H = (v_+ - \delta) H,
\end{equation}
where $y$ is any point in $\big(w + [\delta H, 2 \delta H] \times \{0\} \big) \cap \mathbb{L}$ satisfying \eqref{e:trapped_point}.

The implicit definition of $v_-$ guarantees that a point is trapped with uniform positive probability in the following sense:

\nc{c:some_trapped}
\nc{c:H_lower}
\begin{lemma}
  \label{l:some_trapped}
  There exist constants $\uc{c:some_trapped}>0$ and $\uc{c:H_lower}>4/\delta$ (depending on the value of $\delta$ given  in \eqref{e:delta}), such that
  \begin{equation}
    \label{e:some_trapped}
    \inf_{H \geq \uc{c:H_lower}} \;\; \inf_{w \in [0, 1) \times \{0\}} \mathbb{P} \big[ \text{$w$ is $H$-trapped} \big] \geq \uc{c:some_trapped}.
  \end{equation}
\end{lemma}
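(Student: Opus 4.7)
My plan is to derive the uniform lower bound directly from the implicit definition of $v_-$ in \eqref{e:v_-}, together with the integer spatial translation invariance contained in Assumption~\ref{hyp:invariant}. I would proceed in three steps.

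First, I would extract a quantitative bound from the definition of $v_-$. Since we argue by contradiction and assume $v_+>v_-$, we have $\delta>0$ and hence $v_-+\delta>v_-$. By \eqref{e:v_-}, the value $v_-+\delta$ cannot belong to the set whose supremum defines $v_-$, so $\liminf_{H\to\infty}\tilde p_H(v_-+\delta)>0$. Therefore there exist $\epsilon_0>0$ and $H_1$ such that, for every $H\ge H_1$, one can pick $w_H\in[0,1)\times\{0\}$ with $\mathbb{P}[\tilde A_{H,w_H}(v_-+\delta)]\ge \epsilon_0$. Unwinding \eqref{e:tilde_A}, this says that at every sufficiently large scale, some integer starting point inside an interval of length $H$ has average speed at most $v_-+\delta$ with uniformly positive probability.

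Second, I would localize that slow starting point to a window of length of order $\delta H$. Setting $N:=\lfloor\delta H\rfloor$, I plan to cover the length-$H$ strip of starting points in $\tilde A_{H,w_H}(v_-+\delta)$ by $K=O(1/\delta)$ integer translates of a block of $N$ consecutive integers. Writing $E^{(n)}$ for the event that some $k\in\{n,\dots,n+N-1\}$ satisfies $X^{(k,0)}_H-k\le(v_-+\delta)H$, Assumption~\ref{hyp:invariant} restricted to integer spatial translations forces $\mathbb{P}[E^{(n)}]$ to be independent of $n$; call this common value $q$. A union bound over the $K$ translates, combined with the previous step, then gives $Kq\ge\epsilon_0$, and hence $q\ge c\,\epsilon_0\delta$ for some universal $c>0$.

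Third, I would identify the trapped event with one of these $E^{(n)}$'s. For any $w\in[0,1)\times\{0\}$, the interval $w+[\delta H,2\delta H]$ has length $\delta H$ and, as soon as $H\ge 4/\delta$, contains $N$ consecutive integers $\{n(w),\dots,n(w)+N-1\}$; by \eqref{e:trapped_point} we then have $\{w\text{ is }H\text{-trapped}\}\supseteq E^{(n(w))}$, and integer translation invariance yields $\mathbb{P}[w\text{ is }H\text{-trapped}]\ge q\ge c\,\epsilon_0\delta$ uniformly in $w$. The conclusion follows with $\uc{c:some_trapped}:=c\,\epsilon_0\delta$ and $\uc{c:H_lower}:=H_1\vee(4/\delta)$.

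The main technical nuisance will be the mismatch between the continuous reference point $w$ and the integer lattice of possible walker starting positions: the length-$\delta H$ blocks we want to work with do not in general tile a length-$H$ interval perfectly and their endpoints are generically not integer. I will handle this by tiling with $\lfloor\delta H\rfloor$-length blocks of consecutive integers and absorbing the resulting overcount into the $O(1/\delta)$ factor in the union bound, which is precisely what propagates into the final constants $\uc{c:some_trapped}$ and $\uc{c:H_lower}>4/\delta$.
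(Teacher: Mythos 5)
Your proposal is correct and follows essentially the same route as the paper: extract $\liminf_{H\to\infty}\tilde p_H(v_-+\delta)>0$ from the definition of $v_-$, localize the slow starting point via a union bound over $O(1/\delta)$ sub-blocks of the length-$H$ interval (the paper uses sub-intervals of length $\delta H/2$, you use blocks of $\lfloor\delta H\rfloor$ consecutive integers), and then use translation invariance to relocate that block into $w+[\delta H,2\delta H]$. The bookkeeping with non-integer endpoints that you flag is handled correctly, and your constants $\uc{c:some_trapped}=c\,\epsilon_0\delta$ and $\uc{c:H_lower}=H_1\vee(4/\delta)$ match the paper's in spirit.
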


\begin{proof}
Since $v_-+\delta>v_-$, the definition of $v_-$ implies that
\begin{equation}
\uc{c:some_trapped} := \frac{1}{2} \Big\lceil \frac{2}{\delta} \Big\rceil^{-1} \liminf_{H\to\infty} \tilde{p}_{H}(v_-+\delta)>0.
\end{equation}
In particular, there exists $\uc{c:H_lower}>4/\delta$ such that
\begin{equation}
\Big\lceil \frac{2}{\delta} \Big\rceil^{-1} \inf_{H\geq \uc{c:H_lower}} \tilde{p}_H(v_-+\delta) \geq \uc{c:some_trapped}.
\end{equation}
Then, for each $H > \uc{c:H_lower}$ we have
  \begin{equation*}
    \begin{split}
     \uc{c:some_trapped} & \leq \Big\lceil \frac{2}{\delta} \Big\rceil^{-1}
        \sup_{w \in [0, 1) \times \{0\}}
      \mathbb{P} \Big[
      \begin{array}{c}
        \text{there exists $y \in (w + [0, H) \times \{0\})\cap \mathbb{L}$}\\
         \text{such that $X^y_{H} - \pi_1(y) \leq (v_- + \delta) H$}
      \end{array}
      \Big]\\
      & \leq  \sup_{w \in [0, 1) \times \{0\}}
      \mathbb{P} \Big[
      \begin{array}{c}
        \text{there exists $y \in (w + [0, (\delta/2) H) \times \{0\})\cap \mathbb{L}$}\\
         \text{such that $X^y_{H} - \pi_1(y) \leq (v_- + \delta) H$}
      \end{array}
      \Big]\\
      & \leq \inf_{w \in [0, 1) \times \{0\}}
      \mathbb{P} \Big[
      \begin{array}{c}
        \text{there exists $y \in (w + [0, \delta H) \times \{0\})\cap \mathbb{L}$}\\
         \text{ such that $X^y_{H} - \pi_1(y) \leq (v_- + \delta) H$}
      \end{array}
      \Big] \\
      & = \inf_{w \in [0, 1) \times \{0\}}
      \mathbb{P} \Big[
      \begin{array}{c}
        \text{there exists $y \in (w + [\delta H, 2 \delta H) \times \{0\})\cap \mathbb{L}$}\\
         \text{ such that $X^y_{H} - \pi_1(y) \leq (v_- + \delta) H$}
      \end{array}
      \Big].
    \end{split}
  \end{equation*}

    In the second inequality we have split $[0, H)$ into intervals of length $\delta H/2$ and used a union bound.
  For the third inequality, we used translation invariance and the fact that $\delta H > 4$ (since $\uc{c:H_lower} > 4/\delta$) which implies that, for any $w\in[0,1)\times\{0\}$, the interval $w + [0, (\delta/2) H)\times \{0\}$ is contained in every interval $w'+[0, \delta H)\times \{0\}$ with $w'\in [-1,0)\times \{0\}$.
  Translation invariance was also used to obtain the last equality.

\end{proof}

The lemma above is a good step towards the proof that the walker will not be able to attain average speed close to $v_+$ over long time periods.
Indeed, one could think of the set of $H$-trapped points as a percolation-type environment of obstacles.
Every time the random walker passes next to such an obstacle it will be delayed up to time $H$.
Furthermore, if the probability that a point is trapped could be made very high, then every allowed path would have to approach these obstacles at time scales smaller than $H$ and we would be done.
However, Lemma \ref{l:some_trapped} only assures that this  probability is positive and it could, in principle, be very small.
Therefore, the random walker could always avoid these trapped points, or it could spend only a negligible fraction of the time next to them.

For this reason we introduce a more elaborate way of delaying the random walker.
Given a reference space-time point we look for the existence of at least one trap lying along a line segment with slope $v_+$ starting from this point (see Figure~\ref{f:threatened}).
If we are successful, the reference point is called a \emph{threatened point}.
As one would expect, the probability that a point is threatened becomes very high as we increase the length of the segment.
The key observation is that, if the random walk starts at a threatened point, it will most likely end up finishing to the left of the tip of the segment (see Figure \ref{f:threatened}), that is, it will be delayed with respect to $v_+$.
The details will be presented in the following section.

\subsection{Threatened points}

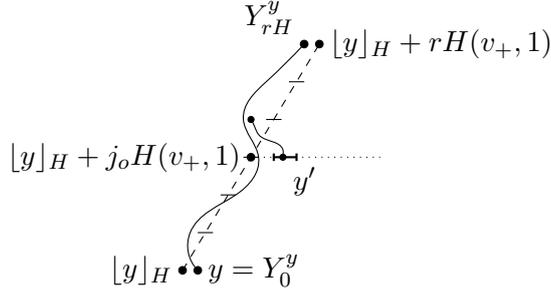
\begin{figure}
  \begin{center}
    \begin{tikzpicture}[use Hobby shortcut]
      \draw[dashed] (4.4, 0) -- (6.2, 3);
      \foreach \x in {1,...,5}
      { \draw[fill] (4.3 + \x / 3.3333, \x / 2) -- (4.5 + \x / 3.33, \x / 2);
      }
      \draw[fill, left] (5.3, 1.5) circle (.05) node {$\lfloor y \rfloor_{H} + j_o H(v_+, 1)$};
      \draw[dotted] (5.3, 1.5) -- (7, 1.5);
      \draw[thick] (5.6, 1.5) -- (5.9, 1.5);
      \draw[thick] (5.6, 1.45) -- (5.6, 1.55);
      \draw[thick] (5.9, 1.45) -- (5.9, 1.55);
      \draw[fill, left, right] (4.6, 0) circle (.05) node {$y = Y^y_0$};
      \draw[fill, left] (4.4, 0) circle (.05) node {$\lfloor y \rfloor_{H}$};
      \draw[fill, right] (6.2, 3) circle (.05) node {$\lfloor y \rfloor_{H} + rH (v_+, 1)$};
      \draw[fill, above left] (6, 3) circle (.05) node {$Y^y_{r H}$};
      \draw (5.7, 1.5) .. (5.72, 1.6) .. (5.6, 1.7) .. (5.4, 1.8) .. (5.35, 1.9) .. (5.3, 2);
      \draw[fill, below right] (5.72, 1.5) circle (.04) node {$y'$};
      \draw[fill, below right] (5.3, 2) circle (.04);
      \draw (4.6, 0) .. (4.5, .5) .. (5.1, 1) .. (5.4, 1.5) .. (5.2, 2) .. (5.5, 2.5) .. (6, 3);
    \end{tikzpicture}
    \caption{The point $\lfloor y \rfloor_{H}$ is $(H, r)$-threatened, since $\lfloor y \rfloor_{H} + j_o H (v_+, 1)$ is $H$-trapped.}
    \label{f:threatened}
  \end{center}
\end{figure}

\begin{definition}
  \label{d:threatened}
  Given $\delta$ as in \eqref{e:delta}, $H \geq 1$ and some integer $r \geq 1$, we say that a point $w \in \mathbb{L}$ is $(H, r)$-threatened if $w + j H (v_+, 1)$ is $H$-trapped for some $j = 0, \dots, r-1$.
\end{definition}

As we are going to show below, being on a threatened point will most likely impose a delay to the walker similarly to being next to trapped points.
Before proving this, we will introduce a notation for rounding of points in $\mathbb{L}$.

For $y = (x, t) \in \mathbb{L}$ and $H > 4 \delta^{-1}$, we define
\begin{equation}
\label{e:round_H}
  \big\lfloor y \big\rfloor_H = \Big( \Big\lfloor \frac{x}{\widetilde{H}} \Big\rfloor \widetilde{H} , t \Big), \text{ where $\widetilde{H} = {\lfloor \delta H/4 \rfloor}$},
\end{equation}
which is the closest point to the left of $y$ in the set $\widetilde{H} \mathbb{L}$ where the spatial coordinates are rescaled by $\lfloor \delta H/4 \rfloor$ .
Note that $\widetilde{H}$ is an integer so $\lfloor y \rfloor_H \in \mathbb{L}$.
Recall that the constant $\uc{c:H_lower}$ that was introduced in Lemma~\ref{l:some_trapped} was chosen in such a way that $\uc{c:H_lower} > 4 \delta^{-1}$, so that the rounding in \eqref{e:round_H} can be used for any fixed $H \geq \uc{c:H_lower}$.

Before we continue, let us briefly explain the reason why we introduce the above rounding.
In what follows, we will need to prove that there exist many threatened points within certain boxes.
However, in order to obtain a union bound that is uniform over $H$, we will only look for such points in a certain sub-lattice contained in $(\widetilde{H}\mathbb{Z})\times\mathbb{R}$.
This will become more clear in \eqref{e:all_are_threatened}, see also Remark~\ref{r:rounding}.

\medskip

Recall the definition of the scale sequence $(L_k)$, in \eqref{e:L_k}.
As promised above, the next (deterministic) lemma states that, once the walker gets next to a threatened point, either it runs faster than $v_+$ for a certain time interval (in order to overshoot the nearby trap) or else it will ultimately be delayed with respect to $v_+$.
See Figure \ref{f:threatened} for an illustration.

\begin{lemma}
  \label{l:delays}
  For any positive integer $r $ and any real number $H\geq \uc{c:H_lower}$, if we start the walker at some $y \in \mathbb{L}$ such that
  \begin{equation}
    \big\lfloor y \big\rfloor_{H} \text{ is $(H, r)$-threatened},
  \end{equation}
  then either
  \begin{enumerate}
  \item ``the walker runs faster than $v_+$ for some time interval of length $H$'', that is,
    \begin{equation}
      \label{e:speedup}
      X_{(j + 1)H}^y - X_{jH}^y
      \geq \Big( v_+ + \frac{\delta}{2 r} \Big)H \quad \text{ for some $j = 0, \dots, r - 1$,}
    \end{equation}
  \item or else, ``it will be delayed'', that is,
    \begin{equation}
      \label{e:delay}
      X^y_{rH} - \pi_1(y)
      \leq \Big( v_+ - \frac{\delta}{2 r} \Big) r H.
    \end{equation}
  \end{enumerate}
\end{lemma}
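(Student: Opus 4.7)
The proof will proceed by contraposition: I will assume that \eqref{e:speedup} fails, i.e.\ for every $j \in \{0,\ldots,r-1\}$ one has $X^y_{(j+1)H} - X^y_{jH} < (v_+ + \delta/(2r))H$, and derive the delay bound \eqref{e:delay}. Let $j_0 \in \{0,\ldots,r-1\}$ be the index witnessing the threat, set $w := \lfloor y \rfloor_H + j_0 H(v_+,1)$, and let $y^* \in (w + [\delta H, 2\delta H]\times\{0\})\cap \mathbb{L}$ be the point witnessing that $w$ is $H$-trapped, so that $X^{y^*}_H - \pi_1(y^*) \le (v_- + \delta)H$. Note that by construction $\pi_1(y^*)$ lies in the integer interval $[\pi_1(w)+\delta H, \pi_1(w)+2\delta H]$ and $\pi_2(y^*) = \pi_2(y)+j_0 H$.

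The key step is to show that the walker is to the left of $y^*$ at time $j_0 H$, so that monotonicity can transfer the slow motion witnessed by $y^*$. Summing the assumed upper bounds for $j = 0, \ldots, j_0-1$ and using $j_0 \le r$ gives $X^y_{j_0 H} - \pi_1(y) < j_0 v_+ H + \delta H/2$. Since by definition of $\lfloor\cdot\rfloor_H$ one has $\pi_1(y) - \pi_1(\lfloor y\rfloor_H) \le \widetilde{H} \le \delta H/4$, this yields
\[
X^y_{j_0 H} < \pi_1(\lfloor y\rfloor_H) + j_0 v_+ H + 3\delta H/4 = \pi_1(w) + 3\delta H/4 < \pi_1(y^*).
\]
Because both $X^y_{j_0 H}$ and $\pi_1(y^*)$ are integers, we conclude $X^y_{j_0 H} \le \pi_1(y^*)$; since $\pi_2(Y^y_{j_0 H}) = \pi_2(y^*)$, the monotonicity property \eqref{e:monotone} applied to the two walkers starting at $Y^y_{j_0 H}$ and at $y^*$ gives
\[
X^y_{(j_0+1)H} \le X^{y^*}_H \le \pi_1(y^*) + (v_- + \delta)H \le \pi_1(w) + (v_- + 3\delta)H = \pi_1(\lfloor y\rfloor_H) + (j_0+1)v_+ H - \delta H,
\]
where the last equality uses $v_- = v_+ - 4\delta$ from \eqref{e:delta}.

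Finally, applying the assumed upper bounds for $j = j_0+1, \ldots, r-1$ (there are $r-1-j_0 < r$ such terms) and telescoping, one gets
\[
X^y_{rH} - \pi_1(\lfloor y\rfloor_H) < (j_0+1)v_+ H - \delta H + (r - 1 - j_0)\Bigl(v_+ + \tfrac{\delta}{2r}\Bigr)H \le r v_+ H - \tfrac{\delta H}{2}.
\]
Subtracting $\pi_1(y) - \pi_1(\lfloor y\rfloor_H) \ge 0$ from the left-hand side preserves the inequality, so $X^y_{rH} - \pi_1(y) \le (v_+ - \delta/(2r)) r H$, which is exactly \eqref{e:delay}.

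The main obstacle is precisely the second paragraph above: one has to ensure that the walker's position at the critical time $j_0 H$ has not already outpaced the trap witness $y^*$, so that monotonicity can be invoked. The quantitative cushion $\delta H/4$ coming from the rounding $\lfloor\cdot\rfloor_H$ (combined with the fact that $j_0/r$ times $\delta/2$ never exceeds $\delta/2$) is what makes the comparison $X^y_{j_0 H} < \pi_1(y^*)$ go through; this is the reason rounding was introduced in \eqref{e:round_H} and why traps are placed on the offset interval $[\delta H, 2\delta H]$ rather than abutting $w$.
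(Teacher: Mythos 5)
Your proof is correct and follows essentially the same route as the paper's: assume the speed-up \eqref{e:speedup} fails, use the accumulated bound together with the $\delta H/4$ rounding cushion to place $Y^y_{j_0H}$ to the left of the trap witness, invoke monotonicity \eqref{e:monotone} to inherit the $\delta H$ deficit over the interval $[j_0H,(j_0+1)H]$, and telescope the remaining increments. The only cosmetic difference is that you track displacements relative to $\pi_1(\lfloor y\rfloor_H)$ and convert to $\pi_1(y)$ at the end, whereas the paper absorbs the rounding error into the lower bound $\tfrac34\delta H \leq \pi_1(y')-(\pi_1(y)+j_0Hv_+)$ from the start.
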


\begin{proof}
Fix $r \geq 1$ and $H\geq \uc{c:H_lower}$ as in the statement.
Assume that the point $\lfloor y \rfloor_{H}$ is $(H, r)$-threatened.
  Thus, for some $j_o \in \{ 0, \dots, r - 1\}$,  \begin{equation}
    \lfloor y \rfloor_{H} + j_o H (v_+, 1) \text{ is $H$-trapped}
  \end{equation}
  or, in other words, there exists a point
  \begin{equation}
    \label{e:location_y'}
    y' \in \Big( \big( \lfloor y \rfloor_{H} + j_o H (v_+, 1) \big) + [\delta H, 2 \delta H] \times \{ 0 \} \Big) \cap \mathbb{L}
  \end{equation}
  such that
  \begin{equation}
    \label{e:delay_y'}
    X^{y'}_{ H} - \pi_1(y') \leq (v_- + \delta) H= (v_+ - 3 \delta) H.
  \end{equation}
  Fix such a point $y'$ and notice from \eqref{e:location_y'} that,
  \begin{equation}\label{e:location_y'_2}
    \frac{3}{4} \delta H \leq
    \pi_1(y') - \big( \pi_1(y) + j_o Hv_+ \big)
    \leq 2 \delta H.
  \end{equation}

We now assume that \eqref{e:speedup} does not hold and bound the horizontal displacement of the random walk in three steps: before time $j_o H$, between times $j_o H$ and $(j_o + 1) H$ and from time $(j_o + 1) H$ to time $r H$.

  \begin{equation*}
    \begin{split}
      X^y_{j_oH} - \pi_1(y)
      & \leq \sum_{j = 0}^{j_o - 1} X^y_{(j+1) H} - X^y_{j H}\\
      & \overset{\neg\eqref{e:speedup}}\leq j_o \Big( v_+ + \frac{\delta}{2 r} \Big)H
     \leq j_o v_+H + \frac{\delta}{2 } H\\
      & \leq j_o v_+H+\frac{3}{4}\delta H.
    \end{split}
  \end{equation*}
  So, by \eqref{e:location_y'_2}, $Y^y_{j_o H}$ lies to the left of $y'$ and, by monotonicity, \eqref{e:delay_y'} and \eqref{e:location_y'_2} we have that
  \begin{equation}
    \begin{split}
      X^y_{(j_o + 1) H} & \leq X^{y'}_{H} \leq \pi_1(y') + (v_+ - 3 \delta) H\\
      & \leq \pi_1(y) + j_o v_+ H + 2 \delta H+ (v_+ - 3 \delta) H\\
      & \leq \pi_1(y) + (j_o + 1) v_+ H - \delta H.
    \end{split}
  \end{equation}
  Now applying once more the assumption that \eqref{e:speedup} does not hold, for $j = j_o, \ldots, r-1$, we can bound the overall displacement of the random walk up to time $rH$:
  \begin{equation}
    \begin{split}
      X^y_{r H} - \pi_1(y)
      & \leq \big( X_{r H}^y - X_{(j_o + 1) H}^y \big) + \big( X_{(j_o + 1) H}^y - \pi_1(y) \big)\\
      & \leq (r - j_o - 1) \Big( v_+ + \frac{\delta}{2r} \Big) H + (j_o + 1) v_+ H - \delta H\\
      & \leq r v_+ H - \frac{\delta}{2} H = \Big( v_+ - \frac{\delta}{2 r} \Big) r H,
    \end{split}
  \end{equation}
  showing that \eqref{e:delay} holds and thus proving the result.
\end{proof}

\subsection{Density of threatened points}
The next lemma is the main result of this section.
\nc{c:threatened}
\begin{lemma}[Threatened points]
  \label{l:threatened}
 Assume that $\mathbb{P}$ satisfies Assumptions~\ref{hyp:invariant}, \ref{hyp:v+v-} and \ref{hyp:FT}, as well as $\mathscr{D}(\uc{c:decouple},\alpha)$ for some $\alpha \geq 1$ and let $\delta$ be as defined in \eqref{e:delta}.
  There exists $\uc{c:threatened} = \uc{c:threatened}(\delta) > 0$ such that for every $H\geq\uc{c:H_lower}$
  \begin{equation}
    \mathbb{P} \big[ \text{$0$ is not $(H, r)$-threatened} \big] \leq \uc{c:threatened} r^{-\alpha},
  \end{equation}
  for any $r \geq 1$.
  Note that the above bound is uniform on $H \geq \uc{c:H_lower}$.
\end{lemma}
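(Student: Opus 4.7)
The idea is to express ``$0$ is not $(H,r)$-threatened'' as an intersection of $r$ events ``$jH(v_+,1)$ is not $H$-trapped'', to localize each event to a space-time box of side length $O(H)$, and then to bootstrap the uniform positive lower bound on the trap probability coming from Lemma~\ref{l:some_trapped} into polynomial decay through a multiscale decoupling argument in the same spirit as Lemma~\ref{l:inductive}.

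For $w\in\mathbb{R}^2$ and $j\geq 0$, let $\bar{A}_j(w)$ denote the event that $w+jH(v_+,1)$ is not $H$-trapped, and let $B_j(w)$ be a space-time box of horizontal side length $5H$ and vertical side length $H$ positioned so as to contain the starting positions $w+jH(v_+,1)+[\delta H,2\delta H]\times\{0\}$ together with an $\ell$-neighborhood of them. Mimicking \eqref{e:boundedspeed}--\eqref{e:F_m_decay}, Assumption~\ref{hyp:FT} provides an event $F_j(w)$ on which every walker starting at a point of $(w+jH(v_+,1)+[\delta H,2\delta H]\times\{0\})\cap\mathbb{L}$ stays, together with its $\ell$-neighborhood, inside $B_j(w)$ up to time $\pi_2(w)+(j+1)H$; and $\mathbb{P}(F_j(w)^{\mathsf{c}})\leq c^{-1}e^{-cH}$. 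Define
\[
A_j^*(w) \;:=\; \bigl(\bar{A}_j(w)\cap F_j(w)\bigr)\cup F_j(w)^{\mathsf{c}}.
\]
Then $A_j^*(w)\supseteq\bar{A}_j(w)$, and since on $F_j(w)$ the trajectories appearing in the definition of $\bar{A}_j$ are fully determined by the environment and jump data inside $B_j(w)$, $A_j^*(w)$ is measurable with respect to the $\sigma$-algebra generated by that data. Combining Lemma~\ref{l:some_trapped} with the tail bound on $F_j^{\mathsf{c}}$ yields $\mathbb{P}(A_j^*(w))\leq 1-\uc{c:some_trapped}/2$ provided $H$ is sufficiently large, which we may assume by replacing $\uc{c:H_lower}$ with a larger constant depending only on $\delta$ if needed.

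Set $q(r):=\sup_{w\in\mathbb{R}^2}\mathbb{P}\bigl[\bigcap_{j=0}^{r-1} A_j^*(w)\bigr]$, which is non-increasing in $r$ and dominates the left-hand side of the lemma. The core of the proof is the recursion
\[
q(5r)\;\leq\;q(r)^2 + c\,(rH)^{-\alpha},\qquad r\geq 1.
\]
To prove it, note that $\bigcap_{j=0}^{5r-1}A_j^*(w)\subseteq\bigcap_{j=0}^{r-1}A_j^*(w)\cap\bigcap_{j=4r}^{5r-1}A_j^*(w)$; the first and second intersections are measurable in space-time boxes whose side lengths are both at most $(r+5)H$ and whose vertical projections are $[\pi_2(w),\pi_2(w)+rH]$ and $[\pi_2(w)+4rH,\pi_2(w)+5rH]$, respectively. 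Their time-distance $3rH$ exceeds the decoupling parameter $s:=(r+5)H/5$ for every $r\geq 1$, so $\mathscr{D}(\uc{c:decouple},\alpha)$ applies, and translation invariance (Assumption~\ref{hyp:invariant}) identifies the two decoupled factors with $q(r)$.

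Iterating the recursion from the base value $q(1)\leq 1-\uc{c:some_trapped}/2<1$, the squaring term produces super-exponential decay of $q(5^n)$ until it falls below the polynomial residual $c\,\uc{c:H_lower}^{-\alpha}(5^n)^{-\alpha}$; from that point on, a straightforward induction with $C:=c\,\uc{c:H_lower}^{-\alpha}\cdot 5^\alpha+1$ shows $q(5^n)\leq C(5^n)^{-\alpha}$ for every $n\geq n_0$, with $n_0$ depending only on $\delta$, $c$ and $\alpha$ but not on $H$. Monotonicity of $q$ and comparison of an arbitrary $r\geq 1$ with the nearest power of $5$ then give $q(r)\leq 5^\alpha Cr^{-\alpha}$ uniformly in $H\geq\uc{c:H_lower}$, which is the statement of the lemma. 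The main technical delicacy is the localization step: because $\bar{A}_j$ is not a priori a local event, one has to verify that the enlargement to $A_j^*$ simultaneously preserves a uniform gap from $1$ (needed for the base case to initiate decay) and $B_j$-measurability (needed to apply $\mathscr{D}(\uc{c:decouple},\alpha)$ to the two separated blocks of events).
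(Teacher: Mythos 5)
Your proposal follows essentially the same route as the paper's proof: write non-threatenedness as the intersection of the $r$ events that the points $jH(v_+,1)$ are not $H$-trapped, decouple the first and last blocks of the segment (the paper uses thirds and base $3$, you use fifths and base $5$), and bootstrap the single-point bound of Lemma~\ref{l:some_trapped} into decay of order $r^{-\alpha}$ via a two-phase induction followed by interpolation to general $r$. Your explicit localization through the events $F_j$ and the enlarged events $A_j^*$ is in fact somewhat more careful than the paper, which simply asserts that the events in \eqref{e:small_stick} are measurable with respect to suitable boxes.

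One step needs shoring up: the claim that ``the squaring term produces super-exponential decay of $q(5^n)$ until it falls below the polynomial residual.'' The recursion $q(5^{n+1})\leq q(5^n)^2+c(5^nH)^{-\alpha}$ does not by itself force decay from the base value $1-\uc{c:some_trapped}/2$: if $cH^{-\alpha}$ is not small compared with $\uc{c:some_trapped}$ (and there is no reason it should be, since $H$ may equal $\uc{c:H_lower}$ and $\uc{c:some_trapped}$ may be tiny), the additive term can swamp the gain from squaring in the first steps, and the recursion never ``gets started.'' The paper handles exactly this point in Lemma~\ref{l:little_threatened} by shifting the index by a large constant $\uc{c:little_threatened}$ chosen via \eqref{e:choose_little_threatened}, so that the additive term $\uc{c:decouple}3^{-\alpha(\uc{c:little_threatened}+k)}$ stays dominated throughout a preliminary slow-decay phase; only afterwards is the exponent bootstrapped to $\alpha$. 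Your version is repairable with an ingredient you already recorded, namely the monotonicity of $q$: since $q(5^n)\leq q(1)\leq 1-\uc{c:some_trapped}/2$ for all $n$, the recursion gives $q(5^{n+1})\leq\big(1-\uc{c:some_trapped}/2\big)q(5^n)+c5^{-n\alpha}$, hence geometric decay of $q(5^n)$, after which the squaring takes over and your induction with the constant $C$ goes through. Separately, note that enlarging $\uc{c:H_lower}$ to absorb $\mathbb{P}(F_j^{\mathsf{c}})$ into the base case changes the threshold fixed in Lemma~\ref{l:some_trapped}; this is harmless for the rest of the paper but should be stated, since the lemma is later invoked uniformly over $H\geq\uc{c:H_lower}$.
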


The conclusion of this lemma is useful because of the following:
despite the fact that it is conceivable that the trajectory of the random walker could avoid trapped points, the probability that a point is threatened can be made so high (by taking $r$ large) that with very high probability the random walker (and actually any path) cannot avoid spending a significant proportion of its time close to threatened points (as we will prove in the next section).
By Lemma \ref{l:delays}, if the random walker stands  close to a threatened point, then it has to run faster then $v_+$ for a certain interval of time in order to avoid being delayed.
However, by Lemma \ref{l:deviation_interp}, it is very unlike that it will be able to do it.
Thus with high probability, a delay with respect to $v_+$ will occur.
As we show in Section \ref{s:v_+=v_-}, the occurrence of such a  delay  contradicts the definition of $v_+$.

The proof of the above lemma is based once again on a renormalization scheme.
However, this time, we use a much simpler scale progression than the one given by \eqref{e:L_k}.
Fixed $H \geq 1$, we define
\begin{equation}
 \label{e:q_k}
  q_k = q_k^{(H)} := \sup_{w \in [0, 1) \times \{0\}} \mathbb{P}
  [w \text{ is not $(H,3^k)$-threatened}
  ].
\end{equation}

The proof of Lemma~\ref{l:threatened} will follow once we establish a fast decay rate for the sequence $q_k$ as we increase $k$ (more precisely, we will show that $q_k \leq (1/2) 3^{-\alpha k}$).
However, we first need to prove that it decays at a certain uniform rate and only then we will be able to bootstrap this to a fast decay rate resulting in Lemma \ref{l:threatened}.

\nc{c:little_threatened}
\begin{lemma}
  \label{l:little_threatened}
 Suppose Assumptions~\ref{hyp:invariant}, \ref{hyp:v+v-} and \ref{hyp:FT} are satisfied, as well as the decoupling property $\mathscr{D}(\uc{c:decouple}, \alpha)$ for some $\alpha \geq 1$  and let $\delta$ be given as in \eqref{e:delta}.
There exists an integer constant $\uc{c:little_threatened} = \uc{c:little_threatened}(\delta)$ such that for every $H \geq \uc{c:H_lower}$ if  we denote $q_{k}^{(H)} = q_k$ as in \eqref{e:q_k}, then
  \begin{equation}
    \label{e:little_threatened}
    q_{\uc{c:little_threatened} + k} \leq \big( (1 - \uc{c:some_trapped})^{1/2} \vee (1/3) \big)^k,
  \end{equation}
  for any $k \geq 2$.
\end{lemma}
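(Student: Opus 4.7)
The plan is a renormalization scheme at scales $3^k$: I will establish a recursive inequality for $q_k^{(H)}$ and then iterate it, using Lemma~\ref{l:some_trapped} as the starting point. Fix $w \in [0,1)\times\{0\}$ and partition $\{0, \ldots, 3^{k+1}-1\}$ into three consecutive sub-blocks $J_1, J_2, J_3$ of size $3^k$. Setting $A_i^w := \bigcap_{j \in J_i} \{w + jH(v_+,1) \text{ is not } H\text{-trapped}\}$, the event that $w$ is not $(H, 3^{k+1})$-threatened is contained in $A_1^w \cap A_3^w$, and by translation invariance (Assumption~\ref{hyp:invariant}) one has $\mathbb{P}(A_i^w) \leq q_k^{(H)}$ for $i \in \{1,3\}$. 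Since each trap test only involves walker trajectories of duration $H$, after intersecting with an event that confines all the relevant walkers to $O(H)$-boxes (whose complement is controlled by Assumption~\ref{hyp:FT}), each $A_i^w$ becomes measurable in a space-time box of side lengths of order $3^k H$; the two supporting boxes are time-separated by at least $3^k H$, the duration of $J_2$. Applying $\mathscr{D}(\uc{c:decouple}, \alpha)$ with $r = 3^k H$ then yields
\[
q_{k+1}^{(H)} \leq \bigl(q_k^{(H)}\bigr)^2 + \uc{c:decouple}\,(3^k H)^{-\alpha} + (\text{truncation error}).
\]

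For the induction, note that $k \mapsto q_k^{(H)}$ is non-increasing (the family of candidate trap points only grows with $k$), so Lemma~\ref{l:some_trapped} gives $q_k^{(H)} \leq 1-\uc{c:some_trapped}$ for all $k$. Setting $\rho := \max\{(1-\uc{c:some_trapped})^{1/2}, 1/3\}$, one has $\rho^2 \geq 1-\uc{c:some_trapped}$, which gives the base case $q_{\uc{c:little_threatened}+2}^{(H)} \leq \rho^2$ for any $\uc{c:little_threatened}$. For the inductive step, assuming $q_{\uc{c:little_threatened}+k}^{(H)} \leq \rho^k$ with $k \geq 2$, the recursion gives $q_{\uc{c:little_threatened}+k+1}^{(H)} \leq \rho^{2k} + (\text{error})$; the slack $\rho^{k+1} - \rho^{2k} \geq \rho^{k+1}(1-\rho) \geq (1-\rho)\,3^{-(k+1)}$ (using $\rho \geq 1/3$) dominates the polynomial decoupling contribution $\uc{c:decouple}\,(3^{\uc{c:little_threatened}+k}\uc{c:H_lower})^{-\alpha}$ as soon as $\uc{c:little_threatened}$ is chosen large enough in $\delta$: since $\alpha \geq 1$, this contribution is at most $c\,3^{-\alpha\uc{c:little_threatened}}\cdot 3^{-\alpha k}$ and $3^{-\alpha k}/\rho^{k+1}$ is bounded uniformly in $k$, so $\uc{c:little_threatened}$ can be fixed as a function of $\delta$ (through $\uc{c:decouple}, \uc{c:H_lower}, \uc{c:some_trapped}$).

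The main technical obstacle is the truncation. A single trap test is a supremum over $O(\delta H)$ walker starting points, and a block contains $3^k$ such tests, so a naive union bound via Assumption~\ref{hyp:FT} produces a truncation error of order $3^k H\, e^{-cH}$ which, for fixed $H$, grows with $k$. The inclusion of the factor $1/3$ in the definition of $\rho$ is precisely what provides the combinatorial slack needed to absorb this blow-up: with $\rho \geq 1/3$, the target $\rho^{k+1}$ is at least $3^{-(k+1)}$, which must beat the truncation term. Making this work uniformly in $H \geq \uc{c:H_lower}$ requires either exploiting the monotonicity \eqref{e:monotone} so that only $O(\delta H)$ essentially distinct walker trajectories per block need to be controlled, or enlarging the truncation box so that Assumption~\ref{hyp:FT} yields exponential decay on a scale commensurate with $3^k H$ rather than $H$; either route folds the truncation into the polynomial decoupling error and closes the recursion.
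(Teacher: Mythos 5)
Your argument is the paper's own, essentially line for line: the same three-block decomposition of $\{0,\dots,3^{k+1}-1\}$ (the paper's \eqref{e:small_stick}), the same decoupling recursion $q_{n+1}\le q_n^2+\uc{c:decouple}\,(3^nH)^{-\alpha}$, the same base case $q_{\uc{c:little_threatened}+2}\le 1-\uc{c:some_trapped}$ drawn from Lemma~\ref{l:some_trapped}, and the same induction with the constant $(1-\uc{c:some_trapped})^{1/2}\vee(1/3)$; the arithmetic closing the induction is correct.

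The one place you depart from the paper is the truncation discussion, and it deserves a comment. The paper performs no truncation here: it simply asserts that the two events in \eqref{e:small_stick} are measurable with respect to boxes of side length $5\cdot 3^nH$, which is not literally true, since a duration-$H$ walker can in principle exit any fixed box. You are right to flag this, but neither of your two repairs works as stated: monotonicity collapses the starting points of a single trap test to two, not the $3^k$ distinct time levels in a block, and enlarging the confinement box cannot improve the $e^{-cH}$ rate of Assumption~\ref{hyp:FT} for a path of fixed duration $H$, so the additive error of order $3^k e^{-cH}$ still overwhelms the target $3^{-k}$ for fixed $H$ and large $k$. The standard repair is not additive at each level: one runs the recursion on the box-measurable majorants $\bigcap_j\big(([\text{not trapped at level }j]\cap F_j)\cup F_j^{\mathsf{c}}\big)$, which contain the events of interest, satisfy the clean recursion via $\mathscr{D}(\uc{c:decouple},\alpha)$, and incur the exponential confinement cost only once, at the base case, where it is absorbed by a harmless adjustment of the constants $\uc{c:some_trapped}$ and $\uc{c:H_lower}$. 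Since the paper itself elides this point entirely, your proof is no less complete than the published one.
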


In the above lemma, the rate of decay is not important, because it will be boosted soon in the proof of Lemma~\ref{l:threatened}.
Note, however, that the constant $\uc{c:little_threatened}$ only depends on the parameters of the model (including $\delta$) thus the above bound is uniform on $H \geq \uc{c:H_lower}$.
This uniformity will be useful;  if it was not needed, we could have simply used the ergodic theorem to obtain that $q_k$ vanishes with $k$.

\begin{proof}
  To simplify notations, let $1-\tilde{\uc{c:some_trapped}} := (1 - \uc{c:some_trapped})^{1/2} \vee (1/3) $.
  Thanks to Lemma~\ref{l:some_trapped}, we have $\tilde{\uc{c:some_trapped}}>0$.
  Thus we can choose an integer $\uc{c:little_threatened} > 0$  (which does not depend on $H$) for which
  \begin{equation}
    \label{e:choose_little_threatened}
    \uc{c:decouple} (1-\tilde{\uc{c:some_trapped}})^{\uc{c:little_threatened} - 1} \leq \tilde{\uc{c:some_trapped}}.
  \end{equation}

As a simple consequence of \eqref{e:some_trapped} we have
  \begin{equation}
    q_{\uc{c:little_threatened} + 2} \leq (1 - \uc{c:some_trapped}) \leq \big( (1 - \uc{c:some_trapped})^{1/2} \vee (1/3) \big)^2= (1-\tilde{\uc{c:some_trapped}})^2,
  \end{equation}
  proving \eqref{e:little_threatened} for the case $k = 2$.

  Suppose now that we have established \eqref{e:little_threatened} for some value of $k \geq 2$ and let us show that it also holds for $k + 1$.

  Observe first that, if for some $w \in [0, 1) \times \{0\}$, the event
  \begin{equation}
    \bigcap_{j=0}^{3^{\uc{c:little_threatened}+k + 1} - 1}[\text{$w + j H (v_+, 1)$ is not $H$-trapped}]
  \end{equation}
  occurs, then both
  \begin{equation}
  \label{e:small_stick}
    \bigcap_{j=0}^{3^{\uc{c:little_threatened}+k} - 1}
    \Big[
    \begin{array}{c}
      \text{$w + j H (v_+, 1)$ is not}\\
      \text{ $H$-trapped}
    \end{array}
    \Big] \,\,  \text{ and } \,\,
    \bigcap_{j=2 \cdot 3^{\uc{c:little_threatened}+k}}^{3^{\uc{c:little_threatened}+k+1} - 1}
    \Big[
    \begin{array}{c}
      \text{$w + j H (v_+, 1)$ is not}\\
      \text{$H$-trapped}
    \end{array}
    \Big]
  \end{equation}
  occur.
  Notice that the two events in \eqref{e:small_stick} are measurable with respect to the environment together with the arrival times $(T_i^x)$ and $(U_i^x)$ corresponding to space-time points contained in suitable boxes of side-length at most $5\cdot 3^{\uc{c:little_threatened}+k}H$ separated vertically by a distance of $3^{\uc{c:little_threatened}+k}H$ (for instance one can take the boxes $B^1_{3^{\uc{c:little_threatened}+k}H}(w)$ and $2\cdot 3^{\uc{c:little_threatened}+k}H (v_+,1)+B^1_{3^{\uc{c:little_threatened}+k}H}(w)$).

Thus we can use $\mathscr{D}(\uc{c:decouple}, \alpha)$ to deduce that
  \begin{equation}
    \label{e:q_induction_inequality}
    q_{\uc{c:little_threatened}+k + 1} \leq q_{\uc{c:little_threatened}+k}^2 + \uc{c:decouple} \big( 3^{\uc{c:little_threatened}+k} H \big)^{-\alpha} \leq q_{\uc{c:little_threatened}+k}^2 + \uc{c:decouple} 3^{-\alpha(\uc{c:little_threatened}+ k)}.
  \end{equation}
  Joining this with the fact that we know the validity of \eqref{e:little_threatened} for $k$, we get
  \begin{equation}
    \begin{split}
      \frac{q_{\uc{c:little_threatened} + k + 1}}{ (1-\tilde{\uc{c:some_trapped}})^{k + 1}}
      & \overset{(\alpha \geq 1)}\leq (1-\tilde{\uc{c:some_trapped}})^{-k - 1}
      \big( q_{\uc{c:little_threatened} + k}^2
      + \uc{c:decouple} 3^{-(\uc{c:little_threatened} + k)} \big)\\
      & \leq(1-\tilde{\uc{c:some_trapped}})^{k - 1} + \uc{c:decouple}  (1-\tilde{\uc{c:some_trapped}})^{\uc{c:little_threatened} - 1} \overset{k \geq 2, \eqref{e:choose_little_threatened}}\leq 1.
    \end{split}
  \end{equation}
  This finishes the proof of the lemma by induction.
\end{proof}

We can now prove Lemma~\ref{l:threatened}.

\begin{proof}[Proof of Lemma~\ref{l:threatened}]
  We first choose an integer $\dot{c} \geq 1$ such that
  \begin{equation}
    \label{e:choose_threatened}
    2 \uc{c:decouple} 3^{-\alpha(\dot{c} - 1)} \leq \frac{1}{2}.
  \end{equation}
  Observe also that from Lemma~\ref{l:little_threatened} there exists an integer $c'(\delta) \geq \uc{c:little_threatened} (\delta)\vee \dot{c}$ such that
  \begin{equation}
    q_{c' + 1} = q^{(H)}_{c' + 1} \leq \frac{1}{2} 3^{-\alpha}, \text{ uniformly on $H \geq \uc{c:H_lower}$}.
  \end{equation}

  Our aim is to show by induction that
  \begin{equation}
    \label{e:induction_threatened}
    q_{c' + k} \leq \frac{1}{2} 3^{-\alpha k}, \text{ for every $k \geq 1$},
  \end{equation}
  again, uniformly on $H \geq \uc{c:H_lower}$, which has already been established for $k = 1$.

  Suppose that \eqref{e:induction_threatened} has already been established for some $k \geq 1$.
  Then, using an argument similar to that leading to \eqref{e:q_induction_inequality}, we obtain that
  \begin{equation}
    \begin{split}
      \frac{q_{c' + k + 1}}{\tfrac{1}{2} 3^{-\alpha (k + 1)}}
      & \leq 2 \cdot 3^{\alpha (k + 1)} \Big( \frac{1}{4} 3^{-2 \alpha k}
      + \uc{c:decouple} 3^{-\alpha (c' + k)} \Big)\\
      & \leq \underbrace{\frac{1}{2} 3^{-\alpha(k - 1)}}_{\leq 1/2}
      + \underbrace{2 \uc{c:decouple} 3^{-\alpha (c' - 1)}}_{\leq 1/2 \text{ by \eqref{e:choose_threatened}}} \leq 1.
    \end{split}
  \end{equation}
  This proves \eqref{e:induction_threatened} by induction.

  Now let $r > 3^{c'}$ and fix $k \geq 0$ such that $3^{c'+k} \leq r < 3^{c'+k+1}$.
  Thus,
  \begin{equation}
  \begin{split}
    \mathbb{P} &[\text{$0$ is not $(H, r)$-threatened}] \\
     & \leq \sup_{w \in [0, 1) \times \{0\}} \mathbb{P}
  [
 w\text{ is not $(H,3^{c'+k})$-threatened}
  ] \\
  & \leq \frac{1}{2}3^{-\alpha k} \leq \frac{3^{\alpha(c'+1)}}{2} r^{-\alpha}.
  \end{split}
  \end{equation}
  By properly choosing the constant $\uc{c:threatened}(\delta)$ in order to accommodate small values of $r$, the proof is finished.
\end{proof}

\subsection{Threatened paths}
\label{s:threat_paths}

We already know that a threatened point will most likely cause a delay to the random walker and that the probability that a point is $(H,r)$-threatened can be made arbitrarily high by increasing $r$, uniformly in $H$.
This section is dedicated to the task of showing that the trajectory of the random walker cannot avoid threatened points.
Since we still know very little about the actual behavior of the random walker trajectory, we instead show that, with high probability, every allowed path spends a significant  proportion of its time on threatened points.

Recall the Definition~\ref{d:allowed} of allowed paths and the scale sequence $L_k$ in \eqref{e:L_k}.
From now on we are always going to consider $(H,r)$-threatened points with pairs $(H,r)$ chosen so that  $H = h L_k$ and $r = l_k$ for some positive integer $k$.

We start by proving that, for large enough $k$, with high probability, every point in $I^h_{L_k}$ lies close to a $(hL_k,l_k)$-threatened point.
More precisely,

\nc{c:L_k2-threat}
\begin{lemma}
  \label{l:trigger_s}
  Recall the definition of $\uc{c:F_m}$ from \eqref{e:F_m_decay}.
  If $\alpha\geq 8$, there exists an integer $\uck{k:threat_path} = \uck{k:threat_path} (\delta)$ and a constant $\uc{c:L_k2-threat}=\uc{c:L_k2-threat}(\delta)>0$ such that
  \begin{equation}
   \label{e:k_big_enough}
  L_{\uck{k:threat_path}} > \uc{c:H_lower},
  \end{equation}
    \begin{equation}
    \label{e:trigger_s}
    \mathbb{P} \Big[
    \begin{array}{c}
      \text{there exists some $y \in I^h_{L_{\uck{k:threat_path} + 1}}(w)$ such that}\\
      \text{$\lfloor y \rfloor_{h L_{\uck{k:threat_path}}}$ is not $(h L_{\uck{k:threat_path}}, l_{\uck{k:threat_path}})$-threatened}
    \end{array}
    \Big] \leq \uc{c:L_k2-threat} L_{\uck{k:threat_path} + 1}^{-(\alpha-1)/5}
  \end{equation}
  uniformly over $h \geq 1$ and $w \in \mathbb{L}$ and
  \begin{equation}
    \label{e:k_trigger_large}
    25 (\uc{c:L_k2-threat}^2+\uc{c:decouple}) L_{k}^{(23-3\alpha)/20} + 25\, \uc{c:F_m}^{-1} L_{k}^{(\alpha+3)/4} e^{- \uc{c:F_m} L_{k}} \leq 1 \text{ for every $k \geq \uck{k:threat_path}$}.
  \end{equation}
  \end{lemma}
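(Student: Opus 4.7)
The plan is to derive \eqref{e:trigger_s} from Lemma~\ref{l:threatened} by a union bound over the finitely many distinct values of $\lfloor y\rfloor_{hL_k}$ attained as $y$ ranges over $I^h_{L_{k+1}}(w)$, and then to take the threshold index in the statement sufficiently large so that \eqref{e:k_big_enough} and \eqref{e:k_trigger_large} also hold.

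First I count the distinct rounded points. Since $\lfloor (x,t)\rfloor_{hL_k}=(\lfloor x/\widetilde H\rfloor\widetilde H,\, t)$ with $\widetilde H=\lfloor\delta hL_k/4\rfloor$, as $y$ varies over the horizontal segment $I^h_{L_{k+1}}(w)$ of length $hL_{k+1}$, the point $\lfloor y\rfloor_{hL_k}$ takes at most $\lceil hL_{k+1}/\widetilde H\rceil+1$ distinct values. For $L_k$ large enough depending on $\delta$, we have $\widetilde H\geq \delta hL_k/8$, so this count is bounded by $c(\delta)\,l_k$ uniformly in $h\geq 1$. This is precisely the reason the rounding was introduced: it replaces the continuum $I^h_{L_{k+1}}(w)$ by a discrete set of reference points whose size is comparable to $l_k$, independent of $h$.

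Next I apply Lemma~\ref{l:threatened} at scale $H=hL_k$ (valid since $hL_k\geq L_k\geq \uc{c:H_lower}$ once $L_k$ is large enough) and $r=l_k$. Combined with the translation invariance of Assumption~\ref{hyp:invariant}, it yields, for each fixed reference point, $\mathbb{P}[\lfloor y\rfloor_{hL_k}\text{ is not }(hL_k,l_k)\text{-threatened}]\leq \uc{c:threatened}\, l_k^{-\alpha}$. A union bound then gives
\[
\mathbb{P}\bigl[\exists\, y\in I^h_{L_{k+1}}(w):\;\lfloor y\rfloor_{hL_k}\text{ is not }(hL_k,l_k)\text{-threatened}\bigr]\;\leq\; c(\delta)\,l_k^{\,1-\alpha}.
\]
Using $l_k\leq L_k^{1/4}$ and $L_{k+1}\leq L_k^{5/4}$ from \eqref{e:scale_round}, the right-hand side is at most $c(\delta)\,L_k^{-(\alpha-1)/4}\leq c(\delta)\,L_{k+1}^{-(\alpha-1)/5}$, which is \eqref{e:trigger_s} with a suitable $\uc{c:L_k2-threat}(\delta)$.

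Finally I choose the threshold index large enough that \eqref{e:k_big_enough}, \eqref{e:trigger_s} and \eqref{e:k_trigger_large} hold simultaneously. Condition \eqref{e:k_big_enough} is just $L_k>\uc{c:H_lower}$. For \eqref{e:k_trigger_large}, the hypothesis $\alpha\geq 8$ makes the exponent $(23-3\alpha)/20$ strictly negative, so the polynomial term decays, while the exponential $e^{-\uc{c:F_m}L_k}$ dominates the polynomial prefactor in the second term; both quantities thus tend to $0$ as $k\to\infty$. There is no substantive obstacle: the hard work is packaged in Lemma~\ref{l:threatened}, and the only point requiring care is the bookkeeping that converts the $l_k$-loss from the union bound at scale $L_k$ into the target exponent $-(\alpha-1)/5$ at scale $L_{k+1}$, which is possible precisely because $L_{k+1}/L_k\asymp l_k\asymp L_k^{1/4}$.
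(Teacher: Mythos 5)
Your proposal is correct and follows essentially the same route as the paper: a union bound over the $O(l_k)$ distinct rounded points in $I^h_{L_{k+1}}(w)$ (this uniformity in $h$ being exactly what the rounding buys), the per-point bound from Lemma~\ref{l:threatened} via translation invariance, the exponent conversion through \eqref{e:scale_round}, and finally taking $\uck{k:threat_path}$ large enough so that \eqref{e:k_big_enough} and \eqref{e:k_trigger_large} hold, the latter because $(23-3\alpha)/20<0$ when $\alpha\geq 8$.
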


\begin{proof}
It is obvious that \eqref{e:k_big_enough} holds for $\uck{k:threat_path}$ large enough.
  A direct application of Lemma~\ref{l:threatened} yields
  \begin{equation}
   \label{e:hlklk_threatened}
    \mathbb{P} \big[ 0 \text{ is not $(hL_k, l_k)$-threatened} \big] \leq \uc{c:threatened} l_k^{-\alpha},
  \end{equation}
  for every $k$ such that $L_k \geq \uc{c:H_lower}>4/\delta$,  uniformly over $h \geq 1$ (recall that $\uc{c:H_lower}$ only depends on $\delta$ and on the environment).

Let $w \in \mathbb{L}$.
Knowing that $\pi_1(\lfloor y \rfloor_{h L_k})$ is an integer for each $y \in I^h_{L_{k+1}}(w)$ and using \eqref{e:hlklk_threatened} together with translation invariance we get that, for some suitable constant $\uc{c:L_k2-threat}>0$ depending only on $\delta$ (and in the environment),
  \begin{equation}
    \label{e:all_are_threatened}
    \begin{split}
    \mathbb{P} & \Big[
    \begin{array}{c}
      \text{there exists some $y \in I^h_{L_{k + 1}}(w)$ such that}\\
      \text{$\lfloor y \rfloor_{h L_k}$ is not $(h L_{k}, l_{k})$-threatened}
    \end{array}
    \Big]\\
      & \leq\left\lceil \frac{h L_{k + 1}}{\lfloor(\delta/4)h L_k\rfloor}\right\rceil \; (\uc{c:threatened} l_{k}^{-\alpha}) \leq c(\delta)\, l_k^{-\alpha + 1} \leq \uc{c:L_k2-threat}\, L_{k+1}^{-(\alpha-1)/5}
    \end{split}
  \end{equation}
for every $k$ such that $L_k \geq \uc{c:H_lower}>4/\delta$,  uniformly over $h \geq 1$.

Now that $\uc{c:L_k2-threat}$ is fixed, let us consider \eqref{e:k_trigger_large}.
Since we are assuming $\alpha \geq 8$, the exponent $(23-3\alpha)/20$ appearing in the left-hand side is negative.
Therefore, \eqref{e:k_trigger_large} holds as soon as $k$ is sufficiently large.
This concludes the proof.
\end{proof}

\begin{remark}
  \label{r:rounding}
  Notice that we have only considered rounded points $\lfloor y \rfloor_{h L_k}$.
  This was crucial for the conclusion of Lemma~\ref{l:trigger_s} to hold uniformly over all integers $h$.
  Indeed, if we had considered every integer $y\in I^h_{L_{k+1}}(w)$, the factor $h$ would not have cancelled out in \eqref{e:all_are_threatened}.
  This shows that the reason for introducing the rounding procedure in equation \eqref{e:round_H} is to lower the entropy when looking for non-threatened points inside boxes.
\end{remark}

From now on we will keep $\uck{k:threat_path}$ fixed as in Lemma~\ref{l:trigger_s} and we will check whether certain points are $(hL_{\uck{k:threat_path}}, l_{\uck{k:threat_path}})$-threatened.
We cannot hope that the random walk will always be close to a threatened point.
We instead look at the density of time that the random walk spends around threatened points as made precise in the following definition.

\begin{definition}
  Fix $h \geq 1$ and let $\uck{k:threat_path}$ be as in Lemma~\ref{l:trigger_s}.
Given some $k \geq \uck{k:threat_path} + 1$ and an allowed path $Y=(Y_t)_{t \in [0, h L_k]}$, we define its threatened density as
  \begin{equation*}
    D^h(Y) := \frac{1}{L_k / L_{\uck{k:threat_path} + 1}} \# \Big\{ 0 \leq j < \tfrac{L_{k}}{L_{\uck{k:threat_path} + 1}} \colon \lfloor Y_{j h L_{\uck{k:threat_path} + 1}} \rfloor_{h L_{\uck{k:threat_path}}} \text{ is $(h L_{\uck{k:threat_path}}, l_{\uck{k:threat_path}})$-threatened} \Big\}.
  \end{equation*}
\end{definition}

Note that on the complementary of the event appearing in the Eq.\ \eqref{e:trigger_s} in Lemma~\ref{l:trigger_s}, every allowed path $Y=(Y_t)_{t \in [0, h L_{k_2+1}]}$ starting at $I^h_{L_{\uck{k:threat_path}+1}}(w)\cap \mathbb{L}$ has $D^h(Y)$ equal to one.
More precisely,
\begin{equation}
  \begin{split}
    & \Big[
    \begin{array}{c}
      \text{there exists some $y \in I^h_{L_{\uck{k:threat_path} + 1}}(w)$ such that}\\
      \text{$\lfloor y \rfloor_{h L_{\uck{k:threat_path}}}$ is not $(h L_{\uck{k:threat_path}}, l_{\uck{k:threat_path}})$-threatened}
    \end{array}
    \Big]^{\mathsf{c}}\\
    & \qquad \subseteq \Big[
    \begin{array}{c}
      \text{every allowed path $(Y_t)_{t \in [0, h L_{\uck{k:threat_path} + 1}]}$ starting at}\\
      \text{$I^h_{L_{\uck{k:threat_path} + 1}}(w) \cap \mathbb{L}$ satisfies $D^h(Y) = 1$}
    \end{array}
    \Big].
  \end{split}
\end{equation}
Indeed, according to the definition of $D^h(Y)$, for $k = \uck{k:threat_path} + 1$ one only needs to check whether the starting point of $Y$ is $(h L_{\uck{k:threat_path}}, l_{\uck{k:threat_path}})$-threatened.

Considering the above remark, Lemma~\ref{l:trigger_s} establishes that at scale $\uck{k:threat_path} + 1$, with high probability all the paths starting at $I^h_{L_{\uck{k:threat_path} + 1}} \cap \mathbb{L}$  have $D^h(Y) = 1$.
But, as we have already observed, the random walker path will eventually pass through regions composed of non-threatened points which could cause the threatened density to drop under one.

The next lemma, which is the main result of this section, shows that, with high probability, every allowed path spends a positive proportion of its time next to threatened points.

\nck{k:threat_path}
\begin{lemma}[Threatened paths]
  \label{l:threatened_paths}
  Assume $\alpha \geq 8$.
  Then, for any integer $k \geq \uck{k:threat_path} + 1$, we have
  \begin{equation}
  \label{e:threat_path_bound}
    \mathbb{P} \Big[
    \begin{array}{c}
      \text{there exists an allowed path $Y = (Y_t)_{t \in [0, h L_k]}$}\\
      \text{starting at $I^h_{L_k}(w) \cap \mathbb{L}$ and having $D^h(Y) < 1/2$}
    \end{array}
    \Big] \leq \uc{c:L_k2-threat} L_k^{-(\alpha-1)/5}.
  \end{equation}
  uniformly in $h \geq 1$ and $w \in \mathbb{R}^2$.
\end{lemma}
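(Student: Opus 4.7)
The plan is to prove the lemma by induction on $k \geq \uck{k:threat_path}+1$, the base case being handed to us by Lemma~\ref{l:trigger_s}: at scale $k = \uck{k:threat_path}+1$, the density $D^h(Y)$ of any allowed path starting in $I^h_{L_{\uck{k:threat_path}+1}}(w)\cap\mathbb{L}$ involves only one checkpoint, namely its starting point, so the complement of the event in \eqref{e:trigger_s} forces $D^h(Y)=1$. To make the cascade work, I would prove the slightly stronger statement $p_k := \sup_w \mathbb{P}[\exists Y\text{ on }I^h_{L_k}(w)\cap\mathbb{L}\text{ with }D^h(Y)<d_k] \leq \uc{c:L_k2-threat} L_k^{-(\alpha-1)/5}$ for a sequence of thresholds $d_{\uck{k:threat_path}+1}:=1$ and $d_{k+1}:=d_k - 3/l_k$. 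Since $l_k\geq L_k^{1/4}/2$ and $L_k$ grows super-exponentially, the tail $\sum 3/l_k$ is tiny; enlarging $\uck{k:threat_path}$ a priori (which is harmless in Lemma~\ref{l:trigger_s}) we have $d_k>1/2$ for every $k$, so the desired bound for the event $\{D^h(Y)<1/2\}$ follows.

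For the inductive step, fix $m=(h,k+1,w)$ and suppose $Y$ is an allowed path of length $hL_{k+1}$ starting in $I^h_{L_{k+1}}(w)\cap\mathbb{L}$ with $D^h(Y)<d_{k+1}$. Split $[0,hL_{k+1}]$ into $l_k$ sub-intervals of length $hL_k$ and write $D^h(Y)=l_k^{-1}\sum_{j=0}^{l_k-1}D^h(Y_j)$ with $Y_j$ the $j$-th piece. If at most two sub-paths had $D^h(Y_j)<d_k$, the remaining $l_k-2$ terms would give $D^h(Y)\geq d_k(1-2/l_k)\geq d_k-2/l_k>d_{k+1}$, a contradiction. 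Hence at least \emph{three} sub-paths fail at scale $k$. On $\bigcap_{m'\in C_m}F_{m'}$ (with $C_m$ the cover from Section~\ref{s:scales}) each $Y_j$ starts at some $I_{m'}$ with $m'\in C_m$; picking the smallest and largest failing time indices produces two boxes $m_1,m_3\in C_m$ whose time-distance is at least $hL_k$, on which the relevant events "$D^h(Y_j)<d_k$" are localized (their tiny temporal lookahead $hL_{\uck{k:threat_path}+1}$ fits inside the $5hL_k$ footprint required by $\mathscr{D}(\uc{c:decouple},\alpha)$ once $k\geq \uck{k:threat_path}+1$).

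The rest is a verbatim analogue of Lemma~\ref{l:inductive}. Summing over the $|C_m|^2 \leq 25 l_k^4$ pairs and applying $\mathscr{D}(\uc{c:decouple},\alpha)$ to decouple the two well-separated events, together with \eqref{e:F_m_decay} for the $F_{m'}$ complement, yields
\begin{equation*}
  p_{k+1} \leq 25\, l_k^4 \bigl(p_k^2 + \uc{c:decouple}(hL_k)^{-\alpha}\bigr) + 5\, l_k^2 \uc{c:F_m}^{-1} e^{-\uc{c:F_m} h L_k}.
\end{equation*}
Substituting the inductive estimate $p_k\leq\uc{c:L_k2-threat}L_k^{-(\alpha-1)/5}$ and dividing by $\uc{c:L_k2-threat}L_{k+1}^{-(\alpha-1)/5}$, and using $l_k^4\leq L_k$, $L_{k+1}\leq L_k^{5/4}$, $h\geq 1$, the right-hand side is bounded above by the exact quantity in \eqref{e:k_trigger_large}, which is $\leq 1$ for $k\geq\uck{k:threat_path}$. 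This closes the induction.

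The main obstacle is the combinatorial design of the threshold sequence $d_k$: decoupling forces us to exhibit \emph{two} sub-paths whose boxes are time-separated by at least $hL_k$, and this in turn forces us to extract \emph{three} failing sub-paths instead of just one. The gap $d_k - d_{k+1}=3/l_k$ is chosen just big enough to enforce three failures via the averaging argument, yet small enough that $\sum(d_k - d_{k+1})$ stays well below $1/2$ so that $d_k>1/2$ throughout. Once this combinatorial bookkeeping is in place, everything else is a direct translation of the renormalization template developed earlier in the paper.
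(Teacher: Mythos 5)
Your proposal is correct and follows essentially the same route as the paper: the paper likewise runs an induction on $k$ with a decreasing sequence of density thresholds ($\rho_{k+1}=\rho_k-2/l_k$, staying above $1/2$), extracts at least three failing layers by the same averaging/contradiction argument so as to obtain two sub-events supported in boxes time-separated by $hL_k$, and closes the recursion exactly as in Lemma~\ref{l:inductive} using $\mathscr{D}(\uc{c:decouple},\alpha)$, \eqref{e:F_m_decay} and the numerical condition \eqref{e:k_trigger_large}. The only differences (decrement $3/l_k$ versus $2/l_k$, strict versus non-strict inequality in the density event) are immaterial.
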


\nck{k:tail}
Similarly to what we have done in the definition of the speeds $v_k$ in Section~\ref{s:deviations}, we are going to introduce a sequence of densities that will always remain above $1/2$.
Note that by our choice of scales (see \eqref{e:L_k})
\begin{equation}
  \sum_{k \geq 1} \frac{2}{l_k} \leq \frac{1}{2},
\end{equation}
so that if we define
\begin{equation}
  \rho_{\uck{k:threat_path}} := 1 \quad \text{and} \quad \rho_{k + 1} := \rho_k - \frac{2}{l_k} \text{ for $k \geq \uck{k:threat_path}$},
\end{equation}
we have $\rho_k \geq 1/2$ for every $k \geq \uck{k:threat_path}$.

\begin{proof}[Proof of Lemma~\ref{l:threatened_paths}]
  We use a renormalization scheme based on an induction on $k \geq \uck{k:threat_path} + 1$.
  The case $k = \uck{k:threat_path} + 1$ has already been dealt with in Lemma~\ref{l:trigger_s}.

We introduce hierarchical events, similar to those appearing in \eqref{e:threat_path_bound}.
  For this, given $k \geq \uck{k:threat_path} + 1$ and an index $m \in M^h_k$ (recall the definition in \eqref{e:M_h_k}) we define
  \begin{equation}
    S_m := \Big[
    \begin{array}{c}
      \text{there exists an allowed path $Y = (Y_t)_{t \in [0, h L_k]}$}\\
      \text{starting at $I_m \cap \mathbb{L}$ and satisfying $D^h(Y) \leq \rho_k$}
    \end{array}
    \Big]
  \end{equation}
  and write
  \begin{equation}
    \label{e:s_k}
    s^h_k := \sup_{m \in M^h_k} \mathbb{P} \big[ S_m \big].
  \end{equation}

  Since all the densities $\rho_k$ are at least equal to $1/2$, it is enough to show that
  \begin{equation}
    \label{e:s_k_decay}
    s^h_k \leq \uc{c:L_k2-threat} L_k^{-(\alpha-1)/5}, \text{ for every $k \geq \uck{k:threat_path} + 1$,}
  \end{equation}
 uniformly over $h \geq 1$.

  Observe that by Lemma~\ref{l:trigger_s} we already have
  \begin{equation}
    s^h_{\uck{k:threat_path} + 1} \leq \uc{c:L_k2-threat} L^{-(\alpha-1)/5}_{\uck{k:threat_path} + 1},
  \end{equation}
  uniformly over $h \geq 1$.
  Therefore, from now on we assume that \ref{e:s_k_decay} holds for some $k \geq \uck{k:threat_path} + 1$ and prove that it also holds for $k + 1$.

  Recall the definition of the events $F_m$ in \eqref{e:boundedspeed}.
 Using the exact same argument as in the proof of Lemma~\ref{l:cascade}, we can show that for $m \in M^h_{k + 1}$ with $k \geq \uck{k:threat_path}$
   \begin{display}
    on the event $S_m \cap \big( \cap_{m' \in C_m} F_{m'} \big)$ there exist $m_1, m_2$ in $C_m$ such that $S_{m_1}\cap F_{m_1}$ and $S_{m_2}\cap F_{m_2}$ occur and $d(B_{m_1}, B_{m_2}) \geq  h L_k$.
  \end{display}
In fact, if $m=(h,k+1,w) \in M_{k+1}$, one can split the time interval $\pi_2(w)+[0,hL_{k+1}]$ into $l_k$ layers of length $L_k$.
An allowed path $Y$ starting at $I_m$, crosses each of the $l_k$ layers starting from a point in an interval of the type $I_{m_i}$ with $m_i \in C_m$.
Assume that, for at most two of these layers, the event $S_{m_i}$ occurs at the corresponding index $m_i$.
Then we would have $D^h(Y)\geq \rho_k-2\rho_k/l_k>\rho_{k+1}$ so that $S_m$ could not occur.
Therefore $S_{m_i}$ has to occur for at least three layers which allows us to find the boxes $B_{m_1}$ and $B_{m_2}$ with time separation at least equal to $hL_k$.

  Note also that $S_{m_i} \cap F_{m_i}$ is measurable with respect to the environment inside $B_{m_i}$ together with the arrival times $(T_i^x)$ and the random variables $(U^x_i)$ associated to space-time points inside $B_{m_i}$.

  Therefore using the fact that ${B}_{m_1}$ and ${B}_{m_2}$ are boxes of side lengths at most $5h L_k$ separated by a time-distance at least equal to $hL_k$, that $\mathbb{P}$ is stationary, invariant under shifts by $\mathbb{L}$ and using $\mathscr{D}(\uc{c:decouple}, \alpha)$ we conclude that
  \begin{equation}
    \mathbb{P} \big( (S_{m_1} \cap F_{m_1}) \cap (S_{m_2} \cap F_{m_2}) \big) \leq (s_k^h)^2 + \uc{c:decouple} L_k^{-\alpha}.
  \end{equation}

  With this, we can estimate
  \begin{equation}
    \begin{split}
      \frac{s^h_{k + 1}}{L_{k + 1}^{-(\alpha-1)/5}} &
      \leq  L_k^{(\alpha-1)/4} \big( 25\, l_k^4 \,((s^h_k)^2 + \uc{c:decouple} L_k^{-\alpha}) + 5l_k^2\, \uc{c:F_m}^{-1} e^{- \uc{c:F_m} L_k}   \big)\\
      & \leq  25\, l_k^4  L_k^{(\alpha-1)/4} \big( (s^h_k)^2 + \uc{c:decouple} L_k^{-\alpha} + \uc{c:F_m}^{-1} e^{- \uc{c:F_m} L_k}   \big)\\
      & \leq 25 L_k^{1+(\alpha-1)/4 } \big( \uc{c:L_k2-threat}^2 L_k^{-2 (\alpha-1)/5} + \uc{c:decouple} L_k^{-\alpha} +  \uc{c:F_m}^{-1} e^{-\uc{c:F_m} L_k} \big)\\
      & \leq 25 (\uc{c:L_k2-threat}^2+ \uc{c:decouple}) L_k^{(23-3\alpha)/20} + 25 \uc{c:F_m}^{-1} L_k^{(\alpha+31)/4} e^{- \uc{c:F_m} L_k}\overset{ \eqref{e:k_trigger_large}}\leq 1,
    \end{split}
  \end{equation}
  concluding the proof of Lemma~\ref{l:threatened_paths}.
\end{proof}

\section{Proof of Theorem~\ref{t:main}}\label{s:v_+=v_-}

We are now ready to prove the main result of this article.

\begin{proof}[Proof of Theorem~\ref{t:main}]
  In view of Lemma~\ref{l:deviation_interp}, it is enough to prove that $v_- = v_+$.
  Suppose by contradiction that $v_- < v_+$ and let
  \begin{equation}
    \delta = \frac{v_+ - v_-}{4} \quad \text{and} \quad \eta := \frac{\delta}{4l_{ \uck{k:threat_path}}},
  \end{equation}
 where $\uck{k:threat_path}$ is given as in Lemma~\ref{l:trigger_s}.

  From now on, given an index $k\geq \uck{k:threat_path}+1$ we will choose $h = h_k = L_k$ so that the quantities $h L_k$ and $h L_{\uck{k:threat_path}}$ that appeared often in the previous section will be turned into $L_k^2$ and $L_k L_{\uck{k:threat_path}}$, respectively.
  Our aim is to show that
  \begin{equation}
    \label{e:then_delay}
    \lim_{k \to \infty} p_{L_k^2} (v_+ - \eta/2)= 0,
  \end{equation}
  which contradicts the definition of $v_+$.

Let us start by proving that allowed paths usually do not exceed average speed $v_+ + \eta$.
  More precisely, given some $k \geq \uck{k:threat_path} + 1$ and $w \in \mathbb{R}^2$, we consider the box $B^{L_k}_{L_k}(w) = (w+[-2 L_k^2, 3 L_k^2] \times [0, L_k^2]) $ and slice it along the sequence of time steps
  \begin{equation}
    J_{0,w} = \pi_2(w) + \{0, L_k L_{\uck{k:threat_path}}, 2 L_k L_{\uck{k:threat_path}}, \dots, (L_k/L_{\uck{k:threat_path}}-1)L_k L_{\uck{k:threat_path}}\},
  \end{equation}
  which contains $L_k/L_{\uck{k:threat_path}}$ elements.
  We want a lower bound on the probability of the following event:
  \begin{equation}
    \label{e:event_no_hurry}
    G_1(w) := \Big[
    \begin{array}{c}
      \text{for every $y \in B^{L_k}_{L_k}(w)\cap\mathbb{L}$ with $\pi_2(y) \in J_{0,w}$}\\
      \text{ $X^y_{L_k L_{\uck{k:threat_path}}} - \pi_1(y) \leq \big( v_+ + \eta \big) L_k L_{\uck{k:threat_path}}$}
    \end{array}
    \Big].
  \end{equation}
By paving the box $B^{L_k}_{L_k}(w)$ with boxes of side length $5 L_k L_{\uck{k:threat_path}}$ by $L_k L_{\uck{k:threat_path}}$ and using Lemma~\ref{l:deviation_interp}, we conclude that
  \begin{equation}
    \label{e:no_hurry}
   \sup_{w\in[0,1)\times\{0\}} \mathbb{P} (G_1(w)^{\mathsf{c}}) \leq c \; \Big( \frac{L_k}{L_{\uck{k:threat_path}}} \Big)^2 \big( L_k L_{\uck{k:threat_path}} \big)^{-\alpha/4}
  \end{equation}
  which converges to zero as $k$ goes to infinity, since we are assuming $\alpha > 8$.

Roughly speaking, inequality \eqref{e:no_hurry} shows that that the random walker cannot hurry up too much in any of the time subintervals of length $L_k L_{\uck{k:threat_path}}$.
  Moreover, we know from Lemma~\ref{l:threatened_paths} that it typically spends a large proportion of its time on threatened points.
Indeed, let us denote
\begin{equation}
G_2(w) := \Big[
      \begin{array}{c}
        \text{every allowed path $Y=(Y_t)_{t \in [0, L_k^2]}$ starting at}\\
        \text{$w + [0, L_k^2) \times \{0\} \cap \mathbb{L}$ satisfies $D^{L_k}(Y) \geq 1/2$}
      \end{array}
      \Big]
\end{equation} and use Lemma~\ref{l:threatened_paths} in order to get
  \begin{equation}
   \sup_{w\in [0,1)\times \{0\}}   \mathbb{P} (G_2(w)) \geq 1 - \uc{c:L_k2-threat} L_k^{-(\alpha-1)/5}.
  \end{equation}

  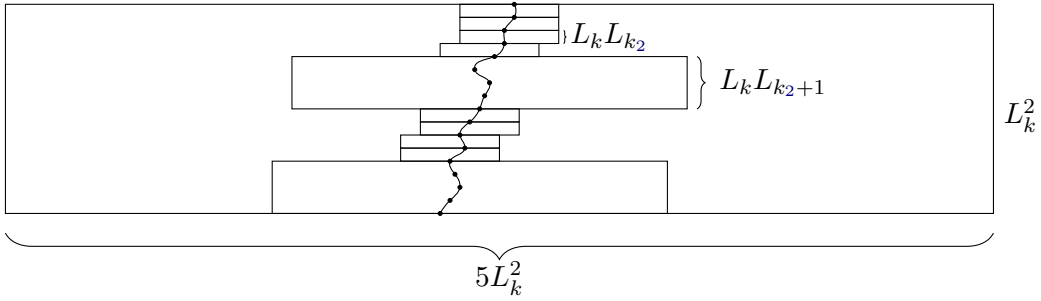
\begin{figure}
    \centering
      \begin{tikzpicture}[use Hobby shortcut,scale=1.3]
        \draw (0, 0) rectangle (10, 2.1333);
        \draw (4.40, 0) .. (4.50, 0.1333) .. (4.60, 0.2666) .. (4.55, 0.4) .. (4.50, 0.5333) ..          (4.65, 0.6666) .. (4.60, 0.8) .. (4.70, 0.9333) .. (4.80, 1.0666) .. (4.85, 1.2) .. (4.90, 1.3333) .. (4.75, 1.4666) .. (4.95, 1.6) .. (5.05, 1.7333) .. (5.05, 1.8666) .. (5.15, 2) .. (5.15, 2.1333);
        \draw[fill] (4.40, 0) circle (0.02);
        \draw[fill] (4.50, 0.1333) circle (0.02);
        \draw[fill] (4.60, 0.2666) circle (0.02);
        \draw[fill] (4.55, 0.4) circle (0.02);
        \draw[fill] (4.50, 0.5333) circle (0.02);
        \draw[fill] (4.65, 0.6666) circle (0.02);
        \draw[fill] (4.60, 0.8) circle (0.02);
        \draw[fill] (4.70, 0.9333) circle (0.02);
        \draw[fill] (4.80, 1.0666) circle (0.02);
        \draw[fill] (4.85, 1.2) circle (0.02);
        \draw[fill] (4.90, 1.3333) circle (0.02);
        \draw[fill] (4.75, 1.4666) circle (0.02);
        \draw[fill] (4.95, 1.6) circle (0.02);
        \draw[fill] (5.05, 1.7333) circle (0.02);
        \draw[fill] (5.05, 1.8666) circle (0.02);
        \draw[fill] (5.15, 2) circle (0.02);
        \draw[fill] (5.15, 2.1333) circle (0.02);
        \draw (2.70, 0) rectangle (6.70, 0.5333); 
        \draw (4.00, 0.5333) rectangle (5.00, 0.6666);
        \draw (4.00, 0.6666) rectangle (5.00, 0.8);
        \draw (4.20, 0.8) rectangle (5.20, 0.9333);
        \draw (4.20, 0.9333) rectangle (5.20, 1.0666);
        \draw (2.90, 1.0666) rectangle (6.90, 1.6); 
        \draw (4.40, 1.6) rectangle (5.40, 1.7333);
        \draw (4.60, 1.7333) rectangle (5.60, 1.8666);
        \draw (4.60, 1.8666) rectangle (5.60, 2);
        \draw (4.60, 2) rectangle (5.60, 2.1333);
        \draw[right] (10, 1) node {$L_k^2$};
        \draw [decorate,decoration={brace,amplitude=10pt}] (10, -.2) -- (0, -.2) node [black, midway, yshift=-0.6cm] {$5 L_k^2$};
        \draw [decorate,decoration={brace,amplitude=3pt}] (7, 1.6) -- (7, 1.0666) node [black, midway, xshift=1cm] {$L_k L_{\uck{k:threat_path} + 1}$};
        \draw [decorate,decoration={brace,amplitude=.8pt}] (5.65, 1.8666) -- (5.65, 1.7333) node [black, midway, xshift=.6cm] {$L_k L_{\uck{k:threat_path}}$};
      \end{tikzpicture}
    \caption{The final bound in the proof of Theorem~\ref{t:main}. The large boxes correspond to the displacements for $j \in J'$, while the...}
    \label{f:J_y}
  \end{figure}

  Given $w\in \mathbb{R}^2$ and $y \in (w+ [0, L_k^2) \times \{0\} )\cap \mathbb{L}$, note that $(Y^y_t)_{t \in [0, L_k^2]}$ is an allowed path.
  We denote by $J^y \subseteq \{0, \dots, L_k / L_{\uck{k:threat_path}+1} - 1\}$ the set of indices $j$ for which the point $\lfloor Y^y_{j L_k L_{\uck{k:threat_path}+1}}\rfloor_{L_k L_{\uck{k:threat_path}}}$ is $(L_k L_{\uck{k:threat_path}}, l_{\uck{k:threat_path}})$-threatened so that, on the event $G_2(w)$, the set $J^y$ has at least $L_k/(2 L_{\uck{k:threat_path}+1})$ elements.

  Suppose now that $G_1 (w) \cap G_2(w)$ occurs.
  Then, given a point $y \in (w + [0, L_k^2) \times \{0\}) \cap \mathbb{L}$, we can estimate
  \begin{align*}
    X^y_{L_k^2} - \pi_1(y) & = \sum_{j = 0}^{L_k / L_{\uck{k:threat_path}+1} - 1} X^{y}_{(j + 1) L_k L_{\uck{k:threat_path} + 1}} - X^y_{j L_k L_{\uck{k:threat_path} + 1}}\\
    & = \sum_{j \in J^y} \big[ X^{y}_{(j + 1) L_k L_{\uck{k:threat_path} + 1}} - X^y_{j L_k L_{\uck{k:threat_path} + 1}} \big] \\
&\quad+\ \sum_{j \not \in J^y} \big[X^{y}_{(j + 1) L_k L_{\uck{k:threat_path} + 1}} - X^y_{j L_k L_{\uck{k:threat_path} + 1}}\big].
\end{align*}
 Since we are on $G_1(w)$, $X^{y}_{(j + 1) L_k L_{\uck{k:threat_path} + 1}} - X^y_{j L_k L_{\uck{k:threat_path} + 1}} \leq (v_+ + \eta) L_{\uck{k:threat_path}+1} $, so
\begin{align*}
 X^y_{L_k^2} - \pi_1(y)   & \leq \sum_{j \in J^y} \big[X^{y}_{(j + 1) L_k L_{\uck{k:threat_path} + 1}} - X^y_{j L_k L_{\uck{k:threat_path} + 1}}\big]
  \\&\quad+\ \Big( \frac{L_k}{L_{\uck{k:threat_path} + 1}} - |J^y| \Big) (v_+ + \eta) L_k L_{\uck{k:threat_path} + 1}.
\end{align*}
 Now, for $j \in J^y$, $\lfloor X^y_{jL_k L_{\uck{k:threat_path}}}\rfloor$ is $(L_k L_{\uck{k:threat_path}},l_{\uck{k:threat_path}})$-threatened.
    Furthermore, since $G_1(w)$ occurs, Lemma \ref{l:delays} guarantees that \[X^{y}_{(j + 1) L_k L_{\uck{k:threat_path} + 1}} - X^y_{j L_k L_{\uck{k:threat_path} + 1}} \leq (v_+ - \delta/2l_{\uck{k:threat_path}})L_k L_{\uck{k:threat_path}}\] and we can estimate
\begin{align*}
  X^y_{L_k^2} - \pi_1(y)  & \leq |J^y| \big( v_+ - \delta/(2l_{\uck{k:threat_path}}) \big) L_k L_{\uck{k:threat_path} + 1} + \Big( \frac{L_k}{L_{\uck{k:threat_path} + 1}} - |J^y| \Big) (v_+ + \eta) L_k L_{\uck{k:threat_path} + 1} \\
    & \leq v_+L_k^2 - |J^y|(\delta/2l_{\uck{k:threat_path}}) L_k L_{\uck{k:threat_path} + 1} + \Big( \frac{L_k}{L_{\uck{k:threat_path} + 1}} - |J^y| \Big)  \eta L_k L_{\uck{k:threat_path} + 1}\\
    & \leq  v_+ L_k^2 - (\delta/4l_{\uck{k:threat_path}} - \eta/2)L_k^2   = \big( v_+ - \eta/2 \big) L_k^2.
  \end{align*}

   The fact that $\sup_{w\in[0,1)\times\{0\}}\mathbb{P}(G_1 (w) \cap G_2(w))$ converges to one proves \eqref{e:then_delay} which, in turn, contradicts the definition of $v_+$.
  This proves that $v_- = v_+$ and, consequently, the proof of Theorem~\ref{t:main} follows immediately from Lemma~\ref{l:deviation_interp}.
\end{proof}

\section{Proof of Theorem~\ref{t:positive}}\label{s:positive}

In this section we prove Theorem~\ref{t:positive}, which gives us conditions to conclude that the speed of the random walker is strictly positive.

\nc{c:no_stop}
\begin{proof}[Proof of Theorem~\ref{t:positive}]
  Since the hypotheses of Theorem~\ref{t:main} are satisfied, we already have a law of large numbers for the random walker, as well as a deviations bound for its asymptotic speed.
  Therefore, all we have to do is to establish the sign of the random walker's speed.

  For this proof we will need two exponents $\beta$ and $\gamma$ satisfying
  \begin{equation}
    \label{e:beta_gamma}
    \beta \in \Big( 5, \alpha -\frac{14}{4} \Big) \qquad \text{and} \qquad \gamma \in \Big( \frac{4 + \beta}{2 + 4 \alpha}, \frac{1}{4} \Big).
  \end{equation}
  The need for these exact requirements will become clear later in the proof.
  For now, all one needs to observe is that this is possible since we assumed that $\alpha > 5+14/4=8.5$: indeed, one can choose $\beta$ and $\beta'$ satisfying
  \begin{equation}
    5 < \beta < \beta' < \alpha-\frac{14}{4} \text{ and then take } \gamma = \frac{4 + \beta'}{2 + 4 \alpha}.
  \end{equation}
  Note that for the above we had to assume $\alpha > 8.5$, although we believe that this number has no intrinsic meaning and could be improved upon.

  The statement of the theorem contemplates two cases: random walkers that can only jump to the right or to the left.
  Of course these two cases are symmetrical, so that we can prove only one of these claims and here we consider random walkers that can only jump to the left, since this will make it easier for us to employ lemmas from Section~\ref{s:deviations}. The running assumption is thus
\begin{equation}\label{e:negativestep}
\mathbb{P}\big[X^{o}_1\leq -1\big]>0.
\end{equation}

  We start by proving that it is very hard for the random walk to remain still, or more precisely, that there exists a constant $\uc{c:no_stop} > 0$ such that
  \begin{equation}
    \label{e:has_to_move}
    \mathbb{P} \big[ X^{o}_L = 0 \big] \leq \uc{c:no_stop} L^{-\alpha},
    \text{ for every $L \geq 1$}.
  \end{equation}
  To see why the above is true, we first define
  \begin{equation}
    q_k := \mathbb{P} \big[ X^{o}_{3^k} = 0 \big], \text{ for $k \geq 0$}.
  \end{equation}
  Then, the hypothesis \eqref{e:negativestep} guarantees that $q_0 <1$.
  Moreover, since the random walker can only jump in one direction, we have that
  \begin{equation}
    \big[ X^{o}_{3^{k+1}} = 0 \big] \subseteq \big[ X^{o}_{3^k} = 0 \big] \cap \big[ X^{(0, 2 \cdot 3^k)}_{3^k} = 0 \big].
  \end{equation}
  To conclude the proof of \eqref{e:has_to_move}, we follow exactly the same arguments as in the proof of Lemma~\ref{l:threatened}, observing that we fix $H = 1$ (so that we can ignore all the statements about uniformity on $H$) and we replace $\uc{c:some_trapped}$ by $1-\mathbb{P}[X^{o}_1 = 0]$.
  With these observations in mind, the proof of Lemma~\ref{l:threatened} applies directly to show \eqref{e:has_to_move}.

  Before proving that the random walker has a negative speed, we first prove a sub-linear bound, or more precisely, we claim that
  \begin{equation}
    \label{e:move_polynomial}
    \mathbb{P} \Big[ \frac{X^{o}_L}{L} > - L^{-\gamma} \Big] \leq c L^{2 - 2\gamma - \alpha \gamma}.
  \end{equation}
  First, let us observe that the above event is contained in
  \begin{equation}
    \bigg[
    \begin{array}{c}
      \text{for some $i \in \{0, L^{\gamma}, 2 L^{\gamma}, \dots \} \cap [0, L]$, and some}\\
      \text{integer $x \in \{-L^{1-\gamma}, \dots, 0\}$, we have }
      X^{(x, iL^\gamma)}_{L^{\gamma}} = 0
    \end{array}
    \bigg].
  \end{equation}
  In fact, if we are on not on the above event, the random walker has to make at least one jump to the left every $L^{\gamma}$ steps (as long as it remains on the right of $-L^{1-\gamma}$), and therefore $X^{o}_{L} \leq -L^{1 - \gamma}$.
  Therefore we can use a union bound to show that
  \begin{equation}
    \label{e:sub_linear_bound}
    \mathbb{P} \Big[ \frac{X^{o}_L}{L} > -L^{-\gamma} \Big] \leq 5 L^{2 - 2\gamma} \mathbb{P}\big[ X^{o}_{L^{\gamma}} = 0 \big] \overset{\eqref{e:has_to_move}}\leq c L^{2 - 2\gamma - \alpha \gamma},
  \end{equation}
  establishing \eqref{e:move_polynomial}.

  We will now use a renormalization to bootstrap the statement \eqref{e:move_polynomial} (which has a vanishing speed) into our desired negative speed result.
  The strategy to show a negative upper bound on the speed of the random walker is very similar to the one used to prove Lemma~\ref{l:deviation_reduction}.
  However, our task now will be much simpler since we can already count on Lemmas~\ref{l:cascade} and \ref{l:inductive} that have been proved in Section~\ref{s:deviations}.
  But first, let us recall some notation.

  Let $l_k$ and $L_k$ be defined as in \eqref{e:L_k}, fix $h = 1$ and recall the notations $B_m$ and $I_m$ introduced right after \eqref{e:M_h_k}.
  Recall also the definitions of $A_m(v)$ in \eqref{e:A_m_v} and $p_H(v)$ in \eqref{e:p_k}.

  As we have mentioned, we are now going to use some results of Section~\ref{s:deviations}.
  For this we recall that our random environment was assumed to satisfy $\mathscr{D}(\uc{c:decouple}, \alpha)$, which clearly implies the weaker $\mathscr{D}(\uc{c:decouple}, \beta)$.
  The reason why we will make use of this weaker decoupling condition is because it makes the hypothesis of Lemma~\ref{l:inductive} weaker as well.

  Recalling that $\beta > 5$, we can see that Lemma~\ref{l:inductive} can be applied in the current context, giving that
  \begin{display}
    if for some $\tilde{k} \geq \uc{c:inductive}$ and $v \in \mathbb{R}$ we have $p^1_{\tilde{k}}(v) \leq L_{\tilde{k}}^{-\beta/2}$, then it holds that $p^1_{k}(v_k) \leq L_k^{-\beta/2}$ for every $k \geq \tilde{k}$,
  \end{display}
  where $v_k$ is defined through: $v_{\tilde{k}} = v$ and $v_{k + 1} = v_k + 8/{l_k}$, for $k \geq \tilde{k} + 1$, similarly to~\eqref{e:v_k_o}.

Note that in Lemma~\ref{l:inductive} we needed to assume $k\geq k_1(v)$, in order for the speeds $v_k$ we considered to be defined and larger than $v_+$. Here this assumption will be replaced by the second condition in \eqref{e:tilde_p_bound} below.

  In view of the above, all we need to prove now is that there exists some scale $\tilde{k} \geq \uc{c:inductive}$ and some initial speed $v < 0$, for which
  \begin{equation}
    \label{e:tilde_p_bound}
    p^1_{\tilde{k}}(v) \leq L_{\tilde{k}}^{-\beta/2} \text{ and } v < - 2 \sum_{k \geq \tilde{k}} \frac{8}{l_k},
  \end{equation}
  the last condition being important because it implies that $\sup_k v_k < 0$, leading to a negative upper bound on the speed.

  To finish the proof, let us find the initial speed $v$ and the scale $\tilde{k}$ as required.
  We first estimate the decay of the sum in \eqref{e:tilde_p_bound} by noting that, since $l_{k + 1} \geq 2 l_k$ for every $k \geq 1$ we have
  \begin{equation}
    2 \sum_{k \geq k'} \frac{8}{l_k} \leq \frac{32}{l_{k'}}.
  \end{equation}
  Therefore, if we take $v_i = -L_i^{-\gamma}$, we get
  \begin{equation}
    \label{e:p_i_decay}
    p_i^1(v_i) \leq \mathbb{P} \Big[ \frac{X_{L_i}^{o}}{L_i} > -L_i^{-\gamma} \Big] \overset{\eqref{e:sub_linear_bound}}\leq c L_i^{2 - \gamma(2 + \alpha)} \overset{\eqref{e:beta_gamma}, k \text{ large}}\leq L_i^{-\beta/2}.
  \end{equation}
  Using the fact that $\gamma < 1/4$, we conclude that for large enough $i$
  \begin{equation}
    v_i \leq - \frac{32}{l_i} \leq -2 \sum_{j \geq i} \frac{8}{l_j}.
  \end{equation}
  Finally, we use the above bound, together with \eqref{e:p_i_decay} to conclude that for some $i$ large enough we can set $v = v_i$ and $\tilde{k} = i$ satisfying all the requirements of \eqref{e:tilde_p_bound}.
  This finishes the proof of the theorem.
\end{proof}

\section{Applications}\label{s:applications}
In this section we present applications of our main results, Theorems~\ref{t:main} and \ref{t:positive}. In some cases we just prove that a given process satisfies $\mathscr{D}_{\mathrm{env}}(c_0,\alpha)$ for some $\alpha$ large enough. Remark~\ref{rem:Poissontimes} then allows to apply our results for a large class of random walkers in that environment.

\subsection{The contact process}
\label{ss:contact}

Random walks on supercritical contact process have been studied in various papers such as \cite{2012arXiv1209.1511D, mountford2015, Bethuelsen2018}.
In \cite{mountford2015}, the authors prove a Law of Large Numbers and a Central Limit Theorem for such random walks under quite general assumptions.
As a good illustration of the applicability of our methods we give a new proof of the law of large numbers for this model in dimension 1.
As we have mentioned before the proof of a central limit theorem is still beyond the scope of our techniques.

Here we refrain from introducing the full notation and some classical auxiliary results for the contact process and refer to \cite{liggett} or Section~2 of \cite{mountford2015} for its definition via graphical construction and for the proofs of some of these results.
For $x,y\in\mathbb{Z}$ and $s,t\in\mathbb{R}$, we write $(x,t)\leftrightarrow(y,s)$ if the two space-time points are connected through the percolation structure on $\mathbb{Z}\times\mathbb{R}$ induced by the graphical representation (note that time is allowed to assume negative values).
We use a similar notation for denoting connection between subsets of $\mathbb{Z}\times \mathbb{R}$.

Fix an infection rate $\lambda > 0$.
For $A\subset\mathbb{Z}$ and $t\in\mathbb{R}$ we define
\begin{equation}
\eta_s^{A,t}(x):=\ind_{A\times\{t\}\leftrightarrow(x,t+s)},\quad s\geq 0, x\in\mathbb{Z}.
\end{equation}
The process $\eta^{A,t}$ is called the contact process started from $A$ at time $t$.
Similarly we define its dual $\hat{\eta}^{A,t}$
\begin{equation}
\hat{\eta}_s^{A,t}(x):=\ind_{A\times\{t\}\leftrightarrow(x,t-s)},\quad s\geq 0, x\in\mathbb{Z}.
\end{equation}
One can show that $\eta^{A,t}$ and $\hat{\eta}^{A,t}$ have the same distribution, that is, the contact process is self-dual.
Also note that
\begin{equation}
\label{e:self-duality}
\eta_s^{\mathbb{Z},t}(x)=\ind_{\hat{\eta}^{\{x\},t+s}_s\neq\underline{0}},
\end{equation}
where $\underline{0}$ stands for the configuration in $S^{\mathbb{Z}}$ whose all coordinates are null.
Finally we define the contact process with upper invariant measure by
\begin{equation}
\eta_t(x):=\ind_{\hat{\eta}_s^{\{x\},t}\neq \underline{0}\ \forall s\geq 0}.
\end{equation}
We want to prove a decoupling inequality for the environment $\eta$  when $\lambda>\lambda_c$, the critical infection rate (i.e. above which $\eta$ is not identically $\underline{0})$.
For that, the following classical result is going to be useful.
\begin{proposition}[\cite{durrett-griffeath83}]
  \label{p:survive_then_die}
  For every $\lambda > \lambda_c$ there exists $c=c(\lambda)>0$ such that
  \begin{equation}
    \mathbb{P}[\eta^{\{0\},0}_t \neq \underline{0}, \text{ but $\, \eta^{\{0\},0}_s = \underline{0}$\, for some $s > t$}] \leq c^{-1} \exp\{ -c t \},
  \end{equation}
  for every $t\geq 0$.
\end{proposition}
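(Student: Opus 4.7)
The plan is to establish exponential decay of $\mathbb{P}[t < \tau < \infty]$, with $\tau := \inf\{s \geq 0 : \eta^{\{0\},0}_s = \underline{0}\}$, by comparing the contact process to a supercritical oriented percolation and then using the exponential tail of its finite clusters. First I would invoke the Bezuidenhout-Grimmett block construction adapted to the $1+1$-dimensional contact process (or, more elementarily, the Durrett-Griffeath block argument specific to dimension one): for any $\lambda > \lambda_c$ and any $p < 1$, one can choose scales $L, T > 0$ such that the ``success'' events $E_{x,n}$ --- roughly, that a block containing at least one infection in $[xL,(x+1)L) \times \{nT\}$ transmits infection to both neighboring blocks at time-level $(n+1)T$ within a bounded space-time region --- are $k$-dependent (with $k$ finite) and satisfy $\mathbb{P}[E_{x,n}] \geq p$. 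By the standard Liggett-Schonmann-Stacey stochastic domination for finite-range dependent fields, the random set $\{(x,n) : E_{x,n} \text{ holds}\}$ dominates a supercritical Bernoulli oriented site percolation on $\mathbb{Z} \times \mathbb{Z}_+$.

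Next I would translate the event of interest into a cluster statement in this dominated percolation. On $\{\eta^{\{0\},0}_t \neq \underline{0},\ \eta^{\{0\},0}_s = \underline{0} \text{ for some } s > t\}$, the oriented cluster $C$ of $(0,0)$ in the dominated percolation must be finite (an infinite cluster would, by construction, keep the contact process alive forever) yet must reach time-level at least $\lfloor t/T \rfloor$. Hence the probability in question is bounded above by $\mathbb{P}[\lfloor t/T\rfloor \leq \mathrm{diam}(C) < \infty]$ in a $p$-supercritical oriented percolation. For $p$ close enough to $1$, the classical exponential tail $\mathbb{P}[n \leq \mathrm{diam}(C) < \infty] \leq (c')^{-1} e^{-c' n}$ --- proved via a Peierls-type contour argument around the outer boundary of a finite cluster, or via the BK inequality applied to disjoint dual circuits --- yields the stated bound of the form $c^{-1} e^{-c t}$.

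The main obstacle is the block construction itself: tuning $L$ and $T$ so that the $E_{x,n}$ are simultaneously uniformly very likely and finite-range dependent is delicate, and rests on the nontrivial input that on survival the contact process attains a macroscopic density of infected sites in linear time (itself a consequence of the shape theorem / complete convergence theorem in $d=1$). However, since this construction is standard in the literature and the proposition is being cited rather than reproved, I would import these inputs as black boxes and devote the actual write-up to the clean passage from the cluster-diameter bound to the exponential estimate in $t$, ensuring that the constant $c$ depends only on $\lambda$ through the choice of $L$, $T$ and $p$.
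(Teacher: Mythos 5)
The paper does not prove this proposition at all---it is imported verbatim from Durrett--Griffeath---so there is no internal argument to compare yours against; your sketch must stand on its own. It follows the standard strategy (block construction plus exponential tail for finite clusters of supercritical oriented percolation), but it contains one genuine gap: the claim that on the event $\{\eta^{\{0\},0}_t \neq \underline{0}\}$ the cluster $C$ of the origin in the dominated percolation ``must reach time-level at least $\lfloor t/T\rfloor$''. The stochastic domination transfers information in only one direction: an open oriented path among the block events $E_{x,n}$ forces the contact process to stay alive, but survival of the contact process up to time $t$ does not force an open path up to level $\lfloor t/T\rfloor$---the infection can perfectly well limp through blocks where the $E_{x,n}$ fail. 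Consequently the event inclusion underlying your bound by $\mathbb{P}[\lfloor t/T\rfloor \leq \mathrm{diam}(C) < \infty]$ is false as stated, and this is precisely the delicate point of the proposition (the naive one-directional comparison only yields $\mathbb{P}[t<\tau<\infty]\leq 1-\theta$ for a constant $\theta>0$, with no decay in $t$).

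The standard repair is a restart argument: run the block construction seeded at the origin; if its oriented cluster dies at some level $m$ while the contact process is still alive, reseed the construction at an infected site at level $m+1$, and iterate. On $\{t<\tau<\infty\}$ every attempt must fail to percolate, successive attempts fail with conditional probability bounded away from $1$, and the failed clusters must together span at least $\lfloor t/T\rfloor$ levels; since each failed cluster's height has an exponential tail, a geometric-sum/Chernoff estimate then yields the stated $c^{-1}e^{-ct}$. (In $d=1$ one may instead argue as in the original reference, via the right and left edge processes started from half-lines, the coupling identity expressing $\eta^{\{0\},0}_t$ as $\eta^{\mathbb{Z},0}_t$ restricted to the interval between the edges on the event of survival, and large deviations for the edge speeds.) With either of these ingredients added, your outline becomes a correct proof; without one of them, the central step does not go through.
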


We now prove the decoupling inequality for the contact process in the upper invariant measure.
\begin{proposition}
  For any $\lambda > \lambda_c$ and $\alpha > 0$, the process $\eta_t$ satisfies $\mathscr{D}_{\mathrm{env}}(\uc{c:decouple}, \alpha)$ for some $\uc{c:decouple} > 0$.
\end{proposition}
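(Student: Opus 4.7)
The plan is to couple $\eta$ restricted to $B_i$ with a truncated version $\tilde\eta^i$ that depends on the graphical representation only in a finite time-strip of width less than $r$. Because the strips corresponding to $B_1$ and $B_2$ are time-disjoint, the two truncations are independent, and Proposition~\ref{p:survive_then_die} provides exponential-in-$r$ control on the approximation error. Since exponential decay dominates every polynomial rate, this yields $\mathscr{D}_{\mathrm{env}}(\uc{c:decouple},\alpha)$ for every $\alpha>0$.

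Write $B_i=[a_i,b_i)\times [c_i,d_i)$ with $c_2-d_1\ge r$, set $T:=r/2$, and, for $(x,t)\in B_i$, define
\begin{equation*}
\tilde\eta^i_t(x):=\ind_{\{(x,t)\leftrightarrow \mathbb{Z}\times\{c_i-T\}\}},
\end{equation*}
i.e.\ the value at $(x,t)$ of the contact process started from the all-infected configuration at time $c_i-T$. Then $\tilde\eta^i|_{B_i}$ is a measurable function of the graphical Poisson marks lying in $\mathbb{Z}\times[c_i-T,d_i]$. Since $c_2-T>d_1$, the strips for $i=1$ and $i=2$ are disjoint, so $\tilde\eta^1|_{B_1}$ and $\tilde\eta^2|_{B_2}$ are independent. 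Letting $\tilde f_i$ denote the result of evaluating $f_i$ on $\tilde\eta^i|_{B_i}$ in place of $\eta|_{B_i}$, we get $\Cov(\tilde f_1,\tilde f_2)=0$ and, since $f_i,\tilde f_i\in[0,1]$,
\begin{equation*}
|\Cov(f_1,f_2)|\le 2\,\mathbb{P}[A_1]+2\,\mathbb{P}[A_2],\qquad A_i:=\{\eta|_{B_i}\ne \tilde\eta^i|_{B_i}\}.
\end{equation*}

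To estimate $\mathbb{P}[A_i]$, monotonicity of the graphical construction gives $\tilde\eta^i_t(x)\ge \eta_t(x)$, and the self-duality relation \eqref{e:self-duality} implies that the two disagree at $(x,t)$ precisely when the dual $\hat\eta^{\{x\},t}$ is non-empty at time $t-c_i+T\ge T$ but goes extinct later. At a single point $(x,t)$ this event has probability at most $c^{-1}e^{-cT}$ by Proposition~\ref{p:survive_then_die}. To upgrade this pointwise bound to one on all of $A_i$, I would use that $\eta_{\cdot}(x)$ and $\tilde\eta^i_{\cdot}(x)$ are c\`adl\`ag and change only at the graphical Poisson marks touching the site $x$, so that checking equality on $B_i$ reduces to checking it at the initial time $c_i$ and at the finitely many such marks; a Campbell-type union bound over the at most $5r+1$ sites in the spatial projection of $B_i$ and the $O(r)$ expected marks per site then gives
\begin{equation*}
\mathbb{P}[A_i]\le c\, r^2 e^{-cr/2},
\end{equation*}
which is bounded by $\uc{c:decouple}\, r^{-\alpha}$ for any fixed $\alpha>0$ after absorbing the finitely many small values of $r$ into the constant.

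The only real subtlety is the rigorous handling of the continuum of test times $t\in[c_i,d_i)$ in the last union bound; but the exponential rate $e^{-cr/2}$ leaves so much slack with respect to any polynomial factor that essentially any discretization driven by the Poisson marks of the graphical construction suffices. Self-duality and Proposition~\ref{p:survive_then_die} do the rest of the work.
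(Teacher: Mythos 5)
Your proposal is correct in its core mechanism and it is the same mechanism as the paper's: replace the field on the later box by the contact process restarted from the all-infected configuration at an intermediate time, use the disjointness of the relevant time-slabs of the graphical representation to get exact independence, and control the coupling error via self-duality together with Proposition~\ref{p:survive_then_die}. Two differences are worth noting. First, you symmetrize and truncate both boxes, whereas the paper truncates only the upper box and lets $f_1$ depend on the entire past $\sigma(\eta_s(x);\, s\le t)$; both are fine, and the resulting covariance bounds differ only by a factor of $2$. Second, and more substantively, you handle the event $\{\eta\ne\tilde\eta^i \text{ somewhere in } B_i\}$ by a union bound over all graphical marks in the box, which forces you into a Mecke/Palm-type computation (the disagreement probability \emph{at} a mark is not literally the stationary one, since conditioning on a mark at $(x,\tau)$ changes the configuration there); you correctly flag this as the delicate point but do not carry it out. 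The paper sidesteps the continuum entirely: it checks agreement of $\eta$ and $\eta'$ only on a single time slice $\mathbb{R}\times\{t+r\}$ over a window $[z-100r,z+100r]$, and adds the event that $B_2$ is connected to the complement of that window, controlled by a finite-speed-of-propagation (Poisson large deviation) estimate. Since both processes use the same graphical marks above that slice, agreement on the window at one time plus absence of outside influence forces agreement on all of $B_2$, so only $O(r)$ deterministic points need to be union-bounded. If you want to keep your two-sided truncation, I would recommend replacing your mark-by-mark union bound with this single-slice-plus-propagation argument at the slice $\mathbb{R}\times\{c_i\}$; with that substitution your proof is complete, and the exponential rate indeed beats any polynomial $r^{-\alpha}$.
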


\begin{proof}
  Fix $t, r > 0$ and let $B_2 \subset \mathbb{L}$ be a box of the form $[z, z + 5 r] \times [t + r, t + 6r] \cap \mathbb{L}$. 
  Let also $f_1$ and $f_2$ be two functions measurable with respect to $\sigma(\eta_s(x); s \leq t, x \in \mathbb{Z})$ and $ \sigma(\eta_s(x); (x, s) \in B_2)$ respectively and such that $f_i (\cdot) \in [0,1]$ for $i=1,2$.
It is enough to bound the covariance between functions of this type.

  Now, let us introduce another process $(\eta'_s)_{s \geq t}$ as follows
  \begin{equation}
    \eta'_s(x) = \eta^{\mathbb{Z},t}_{s-t}(x)=\ind_{\hat{\eta}^{\{x\},s}_{s-t}\neq \underline{0}},
  \end{equation}
   the contact process started from the fully infected configuration at time $t$, or in other words, the set of points $(x,s)$ in  the ``semi-plane'' $s\geq t$ from which the dual process survives down to time $t $.

  Note that the process $(\eta'_s)_{s \geq t+ r}$ is independent of $(\eta_s)_{s \leq t}$.
  Therefore we can bound
  \begin{equation*}
    \begin{split}
      \Cov(f_1, f_2) & \leq \mathbb{P} [\eta'_s(x) \neq \eta_s(x) \text{ for some $(x, s) \in B_2$}]\\
      & \leq \mathbb{P} \big[ \eta'_{t + r}(x) \neq \eta_{t + r}(x) \text{ for some } x \in [z - 100 r, z + 100 r] \big]\\
      & \quad + \mathbb{P} \big[ \exists\, y \not \in [z - 100 r, z + 100 r],\ \exists\, (x,s)\in B_2\ \colon\ (y,t+r)\leftrightarrow(x,s) \big]\\
      & \leq c \exp\{ - c' r \},
    \end{split}
  \end{equation*}
  where in the last inequality we have used a simple large deviations estimate for a Poisson random variable to bound the second term, and the self-duality property \eqref{e:self-duality} together with Proposition~\ref{p:survive_then_die} to bound the first term.
\end{proof}

\subsection{Systems with a positive spectral gap}
\label{ss:spectral}

In this section we treat environments that satisfy a few hypotheses falling into the $L^2$-theory of stochastic processes.
More precisely, let us assume that
\begin{enumerate}[\quad a)]
\item $(\eta_t)_{t \geq 0}$ is a c\`adl\`ag Markov process on $S^\mathbb{Z}$.
\item $(\eta_t)_{t \geq 0}$ has a stationary measure $\nu$ and a semi-group $(S_t)_{t\geq 0}$ satisfying $S_tf(\eta)=\mathbb{E}_\eta[f(\eta_t)]$ that is strongly continuous in $L^2(\nu)$.
\item The generator $L$ of the process has a positive spectral gap.
\end{enumerate}

The following result is standard.

\begin{proposition}
  Under the above hypotheses, there exists $\beta > 0$ such that
  \begin{equation}
    \label{e:gap_decouple}
    \Big\lVert S_t(f) - \int f d \nu \Big\rVert_\nu \leq e^{-\beta t} \lVert f \rVert_\nu,
  \end{equation}
  where $\lVert \cdot \rVert_\nu$ is the $L^2$-norm associated with $\nu$.
\end{proposition}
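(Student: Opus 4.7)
The plan is to reduce to mean-zero functions via stationarity of $\nu$, and then exploit the spectral gap through a differential inequality on $t \mapsto \|S_t g\|_\nu^2$.

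First, setting $\bar f := f - \int f \, d\nu$, stationarity of $\nu$ together with $S_t \mathbf{1} = \mathbf{1}$ gives $S_t f - \int f \, d\nu = S_t \bar f$. It therefore suffices to show that $\|S_t g\|_\nu \leq e^{-\beta t} \|g\|_\nu$ for every $g$ in the closed subspace $E_0 := \{g \in L^2(\nu) : \int g \, d\nu = 0\}$; the proposition then follows since $\|\bar f\|_\nu \leq \|f\|_\nu$ (the orthogonal projection onto the orthocomplement of the constants is a contraction). Note also that by stationarity each $S_t$ preserves $E_0$.

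Next, I would interpret the positive spectral gap hypothesis as the Poincar\'e-type inequality $\beta \|g\|_\nu^2 \leq -\mathrm{Re}\,\langle L g, g\rangle_\nu$ valid for every $g \in D(L) \cap E_0$. For such $g$, strong continuity of the semigroup ensures that $t \mapsto S_t g$ is $C^1$ in $L^2(\nu)$ with derivative $L S_t g \in E_0$, so that
\[
\frac{d}{dt} \|S_t g\|_\nu^2 = 2\,\mathrm{Re}\,\langle L S_t g, S_t g\rangle_\nu \leq -2\beta \|S_t g\|_\nu^2.
\]
Gronwall's inequality then yields $\|S_t g\|_\nu \leq e^{-\beta t} \|g\|_\nu$ on the dense subspace $D(L) \cap E_0$. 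Since each $S_t$ is a contraction on $L^2(\nu)$ (Jensen's inequality applied to the stationary measure) and $D(L)$ is dense in $L^2(\nu)$ by Hille--Yosida, the bound extends by continuity to all $g \in E_0$.

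The only delicate point is pinning down what ``positive spectral gap'' means: in the reversible case where $L$ is self-adjoint on $L^2(\nu)$, the conclusion follows directly from the spectral theorem applied to $L$ restricted to $E_0$, bypassing the Gronwall argument altogether; in the non-reversible case, the gap is usually identified with the Poincar\'e constant above (equivalently, the spectral gap of the symmetric part $(L+L^*)/2$), and the argument proceeds exactly as described. As the statement is labeled standard I would not dwell on this point, merely citing the relevant functional-analytic facts.
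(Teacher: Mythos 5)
Your proof is correct and is exactly the standard argument the paper has in mind: the paper itself offers no proof, simply declaring the result standard, and your reduction to mean-zero functions followed by the Gronwall/spectral-theorem argument (with the density and contraction steps to extend from $D(L)\cap E_0$ to all of $E_0$) is the canonical way to fill it in. Your caveat about the meaning of ``positive spectral gap'' in the non-reversible case is well taken, though all of the paper's examples in this subsection (independent spin flips, Glauber dynamics for Ising, the East model) are reversible, so the self-adjoint spectral-theorem route suffices for the intended applications.
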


We now show that
\begin{proposition}
Let $\eta=(\eta_t)_{t\geq 0}$ be a Markov process with stationary measure $\nu$.
Assume that $\eta$ satisfies \eqref{e:gap_decouple}.
Then for any $\alpha>0$, there exists $\uc{c:decouple} > 0$ for which $\eta$ satisfies $\mathscr{D}_{\mathrm{env}}(\uc{c:decouple}, \alpha)$.
\end{proposition}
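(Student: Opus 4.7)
The plan is to exploit the Markov property to reduce the covariance bound to an $L^2$-contraction estimate for the semigroup, at which point the spectral gap assumption \eqref{e:gap_decouple} gives exponential decay, which is stronger than any polynomial rate.

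First I would set up the reduction. Let $B_1$ and $B_2$ be boxes as in Definition~\ref{d:envdecouple}; by the definition of time-distance we may write $B_1 \subset \mathbb{R} \times [a,b]$ and $B_2 \subset \mathbb{R} \times [c,d]$ with $c - b \geq r$. Let $f_1, f_2 \colon \Omega \to [0,1]$ be measurable with respect to the environment inside $B_1$ and $B_2$ respectively. Since $f_2$ is a measurable function of $(\eta_s)_{s \in [c,d]}$, the Markov property gives $\mathbb{E}[f_2 \mid \mathcal{F}_c] = \Phi(\eta_c)$, where $\Phi \colon S^\mathbb{Z} \to [0,1]$ is the function $\Phi(\eta) := \mathbb{E}_\eta[\tilde f_2((\tilde\eta_s)_{s \in [0, d-c]})]$, $\tilde\eta$ being the process started from $\eta$ at time $0$. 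In particular, $\|\Phi\|_\infty \leq 1$ and hence $\|\Phi\|_\nu \leq 1$. Applying the Markov property once more to pass from time $c$ to time $b$,
\begin{equation}
\mathbb{E}[f_2 \mid \mathcal{F}_b] \;=\; S_{c-b}\Phi(\eta_b).
\end{equation}

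Next I would rewrite the covariance using these identities. Since $\eta_c$ is distributed according to $\nu$ by stationarity, $\mathbb{E}[f_2] = \int \Phi\, d\nu$, and conditioning $f_1 f_2$ on $\mathcal{F}_b$ yields
\begin{equation}
\Cov(f_1,f_2) \;=\; \mathbb{E}\bigl[f_1 \bigl(S_{c-b}\Phi(\eta_b) - \tfrac{}{}{\textstyle\int} \Phi\, d\nu\bigr)\bigr].
\end{equation}
Because $|f_1|\leq 1$, bounding by the absolute value and then applying Cauchy--Schwarz with respect to $\nu$ (recall $\eta_b \sim \nu$),
\begin{equation}
|\Cov(f_1,f_2)| \;\leq\; \bigl\| S_{c-b}\Phi - {\textstyle\int}\Phi\,d\nu \bigr\|_\nu \;\leq\; e^{-\beta(c-b)}\,\|\Phi\|_\nu \;\leq\; e^{-\beta r},
\end{equation}
where the middle inequality is \eqref{e:gap_decouple}. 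Since exponential decay dominates any polynomial rate, for every $\alpha > 0$ we may pick $\uc{c:decouple}$ large enough so that $e^{-\beta r} \leq \uc{c:decouple} r^{-\alpha}$ for all $r \geq 1$, giving $\mathscr{D}_{\mathrm{env}}(\uc{c:decouple},\alpha)$.

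The only subtle point is the measurability of $\Phi$ and the fact that $\mathbb{E}[f_2\mid \mathcal{F}_b]$ genuinely has the form $S_{c-b}\Phi(\eta_b)$; this is standard given that $\eta$ is a c\`adl\`ag Markov process with strongly continuous semigroup on $L^2(\nu)$ and that $\Phi$ is bounded and measurable. No renormalisation or additional input from the paper is needed here: the entire argument is a one-step application of the spectral gap, and the main obstacle, if any, is simply bookkeeping when $B_1$ and $B_2$ are not perfectly time-ordered (which is not an issue under Definition~\ref{d:envdecouple} because $d(B_1, B_2) \geq r$ already imposes that one box precedes the other in time).
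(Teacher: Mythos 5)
Your proof is correct and follows essentially the same route as the paper's: condition $f_2$ on the state at the start of its box to get a function $\Phi$ (the paper's $\tilde f_2$), apply the Markov property again to write the relevant conditional expectation as $S_{c-b}\Phi$, and then use Cauchy--Schwarz together with the $L^2$-contraction \eqref{e:gap_decouple} to get the bound $e^{-\beta r}$, which dominates any polynomial rate. The only cosmetic difference is that the paper centers $f_2$ so that $\nu(\tilde f_2)=0$ from the outset, whereas you subtract $\int\Phi\,d\nu$ explicitly; the two are equivalent.
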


\begin{proof}
For an interval $I\subset \mathbb{R}$, let $D(I,S^\mathbb{Z})$ be the set of c\`adl\`ag paths from $I$ to $S^\mathbb{Z}$ and let us abbreviate $\eta_I:=(\eta_s)_{s\in I}$. 
Let $r\geq 1$, $T\geq 0$, $f_1:D([0,T],S^\mathbb{Z})\rightarrow[-1,1]$ and $f_2:D(\mathbb{R}_+,S^\mathbb{Z})\rightarrow[-1,1]$ with $\mathbb{E}_\nu[f_2(\eta)]=0$. 
It is enough to show that there exists $\beta>0$ such that for any such choice of $r,T,f_1,f_2$
  \begin{equation}\label{e:decouplespectralgap}
  \left|\mathbb{E}_\nu\left[f_1\left(\eta_{[0,T]}\right)f_2\left(\eta_{[T+r,+\infty)}\right)\right]\right|\leq e^{-\beta r}.
  \end{equation}
For $\eta_0\in S^\mathbb{Z}$, let $\tilde{f}_2(\eta_0):=\mathbb{E}_{\eta_0}\left[f_2\left(\eta_{\mathbb{R}_+}\right)\right]$. Note that $\nu(\tilde{f}_2)=0$. By the Markov property applied at times $T$ and $T+r$, the left-hand term in \eqref{e:decouplespectralgap} can be rewritten and bounded as follows
  \begin{multline}
  \Big|\mathbb{E}_\nu\big[f_1\left(\eta_{[0,T]}\right)S_r\tilde{f}_2(\eta_{T})\big]\Big|
  \leq \mathbb{E}_\nu\Big[\big(S_r\tilde{f}_2(\eta_{T})\big)^2\Big]^{1/2}=\nu\Big(\big(S_r\tilde{f}_2\big)^2\Big)^{1/2}\leq e^{-\beta r}.
  \end{multline}
\end{proof}
Above we used stationarity of $\nu$ for the equality sign and \eqref{e:gap_decouple} in the last inequality.

Examples that fall into this class are:
\begin{enumerate}[\quad a)]
\item Independent spin flip dynamics.
\item Glauber dynamics for the Ising model in $\mathbb{Z}$, see \cite{liggett}, Corollary~4.18, p. 210,
\item The ``East model'' that will be discussed in more detail in Section~\ref{ss:east}, see \cite{Aldous2002}.
\end{enumerate}

\subsection{The East model and its distinguished zero}
\label{ss:east}

The East model is a Markov process on $\lbrace 0,1\rbrace ^\mathbb{Z}$ that can be described as follows. Fix a parameter  $\rho\in(0,1)$. With rate one, each site tries to update: to a $1$ (occupied site) with probability $\rho$ and to a $0$ (empty site) with probability $1-\rho$. The update is successful at a site $x$ if and only if $x+1$ is empty at the time of the update. More formally, the generator of the process is given by
\begin{equation}
Lf(\eta)=\sum_{x\in\mathbb{Z}}(1-\eta(x+1))(\rho\eta(x)+(1-\rho)(1-\eta(x)))\left[f(\eta^x)-f(\eta)\right],
\end{equation}
where $\eta^x$ denotes the configuration $\eta$ flipped at $x$. The process can be constructed in the following way: attach to all sites in $\mathbb Z$ independent parameter $1$ Poisson processes; think of them as clocks ringing at exponential times to signal update possibilities. With every clock ring, associate independently a Bernoulli variable with parameter $\rho$. When a clock rings at $x$, check the state of its right neighbour $x+1$. If it is occupied, nothing changes; if it is empty, the configuration at $x$ is refreshed using the Bernoulli variable associated with the ring. In the latter case, the ring is called \emph{legal}.

This process was introduced in the physics literature \cite{eastphys} to model the glass transition. It is not difficult to check that the product Bernoulli measure $\nu=\mathrm{Ber}(\rho)^{\otimes\mathbb{Z}}$ is reversible for this dynamics. Moreover, it was shown in \cite{Aldous2002} that the East model has positive spectral gap at any density $\rho\in(0,1)$.

One of the tools that are useful in the study of this model is the so-called \emph{distinguished zero} \cite{Aldous2002}, a c\`adl\`ag process on $\mathbb{Z}$ which we now describe. Recall the graphical construction given above. Start the process from a configuration with a $0$, say at the origin, and call it the distinguished zero. Wait for the first legal ring at the origin. By definition of a legal ring, before that time the configuration does not change at the origin and we let the distinguished zero remain there. Again by definition, at the time of the first legal ring, the site $1$ is empty; we then make the distinguished zero jump one step to the right. Then we iterate the construction: the distinguished zero remains at $1$ until a legal ring occurs there and then jumps to site $2$. 
See Figure~\ref{fig:distzero} for an illustration.

\begin{figure}
\begin{center}
\includegraphics[scale=.5]{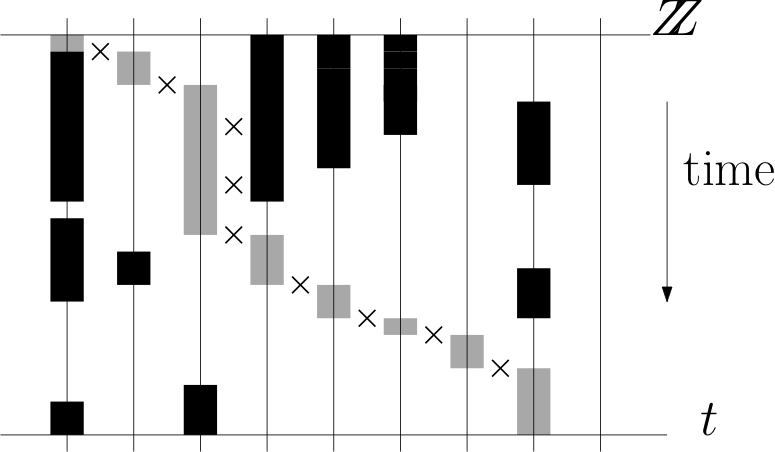}
\caption{\cite{frontprogression} In grey, a trajectory of a distinguished zero up to time t; time goes downwards, sites
are highlighted in black at the times when they are occupied. The crosses represent clock rings at the site occupied by the
distinguished zero.}
\label{fig:distzero}
\end{center}
\end{figure}
The interest of this object is that, because information travels from right to left in the East model, the distinguished zero acts as a buffer between the dynamics on its left and right. More precisely, conditional to the distinguished zero starting from the origin being at $x$ at time $t$, the distribution of the configuration on $\lbrace 0,\ldots,x-1\rbrace$ is exactly $\mathrm{Ber}(\rho)^{\otimes x}$, no matter what the initial configuration was (which had a zero at the origin). One application of our results is that the distinguished zero travels with positive speed to the right. 

\begin{proposition}\label{prop:distzero}
Start the East process with a distribution whose marginal on $\mathbb{N}$ is $\mathrm{Ber}(\rho)^{\otimes\mathbb N}$ and which puts a zero at the origin. Let $\xi_t$ be the position at time $t$ of the distinguished zero started from the origin. Then, a.s.\@
\begin{equation}
\frac{\xi_t}{t}\underset{t\rightarrow\infty}{\longrightarrow}v_d(\rho)>0.
\end{equation}
\end{proposition}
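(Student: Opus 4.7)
The plan is to realize the distinguished zero as a one-directional random walker in the framework of Section~\ref{s:notation}, verify the hypotheses of Theorem~\ref{t:positive}, and finally transfer the conclusion to the specific initial distribution of the proposition.

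\medskip
\noindent\textbf{Step 1 (setup).} Work with the stationary East model on $\mathbb{Z}$: let $\eta_0 \sim \nu := \mathrm{Ber}(\rho)^{\otimes \mathbb{Z}}$, driven by the standard graphical construction with independent rate-$1$ Poisson rings $(T_i^x)_{x \in \mathbb{Z},\, i \geq 1}$ and independent $\mathrm{Ber}(\rho)$ refresh marks. Use the rings themselves as the walker's allowed jump times, with $\ell = 1$ and
\[
g(\eta_{-1}, \eta_0, \eta_1, u) \;=\; \mathbf{1}_{\eta_1 = 0} \;\in\; \{0, 1\},
\]
so that at each $T_i^x$, if the walker sits at $x$, it jumps to $x+1$ if and only if the ring is \emph{legal} in the East sense. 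When started from any site where $\eta_0$ equals zero, this walker is exactly the corresponding distinguished zero.

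\medskip
\noindent\textbf{Step 2 (hypotheses).} Space-time invariance of $\mathbb{P}$ (Assumption~\ref{hyp:invariant}) is immediate from the stationarity of $\nu$ and the translation invariance of the graphical construction. Assumptions~\ref{hyp:v+v-} and~\ref{hyp:FT} follow from standard Poisson large deviations, as in Remark~\ref{rem:Poissontimes}. For the decoupling, the positive spectral gap of the East model \cite{Aldous2002} gives $\mathscr{D}_{\mathrm{env}}(c_0, \alpha)$ for every $\alpha > 0$ by Section~\ref{ss:spectral}; this is then upgraded to $\mathscr{D}(c_0, \alpha)$ for the joint law of $(\eta, (T_i^x))$ by observing that rings in $[T+r, \infty)$ are independent of the graphical data up to time $T$, while the spectral gap controls the $L^2$-distance between the law of $\eta_{T+r}$ conditional on $\eta_T$ and $\nu$. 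Finally, the positivity condition \eqref{e:one_jump} is verified directly: using the independence of $T_1^0$ from $\eta$ and stationarity of $\nu$,
\[
\mathbb{P}[X^o_1 \geq 1] \;\geq\; \mathbb{P}[T_1^0 \leq 1,\ \eta_{T_1^0}(1) = 0] \;=\; \int_0^1 e^{-t}\, \mathbb{P}(\eta_t(1) = 0)\, dt \;=\; (1 - \rho)(1 - e^{-1}) \;>\; 0.
\]

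\medskip
\noindent\textbf{Step 3 (transfer).} Theorem~\ref{t:positive} yields $X^o_t/t \to v > 0$ almost surely under $\mathbb{P}$. Since $\mathbb{P}(\eta_0(0) = 0) = 1 - \rho > 0$, the limit persists almost surely under $\mathbb{P}(\,\cdot \mid \eta_0(0) = 0)$; under this conditional law the marginal of $\eta_0$ on $\{1, 2, \dots\}$ is $\mathrm{Ber}(\rho)^{\otimes \mathbb{N}}$ and $X^o$ coincides with the distinguished zero from the origin. Because the East dynamics on $\mathbb{N}$ is autonomous (the flip rate at any $x \geq 0$ only depends on $\eta(x+1) \in \mathbb{N}$), the law of $(\xi_t)_{t \geq 0}$ depends solely on the marginal of the initial data on $\mathbb{N}$, which matches the proposition's assumption. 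Setting $v_d(\rho) := v > 0$ concludes the proof.

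\medskip
The main obstacle lies in the decoupling step: since the Poisson rings \emph{drive} the East dynamics, they are not independent of the environment and Remark~\ref{rem:Poissontimes} does not apply off the shelf. What is needed is a small adaptation of the spectral-gap argument of Section~\ref{ss:spectral} to the full graphical construction (environment and rings together), which is routine given that future rings are Poisson-independent of the past graphical data.
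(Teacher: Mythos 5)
Your proposal is correct and follows essentially the same route as the paper: realize the distinguished zero as a right-only walker with $g=1-\eta_{T_i^x}(x+1)$ on the stationary East process, check \eqref{e:one_jump} and the decoupling via the spectral gap, apply Theorem~\ref{t:positive}, and use the orientation of the dynamics to transfer the LLN to the proposition's initial law. If anything, you are more careful than the paper in flagging that the Poisson rings drive the dynamics, so the joint decoupling $\mathscr{D}(\uc{c:decouple},\alpha)$ requires the small adaptation of the spectral-gap argument you sketch rather than a direct appeal to Remark~\ref{rem:Poissontimes}.
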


We also partially recover a result of \cite{frontprogression,GLM15}.
\begin{proposition}\label{prop:front}
Start the East process from the product Bernoulli distribution with parameter $1$ on $-\mathbb{N}$, $0$ on the origin and $\rho$ on $\mathbb N$. Let $X_t$ be the position of the leftmost zero at time $t$. 
Then a.s.\@
\begin{equation}
\frac{X_t}{t}\underset{t\rightarrow\infty}{\longrightarrow}v_f (\rho)<0.
\end{equation}
\end{proposition}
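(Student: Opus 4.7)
Following the approach used for Proposition~\ref{prop:distzero}, my first step would be to observe that the leftmost zero $X_t$ is itself a nearest-neighbor random walk in the East environment. If $X_t = x$, then $\eta_t(y) = 1$ for every $y < x$ and $\eta_t(x) = 0$, so the only East updates that can change $X_t$ are: (a) a ring at $x-1$ with outcome $0$, always legal since $\eta_t(x) = 0$, occurring at rate $1-\rho$ and sending $X_t$ to $x-1$; and (b) a ring at $x$ with outcome $1$, legal iff $\eta_t(x+1) = 0$, occurring at rate $\rho\,\ind_{\eta_t(x+1) = 0}$ and sending $X_t$ to $x+1$ (since by legality $x+1$ is the new leftmost zero). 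Hence $X_t$ performs only nearest-neighbor jumps, fitting the kinematic constraints of the framework.

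To apply Theorem~\ref{t:main}, I would cast this in the setup of Section~\ref{s:notation}, constructing the walker's Poisson clocks $(T^x_i)_i$ by superposing the East clocks at sites $x$ and $x-1$, decomposed with an independent auxiliary randomisation to preserve disjointness across sites. Because the initial condition of the proposition is not translation invariant, I would work with the stationary East process $\bar\eta_t$ of product-Bern$(\rho)$ law and the monotone coupling $\eta_t \geq \bar\eta_t$, which is preserved by the East dynamics. By Section~\ref{ss:spectral}, $\bar\eta$ satisfies $\mathscr{D}_{\mathrm{env}}(c_0, \alpha)$ for every $\alpha$ thanks to the positive spectral gap, and Assumptions~\ref{hyp:v+v-}--\ref{hyp:FT} are immediate from the Poissonian clocks. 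A transfer argument using the monotone coupling then yields $X_t/t \to v_f$ a.s.\ for some $v_f \in [-1,1]$.

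To prove $v_f < 0$, I would write $X_t = R_t - L_t$, where $L_t$ and $R_t$ count left and right jumps respectively. Since the legality condition $\eta_t(X_t) = 0$ for left jumps is automatic, $L_t$ is a rate-$(1-\rho)$ Poisson process (independent of the environment), whence $L_t/t \to 1-\rho$ a.s. The right-jump rate $\rho\,\ind_{\eta_t(X_t+1)=0}$ is bounded pointwise by $\rho\,\ind_{\bar\eta_t(X_t+1)=0}$ thanks to the monotone coupling $\eta_t \geq \bar\eta_t$. An ergodic-theorem argument for the Markov chain consisting of the walker together with the environment seen from it in $\bar\eta$ then gives $\limsup R_t/t \leq \rho(1-\rho)$ a.s., since $\bar\eta$ has marginal density of zeros equal to $1-\rho$. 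Combining the two, $v_f \leq \rho(1-\rho) - (1-\rho) = -(1-\rho)^2 < 0$.

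The hardest step is the last bound on $R_t/t$: rigorously justifying $\limsup R_t/t \leq \rho(1-\rho)$ requires either establishing ergodicity of the environment-as-seen-from-walker Markov chain in $\bar\eta$, or else constructing a one-sided auxiliary walker in $\bar\eta$ whose right-jump process stochastically dominates $R_t$ and to which Theorem~\ref{t:positive} may be applied directly. This step is delicate because the right-jump rate depends on the environment at the walker's current position, which is itself a non-trivial functional of the joint dynamics.
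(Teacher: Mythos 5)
Your recasting of the front as a nearest-neighbour walker with left rate $1-\rho$ (legality at $X_t-1$ being automatic since $\eta_t(X_t)=0$) and right rate $\rho\,\ind_{\eta_t(X_t+1)=0}$, driven by the superposition of the clocks at $X_t$ and $X_t-1$, is exactly the paper's starting point (cf.\ Remark~\ref{r:front}). The two substantive steps, however, contain genuine gaps. The main one is that the tool you lean on throughout --- a coupling with the stationary process satisfying $\eta_t\geq\bar\eta_t$ that is ``preserved by the East dynamics'' --- does not exist: the East model, like kinetically constrained models in general, is \emph{not} attractive. Under the graphical construction take $\eta\geq\eta'$ with $\eta(x)=\eta'(x)=0$, $\eta(x+1)=1$ and $\eta'(x+1)=0$; a ring at $x$ with Bernoulli mark $1$ is legal for $\eta'$ but not for $\eta$, so afterwards $\eta(x)=0<\eta'(x)=1$ and the order is broken. (Even at time zero the domination fails at the origin, where $\eta_0(0)=0$ while $\bar\eta_0(0)=1$ with probability $\rho$.) This invalidates both your transfer of the LLN to the non-stationary initial condition and your pointwise bound on the right-jump rate. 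The paper handles the initial condition with the \emph{orientation} property instead: the law of the East process on a half-line $\{x,x+1,\dots\}$ depends only on the initial condition there, and the front only reads the environment to its right, so no attractivity is required.

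Second, the negativity of the speed is precisely the step you leave open, and neither of your suggested routes closes it within this framework. The bound $\limsup_t R_t/t\leq\rho(1-\rho)$ would require that the time-average of $\ind_{\eta_t(X_t+1)=0}$ be at most $1-\rho$; but the law of the environment seen from the front is \emph{not} product Bernoulli (identifying it is the main content of \cite{frontprogression,GLM15}, which this paper deliberately circumvents), and no ergodic theorem for that chain is available here --- indeed avoiding ``the environment seen from the walker'' is the whole point of the renormalisation machinery. The paper instead obtains $v_f<0$ by invoking Theorem~\ref{t:positive}, i.e.\ the one-sided criterion of Section~\ref{s:positive} in its mirror-image (left-jumping, strictly negative speed) form, applied to the recast walker; this yields strict negativity without computing any invariant measure. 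As written, your argument records the correct heuristic $v_f\leq\rho(1-\rho)-(1-\rho)$ but does not prove the proposition.
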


Both these results follow from Theorems~\ref{t:main} and \ref{t:positive}, together with the orientation property of the East model. 
Namely, in the East process, the distribution on $\lbrace x,x+1,\ldots\rbrace$ depends only on the initial distribution on $\lbrace x,x+1,\ldots\rbrace$. This follows immediately from the graphical construction.

\begin{proof}[Proof of Proposition~\ref{prop:distzero}]
Let us first give an alternative definition of the process $(\xi_t)_{t\geq 0}$. No matter what the initial configuration is, $\xi_0=0$. Then if the process sits at $x$ at a given time, we wait for the first legal ring at $x$, at which time $\xi$ jumps to the right. It is easy to check that this process coincides with the distinguished zero if there is a zero at the origin in the initial configuration. The upside of this formulation is that it fits in our setting (where the environment is stationary and cannot have a $0$ at the origin almost surely). The $T_i^x$ are given by the Poisson clocks on each site and $g$ is given by $g(\eta_{T_i^x}(x),\eta_{T_i^x}(x+1),U_i^x)=1-\eta_{T_i^x}(x+1)$. Moreover, thanks to the orientation property of the East model mentioned immediately above, the law of $\xi$ depends only on the marginal distribution of the initial measure on $\mathbb{N}$. Therefore, we can apply Theorems~\ref{t:main} and \ref{t:positive} to get the result (condition~\eqref{e:one_jump} is easy to check).

\end{proof}

\begin{proof}[Proof of Proposition~\ref{prop:front}]
Similarly to the previous proof, we give an alternative definition of the front process $X_t$. 
No matter what the initial configuration is, $X_0=0$. 
Then if the process sits at $x$ at a given time, we wait for the first legal ring either at $x$ or $x-1$. 
If it happens at $x$ and the associated Bernoulli variable is $1$, we let $X$ jump one step to the right. 
If it happens at $x-1$ and the associated Bernoulli variable is $0$, we let $X$ jump one step to the left. 
Else $X$ remains at $x$ and we wait for a new legal ring. 
It is not difficult to check that this process coincides with the front process when the initial configuration is as in Proposition~\ref{prop:front}. 
The orientation property together with a simple adaptation (see Remark \ref{r:front} below) then allows us to apply the results of Theorems~\ref{t:main} and ~\ref{t:positive} to this problem.
\end{proof}

\begin{remark}
\label{r:front}
Strictly speaking, the process described in the proof of Proposition \ref{prop:front} does not exactly fit in our setting as described in Section \ref{ss:randomwalker} since the jumping times $T_i^x$ should now be given by the superposition of the clocks at $x$ and $x-1$ and the jumps depend on which of these two clocks has actually rung.
Although we could generalize our setting in Section \ref{ss:randomwalker} in order to accommodate for this more general situation, for the sake of simplicity, we prefer to leave the standard adaptations for the interested reader.
\end{remark}

\subsection{Independent renewal chains}
\label{ss:renewal}

Let us fix a sequence $(a_n)_{n \in \mathbb{N}}$ of positive real numbers satisfying
\begin{equation}
  0 < a_n \leq c e^{-\log^2 n}, \text{ for every $n > 0$}
\end{equation}
and consider the induced probability distribution $p_n =(1/Z) a_n$, where $Z$ is the appropriate normalization constant.
We now consider the state space $S = \mathbb{Z}_+$ and define a renewal chain with renewal times given by $p_n$.
More precisely, consider a Markov process on $S$ evolving according to the following generator
\begin{equation}
  Lf(n) = \begin{cases}
  f(n - 1) - f(n) & \text{ if }n>0\\
  \sum_{k>0}p_k(f(k)-f(0))&\text{ if }n=0.
  \end{cases}
\end{equation}
Intuitively speaking, when the chain is at some site $n > 0$, it jumps with unitary rate one to $n - 1$.
At zero it also jumps with rate one to a random integer $n > 0$ with probability proportional to $p_n$.

It is not difficult to see that this chain has stationary distribution given by
\begin{equation}
  q_n = \frac{1}{Z'} \sum_{j \geq n} a_n, \quad Z' = \sum_{n > 0} \sum_{j \geq n} a_n.
\end{equation}

For each site $x \in \mathbb{Z}$, we independently let $\eta_t(x)$ evolve as the above chain starting from the stationary measure, thus defining the dynamic random environment.
To see that this chain satisfies $\mathscr{D}_{\mathrm{env}}(\uc{c:decouple}, \alpha)$, we refer the reader to (3.47) of \cite{zbMATH06514478}.

Note that this chain is not uniformly mixing as observed in Remark~3.7 of \cite{zbMATH06514478}.

\section{Counterexample of an ergodic environment}
\label{s:example}

In this section we construct an example of random environment which is space-time ergodic, but such that very natural random walks can be constructed on top of it without obeying a LLN.
In our example all hypotheses of Theorem~\ref{t:main} will be satisfied except that the environment fails to fulfil the decoupling property $\mathscr{D}_{\mathrm{env}}(\uc{c:decouple}, \alpha)$ which is replaced by the weaker ergodicity property.
Therefore, $\mathbb{P}$ does not satisfy $\mathscr{D}(\uc{c:decouple}, \alpha)$.
This example should serve as a cautionary tale for the difficulties in going from ergodicity of the space-time environment to ergodicity of the environment as viewed from the random walker.

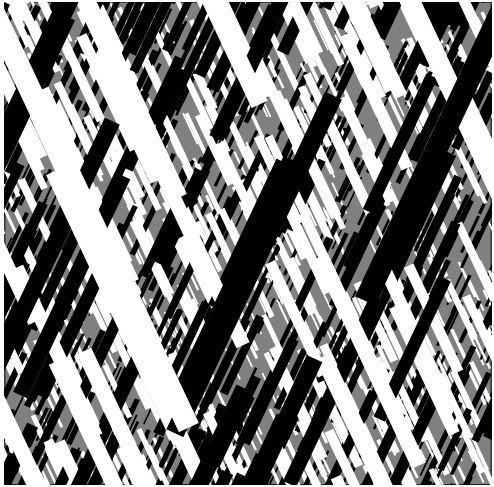
\begin{figure}[ht]
  \centering
  \begin{tikzpicture}[scale=2]
    \begin{scope}
      \clip (-.6, -.6) rectangle (2.6, 2.6);
      \iffinal
      \draw[fill=gray] (-.6, -.6) rectangle (2.6, 2.6);
      \foreach \z in {900,...,1}
      { \pgfmathrandominteger{\x}{0}{200};
        \pgfmathrandominteger{\y}{0}{200};
        \pgfmathrandominteger{\c}{1}{1000};
        \pgfmathrandominteger{\d}{0}{1};

        \pgfmathsetmacro{\comprimento}{.1 * exp(-.3* log2(.001 * \z))}%
        \pgfmathsetmacro{\largura}{-.01 *  ln(.001 * \z) }%

        \draw[color=black,fill=black,
              rotate around={{65}:({.02*\x-1}, {.02*\y-1})}]
                  ({.02*\x-1 - \comprimento}, {.02*\y-1 - \largura})
        rectangle ({.02*\x-1 + \comprimento}, {.02*\y-1 + \largura});
        \pgfmathrandominteger{\x}{0}{200};
        \pgfmathrandominteger{\y}{0}{200};
        \pgfmathrandominteger{\c}{1}{1000};
        \pgfmathrandominteger{\d}{0}{1};
        \pgfmathsetmacro{\comprimento}{.1 * exp(-.3* log2(.001 * \z))}%
        \pgfmathsetmacro{\largura}{-.01 *  ln(.001 * \z) }%

        \draw[color=white,fill=white,
              rotate around={{-65}:({.02*\x-1}, {.02*\y-1})}]
                  ({.02*\x-1 - \comprimento}, {.02*\y-1 - \largura})
        rectangle ({.02*\x-1 + \comprimento}, {.02*\y-1 + \largura});
      }
      \fi
    \end{scope}
  \end{tikzpicture}
  \caption{An illustration of the environment built in our counter example.}
   \label{f:confetti}
\end{figure}

The random environment we construct assigns the colors \emph{black}, \emph{white} or \emph{gray} to every point in the plane, see Figure~\ref{f:confetti}.
These colors will influence the local drift experienced by the random walker.
We will give a precise definition of the jump rates in a moment, but for now it is enough to know that black sites will induce a drift to the right, regions in white will create a drift to the left and gray areas will induce symmetric jumps fro the walker.

The construction of this environment is based on a colored continuum percolation, in which random obstacles (which are colored either black or white) are placed on top of each other.
Although this model resembles confetti percolation, there are differences that make us prefer to describe it as a colored continuum percolation.
The colored obstacles are going to be tilted rectangle with random side lengths, see Figure~\ref{f:confetti} for an illustration.

In this Poissonian soup of rectangles, black rectangles will be tilted towards the right and will induce a positive drift on the random walker.
On the contrary, white rectangles will be tilted to the left and induce a negative drift on the walker.
Finally, regions that are not covered by any rectangle will be declared gray.

Roughly speaking, when the random walk hits large monochromatic regions, it will experience a strong drift for a long time.
This behavior will ultimately result in linear fluctuations on the displacement of the random walker's trajectory, therefore breaking the Law of Large Numbers.

\vspace{3mm}

We start to make our construction precise by defining the sizes of these rectangles.
For this we introduce the following sequence of scales:
\begin{equation}
  \begin{array}{c}
    L_0 = 10^5, \qquad l_k = L_k^{1/5}, \qquad L_{k + 1} = l_k L_k, \text{ for integers $k \geq 0$}.
  \end{array}
\end{equation}
The above choices are somewhat arbitrary, but enough for the purpose of this section.

For each $k \geq 0$, we build a homogeneous Poisson process in $\mathbb{R}^2$ with density $L_k^{-2}$. 
The Poisson processes are assumed to be independent of one another for different values of $k$.
Given a scale $k$, we are going to decorate each of the points $y$ in the support of  corresponding Poisson Process with a rectangle as follows:
\begin{enumerate}
\item A rectangle corresponding to the point $y$ will be centered at $y$.
\item It will have length $L_k$ and width $\log^2(L_k)$.
\item It will be assigned independently colors \emph{black} or \emph{white} with equal probability.
\item The rectangle's longest axis will form an angle of $-30$ degrees with the vertical axis when the rectangle is black and of $30$ if it is white.
\item It will be assigned independently a uniform random variable in $[0, 1]$ called its height.
It will simply be used in order to break ties.
\end{enumerate}

We first calculate the probability that a certain rectangle resulting from the $k$-th Poisson Process touches the origin.
The number of rectangles touching the origin is a Poisson random variable with parameter bounded by
\begin{equation}
  c\, L_k \cdot \log^2(L_k)\cdot L_k^{-2},
\end{equation}
which is summable in $k$.
Therefore, almost surely, only finitely many scales can influence a given point.
Or in other words, almost surely each point is covered by finitely many rectangles.

We can now define the environment in which our random walker will evolve.
Points in the plane which are not covered by any rectangle are colored gray.
On the other hand if $x \in \mathbb{R}^2$ is covered by at least one rectangle, we color $x$ with the color of the largest rectangle that covers $x$.
In case there are various rectangles at the same scale that cover $x$ we break ties using their heights.

Having fully described the environment, we now define the law of the random walker that evolves on top of this environment.
The random walk $Y_t$ will always belong to $\mathbb{L} = \mathbb{Z} \times \mathbb{R}_+$ and will jump with rate one to a neighboring site with probabilities that depend on the color of the environment at the moment of jump:
\begin{enumerate}
\item If the environment at the moment of jump is gray, the jump is made symmetric,
\item If it is is black, the walker jumps with probability $0.9$ to the right and $0.1$ to the left.
\item If it is white, the walker jumps with probability $0.1$ to the right and  $0.9$ and to the left.
\end{enumerate}

\nc{c:touch_box}
We now want to show that the random walker exhibits linear fluctuations, ruling out a LLN.
This is done by showing that in times of order $L_k$ there is a positive probability that a unique rectangle of size $L_k$ crosses the way of the random walker, completely determining its direction.

We start by showing that there is a positive probability that a black rectangle of scale $k$ touches the sets $A = [-2 \epsilon L_k, \epsilon L_k] \times \{0\}$ and $B = \mathbb{R} \times \{1/2 L_k\}$.
In fact, there exists a positive constant $\uc{c:touch_box}$ such that
\begin{equation}
  \label{e:single_A_B}
  \begin{split}
    \mathbb{P} \Big[ & \text{a single rectangle at scale $k$ touches $A$ and $B$ and it is black} \Big]\\
    & \geq c \; \mathbb{P} \Big[ \text{a single rectangle at scale $k$ touches $A$} \Big]
    \geq \uc{c:touch_box}, \text{ for all $k \geq 0$}.
\end{split}
\end{equation}
Indeed, the number of such rectangles at scale $k$ is a Poisson random variable with parameter of order one: up to multiplicative constants it is $L_k^2 \cdot L_k^{-2} = 1$.

Now, for a fixed $\bar{k}$, the probability that a rectangle of a scale $k > \bar{k}$ touches $[0,L_{\bar{k}}]^2$ can be controlled by considering the area of the sumset of the rectangle $[0,L_{\bar{k}}]^2$ and $[0,L_k]\times [0,\log^2(L_k)]$ :
\begin{equation}
  \label{e:no_larger}
  \begin{split}
    \mathbb{P} \Big[ \text{some }
    & \text{rectangle from scale $k>\bar{k}$ touches $[0, L_{\bar{k}}$}]^2 \Big] \\
    & \leq c \sum_{k>\bar{k}} \big(L_{\bar{k}} +\log^2 (L_k) \big)(L_{\bar{k}} +L_k) L_k^{-2}
    \leq c \sum_{k>\bar{k}} \frac{L_{\bar{k}}}{L_k} + \frac{\log^2(L_k)}{L_k} \\
    & \leq c \sum_{k > \bar{k}} \frac{1}{l_{\bar{k}} \cdots l_{k - 1} } + c \sum_{k \geq \bar{k}} \frac{1}{\sqrt{L_k}}.
  \end{split}
\end{equation}
Note that the above converges to zero as $\bar{k}$ goes to infinity, meaning that as the scale $\bar{k}$ of a box grows there is a constant probability that it will intersect a single rectangle at scale $\bar{k}$ and no larger rectangles.

If the event in \eqref{e:single_A_B} occurs, but not the event in \eqref{e:no_larger}, then there is a positive probability that $X_{L_{\bar{k}}}$ has a displacement larger than $L_{\bar{k}}/10$.
By symmetry, there is also a positive probability that $X_{L_{\bar{k}}}$ is smaller than $-L_{\bar{k}}/10$.
This rules out the possibility that the random walker satisfies a Law of Large Numbers.

All that is left to complete our counter-example is to show that the above random environment is ergodic in space-time.

For this, we consider two boxes $B_1$ and $B_2$ within vertical distance $r$ and having side length $r^\alpha$.
We will show that for any two functions $f_1$ and $f_2$ with $\lVert f_i \rVert \leq 1$ and that only depend on what happens inside these two boxes, the covariance between them is bounded by something that goes to zero with $r$.
This statement implies that the environment is space-time mixing and therefore ergodic.

To prove the above covariance bound we will use a technique similar to what is done in \cite{Ahlberg2017} for boolean percolation.
Indeed, the proof of Proposition~2.2 of \cite{Ahlberg2017} shows that we can bound the covariance of $f_1$ and $f_2$ as follows:
\begin{equation}
  \Cov(f_1, f_2) \leq 4 \mathbb{P} \Big[ \text{some rectangle touches both $B_1$ and $B_2$} \Big].
\end{equation}
The above probability can be bound by summing over all scales of rectangles that are long enough to touch both boxes.
Let $\bar{k}$ be such that $L_{\bar{k} - 1} < r \leq L_{\bar{k}}$ and estimate:
\begin{equation}
  \begin{split}
    \Cov(f_1, f_2) & \leq c \sum_{k \geq \bar{k}} (L_k + r^\alpha) (\log^2(L_k) + r^\alpha) L_k^{-2}\\
    & \overset{r^\alpha \leq L_{\bar{k}}}\leq c \sum_{k \geq L_{\bar{k}}} \frac{\log^2(L_k)}{L_k} + c\sum_{k \geq L_{\bar{k}}} \frac{L_{\bar{k}}^\alpha}{L_k},
  \end{split}
\end{equation}
which clearly converges to zero as $\bar{k}$ goes to infinity.
This shows that the environment is mixing and therefore ergodic.

\begin{remark}
As one can inspect the above covariance estimate would not converge to zero if the boxes $B_1$ and $B_2$ were taken to be of size $r$ as in $\mathscr{D}_{env}(\uc{c:decouple}, \alpha)$.
\end{remark}

\begin{remark}
  It is currently an open question whether the LLN for the random walker follows by simply assuming that the covariance in Definition~\ref{d:envdecouple} goes to zero with~$r$.
\end{remark}

\section*{Acknowledgements}
This work has been supported by the ANR
projects LSD (ANR-15-CE40-0020) and MALIN (ANR-16-CE93-0003), and by the LABEX MILYON (ANR-10-LABX-0070) of Universit\'e de
Lyon, within the program ``Investissements  d'Avenir" (ANR-11-IDEX-0007) operated by
the French National Research Agency (ANR).
MH was supported by CNPq grants ``Projeto Universal" (307880/2017-6) and ``Produtividade em Pesquisa" (406659/2016-8) and by FAPEMIG grant ``Projeto Universal" (APQ-02971-17).
AT was supported by grants ``Projeto Universal'' (406250/2016-2) and ``Produtividade em Pesquisa'' (309356/2015-6) from CNPq and ``Jovem Cientista do Nosso Estado'', (202.231/2015) from FAPERJ.
Part of this collaboration took place in CIB (Lausanne), MH and AT thank the staff for the support and hospitality. OB and AT visited The University of Geneva where MH was a long-term research fellow during the beginning of this collaboration. OB and MH acknowledge IMPA for its support and hospitality during several visits.

\bibliographystyle{alpha}
\bibliography{all}

\end{document}